\newtheorem{theorem}{Theorem}[section]
\newtheorem{lemm}[theorem]{Lemma}
\newtheorem{prop}[theorem]{Proposition}
\newtheorem{theo}[theorem]{Theorem}
\theoremstyle{definition}
\newtheorem{defi}[theorem]{Definition}
\newtheorem{coro}[theorem]{Corollary}
\theoremstyle{remark}
\newtheorem{remark}[theorem]{Remark}
\numberwithin{equation}{section}
\newcommand{\diag}{\mbox{diag}}
\begin{document}

\title[Quantum $N$-toroidal algebras and quantized GIM algebras]
{Quantum $N$-toroidal algebras and extended quantized GIM algebras of $N$-fold affinization\\
\vspace{12pt}{\itshape\small Dedicated to R.V. Moody on the occasion of his 80th birthday}}

%%% ----------------------------------------------------------------------
\author[Gao]{Yun Gao}
\address{Department of Mathematics and Statistics,
   York University, Toronto, ON M3J 1P3,
   Canada}%\\
\email{ygao@yorku.ca}

\author[Jing]{Naihuan Jing}
\address{Department of Mathematics,
   North Carolina State University,
   Ra\-leigh, NC 27695, USA}%\\
\email{jing@ncsu.edu}

\author[Xia]{Limeng Xia}
\address{Institute of Applied System Analysis, Jiangsu University,
Zhenjiang, Jiangsu 212013, China} \email{xialimeng@ujs.edu.cn}

\author[Zhang]{Honglian Zhang}
\address{Department of Mathematics, Shanghai University,
Shanghai 200444, China} \email{hlzhangmath@shu.edu.cn}

%\thanks{$^\star$ H.Zhang, Corresponding Author}

%    General info
\subjclass[2010]{17B37, 17B67}
%\date{Version on April. 25, 2018}

\keywords{Generalized intersection matrix, quantized GIM algebra, quantum $2$-toroidal algebra, quantum $N$-toroidal algebra. }
%%%%%%%%%%%%%%%%%%%%%%%%%%%%%%%%%%%%%%%%%%%%%%%%%%%%%%%%%%%%%%%%%%%%%%%%
%\footnote{Corresponding author.}%
%%%%%%%%%%%%%%%%%%%%%%%%%%%%%%%%%%%%%%%%%%%%%%%%%%%%%%%%%%%%%%%%%%%%%%%%
\begin{abstract}
We introduce the notion of
quantum $N$-toroidal algebras %uniformly
as natural generalization of the quantum toroidal algebras as well as extended quantized GIM algebras of $N$-fold affinization. We show that the quantum $N$-toroidal algebras are quotients of the extended quantized GIM algebras of $N$-fold affinization, which generalizes a well-known result of Berman and Moody for Lie algebras.
%Moreover, we construct a level-one vertex representation of the quantum $N$-toroidal algebra for type $A$.

\end{abstract}
\maketitle

\section{ Introduction}

One of the most important generalizations of the finite dimensional complex Lie algebra ${\mathfrak g}$ is the (untwisted) affine Lie algebra $\widehat{\mathfrak g}$ (without derivation), the central extension of ${\mathfrak g}\otimes \mathbb C[t_0^{\pm1}]$ by the one-dimensional center $\mathbb{C}c_0$.
 The $N$-toroidal Lie algebra ${\mathfrak g}_{N,tor}$ is a further
 generalization and the infinite dimensional universal central extension of $\mathfrak g\otimes\mathbb C[t_0^{\pm1},\cdots, t_{N-1}^{\pm1}]$ (c.f. \cite{RM} etc.). The algebra
 ${\mathfrak g}_{2,tor}={\mathfrak g}_{tor}$ is usually called the toroidal Lie algebra or simply referred to as the double affine Lie algebra.
   The $N$-toroidal Lie algebra ${\mathfrak g}_{N,tor}$ has close connections with algebraic geometry, finite groups, conformal field theory, vertex algebras, Yangians, and differential equations and so on, and there are extensive works on the general toroidal Lie algebra (c.f. \cite{ABFP}).

The quantum toroidal algebra $U_q({\mathfrak g}_{tor})$=$U_q({\mathfrak g}_{2,tor}$) in type A was introduced by  Ginzburg, Kapranov, and Vasserot
\cite{GKV} in connection with geometric realization
and Langlands reciprocity for algebraic surfaces. Besides the realization of Hecke operators for vector bundles on algebraic surfaces,
Varagnolo and Vasserot \cite{VV1} proved a Schur-Weyl duality between
representations of the quantum toroidal algebras  $U_q({\mathfrak g}_{tor})$ and elliptic Cherednik algebras. Vertex representations of the
quantum toroidal algebras in ADE types were also realized via the McKay correspondence \cite{FJW}.
In a series of papers \cite{M1}-\cite{M6}, Miki studied the structures and representations of the quantum toroidal algebra $U_q({\mathfrak g}_{tor})$ exclusively in type A.
 In \cite{GJ}, the authors constructed explicitly an irreducible vertex representation of the quantum toroidal algebra $U_q({\mathfrak g}_{tor})$ of type $A$ on the basic module for the affine Lie algebra $\hat{\mathfrak{gl}}_N$.
In the review \cite{H2}, the representation theory of general
quantum toroidal algebras $U_q({\mathfrak g}_{tor})$ was understood as quantum affinizations (see also \cite{J2}).
%Actually, there have been a lot of
Numerous important works on the quantum toroidal algebras and quantum affinizations were carried out in \cite{STU}, \cite{Sy}, \cite{H1}, \cite{H2}, \cite{VV2}, \cite{Na}, \cite{Na2}, \cite{FJM1}, \cite{FJM2}, \cite{GTL}, \cite{GM}, \cite{GNW} etc. %\cite{K}
Note that most of these works studied the structures and representations of the quantum toroidal algebra in type $A$, which
further admits a two-parameter deformation $U_{q, \kappa}(\mathfrak{g}_{tor})$.
Despite of all these, it is still far %away
from complete understanding of the quantum toroidal algebras in type A, and even less is known for
the representation theory of the quantum toroidal algebras in other types.

In \cite{BM}, the authors introduced the generalized intersection matrix (GIM) of N-fold affinization. The %generalized intersection matrix
 GIM algebra is defined by Chevalley generators subject to
Serre-like relations defined by %associated with
the GIM matrix, which is similar to the Cartan matrix but with (possible) positive off-diagonal entries %to be positive
(c.f. \cite{Sl,Sk}). The N-toroidal Lie algebras were then proved to be quotient algebras of the GIM algebra of N-fold affinization (c.f. Proposition 4.15 in \cite{BM}).
The quantum GIM algebra was introduced \cite{K} based on its relation with the
2-fold affinization found in \cite{T1, T2, LT}, however, it is still mysterious in general on its relation with a possible quantum N-toroidal algebra.
%as far as we know,
%but quantized GIM algebras even for the simply-laced cases.
%(but see \cite{K}).
%also \cite{T1, T2, LT}, the structures of quantized GIM algebras for simply-laced cases of 2-fold affinization were studied.
Furthermore, we notice that the quantized GIM algebras for simply-laced cases are isomorphic to subalgebras of  the quantum universal enveloping algebras \cite{GHX} .
%much is needed to uncover possible applications
%with other algebraic structures as in the classical situation.

This leads to an important question on how to
generalize the quantum toroidal algebra $U_q({\mathfrak g}_{tor})$ to the  quantum N-toroidal algebra $U_q({\mathfrak g}_{N,tor})$ for
general $N$
and uncover their relations with other important algebraic structures such as quantum GIM algebras. %to a general algebraic structure $U_q({\mathfrak g}_{N,tor})$
%for arbitrary integer $N>2$, that may %which can
%be called the quantum $N$-toroidal algebra.
In the present paper, we introduce the notion of
quantum $N$-toroidal algebras for all types uniformly as natural generalization of the usual quantum toroidal algebra,
just like the relation between $2$-toroidal Lie algebras and $N$-toroidal Lie algebras.
 We find out that the novel quantum $N$-toroidal algebras are closely related to general extended quantized GIM algebras of $N$-fold affinization by using some simplified Drinfeld-type generators.

 In \cite{JZ1} and \cite{JZ2}, we formulated a simplified set of Drinfeld generators for the quantum affine algebras and quantum toroidal algebras in type $A$, respectively, to simplify
 practical computations.
In the same way, the quantum $N$-toroidal algebra can be realized as a quotient algebra of certain quantum algebra generated by simplified generators.
Interestingly, this formulation leads to an identification of  %identification turns out that
the quantum $N$-toroidal algebra as a quotient algebra of the extended quantized GIM algebra of $N$-affinization, which is %this final result
consistent with the case of Lie algebras \cite{BM}. The corresponding GIMs of N-fold affinization, as well as the  Dynkin diagrams for $N=3$ of the subalgebras were given case by case.
% delete remark on vertex representations.
Furthermore we can realize our new algebras as
certain subalgebras of the quantum toroidal algebras, thus showing that
our new algebras have nontrivial (vertex) representations. %Further confirming our algebras are nontrivial, %In particular,
We remark that in type $A$ the vertex representation can also be constructed using the
two-parameter deformation $U_{q,\kappa}(\mathfrak{g}_{N, tor})$, and the latter algebra is a generalization of the quantum $2$-toroidal algebra for type $A$ given in \cite{Sy}.

On the other hand, we note that the quantum GIM algebras carry nontrivial finite dimensional representations \cite{X}, while it is known that the quantum toroidal algebras don't have
such representations unless the centers are trivial. This shows that the newly defined quantum $N$-toroidal algebras may help with further investigation on their categorification.

%for our purpose.
In section
3, we define the quantum $N$-toroidal algebra for all types uniformly. At the same time, we find a subset of Drinfeld generators for the quantum $N$-toroidal algebra. It is shown %interesting
that the algebra generated by this subset can be realized as the quotient of the extended quantized GIM algebra of $N$-fold affinization.
It turns out that the quantum $2$-toroidal algebra %for $N=2$
is isomorphic to a quotient algebra of the algebra for type $A$ and equals to the algebra for other types.  %While
In general the quantum $N$-toroidal algebra for $N\geq 2$	 is isomorphic to the quotient algebra of the algebra.  This main result will be verified in the next two sections for $N=2$ and $N\geq3$, respectively, which implies that the quantum $N$-toroidal algebra
is isomorphic to the quotient algebra of the extended quantized GIM algebra of $N$-affinization.
In section 6, a vertex realization of the quantum $N$-toroidal algebra is given.
 In the Appendix, we list the Dynkin diagrams of the GIMs of $N$-fold affinization case by case for $N=2$ and $N=3$ for example.
%We remark that the vertex representation of the quantum $2$-toroidal algebra for type $A$ given in \cite{Sy}
%is a special case of our construction at $N=2$.

\section{Extended quantized GIM algebras of $N$-fold affinization}
In this section, we first
recall the definition of a generalized intersection matrix (GIM for short)\, (c.f. \cite{Ne},\cite{G})
and then give a general definition of extended quantized GIM algebras of $N$-fold affinization
(cf. \cite{K}).

%The general notion of quantum GIM algebras was defined in \cite{K}.
%As far as we know, there are limited knowledge on  quantized GIM algebras even for simply-laced cases.
%Earlier in \cite{T1, T2, LT}, the authors studied the structures of quantized GIM algebras for simply-laced cases of 2-fold affinization.  In \cite{GHX}, the authors proved that the quantized GIM
%	algebra $U_{q}(\mathcal{L})$ for a simply-laced case is isomorphic to a subalgebra of a quantum universal enveloping algebra $U_q(A)$.
%At the end of this section, we give a general definition of the extended quantized GIM algebra of $N$-fold affinization.

\begin{defi}  Let $J$ be a finite index set, a square matrix $M=(m_{ij})_{i, \, j\in
J}$ over $\mathbb{Z}$ is called a generalized intersection matrix if  it  satisfies:

(C1) \qquad\quad $m_{ii}=2$  for $~i\in J;$

(C2) \qquad\quad $m_{ij}\cdot m_{ji} $ are nonnegative  integers for ~$i\neq j$;

(C3) \qquad\quad $m_{ij}=0$  implies ~$m_{ji}=0$.
\end{defi}\begin{remark} As $m_{ij}$ can be positive, the notion of GIM generalizes that of a generalized Cartan matrix.
\end{remark}

%Note that a symmetrizable GIM is called an intersection matrix (IM for short) in \cite{BM}.
In this paper, we only consider the symmetrizable intersection matrix (IM) $M=(m_{ij})_{i, \, j\in
J}$, i.e. there exists an integral diagonal matrix $D$ such that $DM$ is symmetric. We fix the notation $D=\diag(d_i\in\mathbb{Z}_+|i\in J)$.
%Namely, there exists a diagonal matrix $D=diag(d_i\in\mathbb{Z}_+|i\in J)$  such that $DM$ is symmetric.

The GIM algebras were introduced by P. Slodowy as  generalization of the Kac-Moody Lie algebras \cite{Sl}.
Similar to the latter, %Kac-Moody algebra,
a GIM algebra $\mathcal{L}(M)$ associated to a GIM  $M=(m_{ij})$  %_{i, \, j\inJ}$
can be defined by generators and relations (c.f. \cite{BM}).

 \begin{defi}  The GIM algebra $\mathcal{L}(M)$ associated to a GIM  $M=(m_{ij})_{i, \, j\in
J}$  is the Lie algebra  over $\mathbb{C}$ generated by $e_i, f_i, h_i$ for $i\in J$ satisfying the following relations,

 \noindent $(R1)$ \ For $\,i, j\in J$,
\begin{equation*}
\begin{array}{ll}
&[h_i, e_j]=m_{ij}e_j,\qquad\quad
[h_i, f_j]=-m_{ij}f_j,\qquad \quad [e_i, f_i]=h_i.
\end{array}
\end{equation*}

\noindent $(R2)$ \ For $m_{ij}\leqslant 0$,
\begin{equation*}
\begin{array}{ll}
&[e_i, f_j]=0=[f_i, e_j], \qquad
(ade_i)^{-m_{ij}+1}e_j=0=(adf_i)^{-m_{ij}+1}f_j.
\end{array}
\end{equation*}

\noindent $(R3)$ \ For $m_{ij}>0$ and $i\neq j$,
\begin{equation*}
\begin{array}{ll}
&[e_i, e_j]=0=[f_i, f_j],\qquad
(ade_i)^{m_{ij}+1}f_j=0=(adf_i)^{m_{ij}+1}e_j.
\end{array}
\end{equation*}
 \end{defi}

Let $I_0=\{1, 2, \cdots, n\}$ and $\tilde{J}=\{-N+1,\cdots, -1, 0, 1, \cdots, n\}$.

\begin{defi}\label{defi3.10} \,Let $A=(a_{ij})_{i, j\in I_0}$ be a Cartan matrix of finite type. Define
$$M=(m_{ij})_{i,j\in \tilde{J}}=\left(\begin{array}{lr}
T & P\\
Q & A
\end{array}\right),$$
where $T$ is the $N\times N$ matrix $\sum\limits_{i,j}2E_{ij}$, and $P=(p_{ij})$ (resp. $Q=(q_{ij})$) is the $N\times n$ (resp. $n\times N$) matrix given by $p_{ij}=a_{0j}$ (resp. $q_{ij}=a_{i0}$).
%Here $J_1=\{-N+1, \cdots, -1, 0\}$ and $J_2=\{1, 2, \cdots, n\}$.
\end{defi}

%According to Def. 4.11 in \cite{BM}, $M$ is called an $N$-fold affinization of $A$, it is clear that it is a GIM. For our purpose, we only work specifically with GIMs of $N$-fold affinization of Cartan matrix $A$ in this paper.

\begin{remark}\,
Note that $M$ is an $N$-fold affinization of $A$, and is exactly the GIM introduced in \cite{BM} after reordering the index.
\end{remark}

In \cite{T1, T2, LT, GHX}, the authors studied the quantized GIM algebras for simply-laced cases associated to a GIM of 2-fold affinization. We will study
a more general algebraic structure, namely the extended quantized GIM algebra $U_{q}(\mathcal{L}(M))$  associated to a GIM of $N$-fold affinization $M$ for any finite simple type, which will be called the extended quantized GIM algebra of $N$-fold affinization for simplicity. Here  ``extended'' refers to adding a derivation to the algebra. %in the generators.
Let $\tilde{J}=J_1\cup J_2$ be a disjoint decomposition with $card(J_1)=N$ and $\mathbb{K}=\mathbb{C}(q)$.

\begin{defi}  The extended quantized  GIM algebra
$U_{q}(\mathcal{L}(M))$  of  $N$-fold affinization is the unital associative
algebra over $\mathbb{K}$ generated by the elements $E_i,\, F_i,\,
K_i^{\pm 1}, q^{\pm d}  (i\in \tilde{J})$,
satisfying the following relations:

\noindent $(M1)$ \ For $\,i, j\in \tilde{J}$,
$K_i^{\pm1}\,K_i^{\mp1}=1$,  $q^{\pm d}$ and $K_j^{\pm1}$ commute with each other.\\
\noindent$(M2)$ \ For $\,i\in J_1$ and $j\in J_2$,
\begin{equation*}
\begin{array}{ll}
&q^d\,E_i\,q^{\,-d}=q\,E_i,\qquad\quad
q^d\,F_i\,q^{\,-d}=q^{-1}\,F_i,\\
&q^d\,E_j\,q^{\,-d}=\,E_j,\qquad\quad
q^d\,F_j\,q^{\,-d}=\,F_j.\\
\end{array}
\end{equation*}\\

\noindent$(M3)$ \ For $\,i\in \tilde{J}$ and $j\in \tilde{J}$,
\begin{equation*}
\begin{array}{ll}
&K_i\,E_j\,K_i^{\,-1}=q_i^{m_{ij}}\,E_j,\qquad\quad
K_i\,F_j\,K_i^{\,-1}=q_i^{-m_{ij}}\,F_j.
\end{array}
\end{equation*}\\
$(M4)$ \ For $\,i\in \tilde{J}$, we have that
 $$[\,E_i, F_i\,]=\frac{K_i-K_i^{-1}}{q_i-q_i^{-1}}.$$
$(M5)$ \ For $m_{ij}<0$, we have that
\begin{equation*}
\begin{array}{ll}
&[\,E_i, F_j\,]=0,\vspace{6pt}\\
&\sum\limits_{s=0}^{1-m_{ij}}(-1)^{s}\Big[{1-m_{ij}\atop
s}\Big]_i
E_{i}^{1-m_{ij}-s}E_jE_{i}^{s}=0, \vspace{6pt}\\
&\sum\limits_{s=0}^{1-m_{ij}}(-1)^{s}\Big[{1-m_{ij}\atop
s}\Big]_i
F_{i}^{1-m_{ij}-s}F_jF_{i}^{s}=0.
\end{array}
\end{equation*}
$(M6)$ \ For $m_{ij}>0$ and $i\neq j$, we have that
\begin{equation*}
\begin{array}{ll}
&[\,E_i, E_j\,]=0=[F_i, F_j],\vspace{6pt}\\
&\sum\limits_{s=0}^{1+m_{ij}}(-1)^{s}\Big[{1+m_{ij}\atop
s}\Big]_i
E_{i}^{1+m_{ij}-s}F_jE_{i}^{s}=0, \vspace{6pt}\\
&\sum\limits_{s=0}^{1+m_{ij}}(-1)^{s}\Big[{1+m_{ij}\atop
s}\Big]_i
F_{i}^{1+m_{ij}-s}E_jF_{i}^{s}=0.
\end{array}
\end{equation*}
$(M7)$\ For $m_{ij}=0$ and $i \neq j$, we have that
\begin{gather*}
[\,E_i, E_j\,]=0=[E_i, F_j]=[F_i, F_j],
\end{gather*}
where $q_i=q^{d_i}$, $[m]_{i}=\frac{q_i^{m}-q_i^{-m}}{q_i-q_i^{-1}}$, $[m]_{i}!=[m]_{i}\cdots[2]_{i}[1]_{i}$,
$\Bigl[{m\atop n}\Bigr]_{i}=\frac{[m]_{i}!}{[n]_{i}![m-n]_{i}!}$.
\end{defi}

\section{Quantum $N$-toroidal algebras $U_{q}$(${\mathfrak g}_{N,tor}$) $(N\geq 2)$}

\subsection{Quantum $q$-bracket}\, We recall the quantum $q$-brackets for later use (c.f. \cite{J1}).  For $v_i\in \mathbb{K}\backslash \{0\} (i=1,\cdots, s-1)$,
the quantum $q$-bracket $[\,a_1, a_2,\cdots,
a_s\,]_{(v_1, \cdots,\, v_{s-1})}$ is defined
inductively by
\begin{gather*}    [\,a_1, a_2\,]_{v_1}=a_1a_2-v_1\,a_2a_1,\\
[\,a_1, a_2, \cdots, a_s\,]_{(v_1,\,v_2,\,\cdots,
	\,v_{s-1})}=[\,a_1, \cdots, [a_{s-1},
a_s\,]_{v_1}]_{(v_2,\,\cdots,\,v_{s-1})}.
\end{gather*}
It follows immediately that
\begin{eqnarray}
&& [\,a,[\,b,c\,]_u\,]_v=[\,[\,a,b\,]_q,
c\,]_{\frac{uv}q}+q\,[\,b,[\,a,c\,] _{\frac{v}q}\,]_{\frac{u}q},\label{v1}\\
&&[\,[\,a,b\,]_u,c\,]_v=[\,a,[\,b,c\,]_q\,]_{\frac{uv}q}+q\,[\,[\,a,c\,]
_{\frac{v}q},b\,]_{\frac{u}q}.\label{v2}
\end{eqnarray}

\subsection{Quantum $N$-toroidal algebra $ U_{q}(\mathfrak{g}_{N, tor})$ via generating functions}
\, The quantum toroidal algebra $U_{q, \kappa}(\mathfrak{sl}_{n+1},{tor})$ for type A  was introduced  in \cite{GKV} as a two-parameter deformation. It admits the Schur-Weyl duality \cite{VV1}.  In \cite{H2},  the quantum toroidal algebras $U_{q}(\mathfrak{g}_{tor})$ for general types were introduced as Drinfeld quantum affinizations.
We will define the quantum $N$-toroidal algebra uniformly through the process of Drinfeld-like quantum $N$-affinization. In particular, the new algebra
is a natural generalization of
%In view of this, the quantum $N$-toroidal algebras $ U_q(\mathfrak{g}_{N, tor})$ defined below are certain natural generalization of
the quantum toroidal algebras $U_{q}(\mathfrak{g}_{tor})$ (see \cite{H2} etc.).

Let $I=\{0, 1, \cdots, n\}$ and $I_0=\{1, \cdots, n\}$. Set $\mathfrak{g}$ a complex simple Lie algebra of rank $n$,  $\alpha_i ~(i\in I_0)$ the simple roots of  $\mathfrak{g}$ and $\hat {\mathfrak{g}}$
the non-twisted affine Lie algebra associated to $\mathfrak{g}$. Let  $A=(a_{ij})_{i,j\in I_0}$ be the Cartan matrix of $\mathfrak{g}$ and $\mathfrak{h}$ the Cartan subalgebra.
Let $\delta$ denote the primitive imaginary root  of the affine Lie algebra $\hat {\mathfrak{g}}$ and $\theta$ the highest root of $\mathfrak{g}$, take $\alpha_0=\delta-\theta$,  then $\Pi=\{\alpha_i\mid i\in I\}$ is a basis of simple roots of the affine Lie algebra $\hat{\mathfrak {g}}$.
%We denote the $N$-toroidal root system of ${\mathfrak {g}}_{N-tor}$ by ${\Delta}_{tor}=\hat{\Delta}\cup\{0\}+\mathbb{Z}\delta_1+\cdots+\mathbb{Z}\delta_{N-1}$, where $\hat{\Delta}$ is the root system of affine Lie algbra $\hat{\mathfrak {g}}$.

 Let  $\hat{A}=(a_{ij})_{i, j\in I} $ be the generalized Cartan matrix
of the affine Lie algebra $\hat{\mathfrak{g}}$, and $\hat{\mathfrak{h}}$ the Cartan subalgebra of $\hat{\mathfrak{g}}$. There exists a diagonal matrix $D=\diag(d_i|i\in I)$  %with $d_i\in \mathbb{Z}_+$
such that $D\hat{A}$ is symmetric. The non-degenerate symmetric bilinear form $(\hspace{0.4cm}|\hspace{0.4cm})$ on $\hat{\mathfrak{h}}^*$ satisfies  for all $i,j\in I$,
\begin{eqnarray*}
(\alpha_i|\alpha_j)=d_ia_{ij},\quad (\delta|\alpha_i)=(\delta|\delta)=0,
\end{eqnarray*}
where $\hat{\mathfrak{h}}^*$ denotes the dual Cartan subalgebra of $\hat{\mathfrak{g}}$.

Let $q_i=q^{d_i}$,
 $J=\{1, \cdots {N-1}\}$, $\underline{k}=(k_1, k_2, \cdots, k_{N-1})\in \mathbb{Z}^{N-1}$, and $e_s=(0, \cdots, 0, 1, 0,$ $\cdots, 0)$ the $s$th  standard unit   vector of $(N-1)$-dimension. We also denote by $\underline{{0}}$ the $(N-1)$-dimensional zero vector.

\begin{defi}\label{defi-ntor}
	The  quantum $N$-toroidal algebra
	${U}_{q}$(${\mathfrak g}_{N,tor}$) is the associative algebra over $\mathbb{K}$
	generated by $x_{i}^{\pm}(\underline{k})$,  $a_i^{(s)}(\ell)$, $K_i^{\pm1}$,
	$\gamma_{s}^{\pm\frac{1}{2}}$,
	$q^{\pm d}$,  $( i\in I, s\in J$,
	${k}\in \mathbb{Z}$, $\ell \in \mathbb{Z}\backslash
	\{0\})$ satisfying the following relations,
	\begin{align}\label{n:Dr1}
	&\gamma_{s}^{\pm\frac{1}{2}}~~
	\textrm{are central such that} \gamma_s^{\frac{1}{2}}\gamma_s^{-\frac{1}{2}}=1~~ \textrm{and}~~ K_i^{\pm 1}\,K_i^{\mp 1}=1=q^{\pm d}q^{\mp d}, %\textrm{\,$ q^d, K_i^{\pm}$ commute with each other},% (i\in I_1\cup \{0\}, j\in I_2),
	\\ \label{n:Dr2}
	&[\,a_i^{(s)}(\ell),~K_{j}^{{\pm }1}\,]=0=[K_j^{\pm1}, q^{\pm d}]=[a_i^{(s)}(\ell), q^{\pm d}],
	\\ \label{n:Dr3}
	&[\,a_i^{(s)}(\ell),a_{j}^{(s')}(\ell')\,]
	=\delta_{\ell+\ell',0}\frac{
		[\,\ell
		a_{i{j}}\,]_i}{\ell}
	\cdot\frac{(\gamma_s\gamma_{s'})^{\frac{\ell}{2}}-(\gamma_s\gamma_{s'})^{-\frac{\ell}{2}}}{q_j-q_j^{-1}}%\kappa^{\ell m_{ij}}
	,\\  %\leqno(\textrm{D3})
	\label{n:Dr4}
	& q^{d}\,x_{i}^{\pm}(\underline{k})\,q^{-d}=q^{\pm \delta_{i,0}}\, x_{i}^{\pm}(\underline{k}),\qquad
	%D_{j}\, x_0^{\pm}(0)\, D_{j}^{-1}=q^{\pm 1}x_0^{\pm}(0),
	\\\label{n:Dr4-1}
	&K_i\,x_{j}^{\pm}(\underline{k})\, K_i^{-1} = q_i^{\pm a_{i{j}}}x_{j}^{\pm}(\underline{k}), %\qquad K_i\,x_{j}^{\pm}(\ell)\, K_i^{-1} = q_i^{\pm a_{i{j}}}x_{j}^{\pm}(\ell)
	\\ \label{n:Dr5}
	&[\,a_i^{(s)}(\ell),x_{j}^{\pm}(\underline{k})\,]=\pm \frac{ [\ell a_{i{j}}]_i}{\ell} \gamma_{s}^{\mp\frac{|\ell|}2}x_{j}^{\pm}(\underline{k}+\ell e_{s}), \qquad
	%[\,a_i(\ell),x_{j}^{\pm}(\ell')\,]=\pm \frac{ [\ell a_{ij}]_i}{\ell} \gamma^{\mp\frac{\ell}2}x_j^{\pm}(\ell{+}{\ell'})
	\\ \label{n:Dr6}
%&[\,a_i^{(s)}(\ell),x_{j}^{\pm}({k e_{s'}})\,]_{q_i^{\pm sgn(\ell)a_{ij}}}=0, ~~~\quad \hbox{for}~~~~\quad s\neq s' \\\label{n:Dr7-1}
&[x_i^{\pm}(ke_s),x_{i}^{\pm}(ke_{s'})\,]=0, \\\label{n:Dr7}
&[x_i^{\pm}((t+1)e_s),x_{j}^{\pm}(t'e_{s})\,]_{q_i^{\pm a_{i{j}}}}+[x_{j}^{\pm}(({t'}+1)e_{s}),x_i^{\pm}(te_s)\,]_{q_i^{\pm a_{i{j}}}}=0,\\\label{n:Dr8}
	%&[x_i^{\pm}(k+1),x_{j}^{\pm}(\ell)\,]_{q_i^{\pm a_{i{j}}}}=-[x_{j}^{\pm}({\ell}+1),x_i^{\pm}(k)\,]_{q_i^{\pm a_{i{j}}}}\\
%&[\,x_i^{+}(\underline{0}),~x_{j}^-(t'e_{s'})\,]=\frac{\delta_{i{j}}}{q_i-q_i^{-1}}\Big({\gamma_{s'}}^{-\frac{t'}{2}}\,
%	\phi_i^{(s')}(t')-\gamma_{s'}^{\frac{t'}{2}}\,\varphi_i^{(s')}(t')\Big),	\\\label{n:Dr8-1}
%&[\,x_i^{+}(te_s),~x_{j}^-(\underline{0})\,]=\frac{\delta_{i{j}}}{q_i-q_i^{-1}}\Big({\gamma_s}^{\frac{t}{2}}\,
%	\phi_i^{(s)}(t)-\gamma_s^{-\frac{t}{2}}\,\varphi_i^{(s)}(t)\Big),\\\label{n:Dr8-2}
&[\,x_i^{+}(te_s),~x_{j}^-(t'e_s)\,]=\frac{\delta_{i{j}}}{q_i-q_i^{-1}}\Big({\gamma_s}^{\frac{t-t'}{2}}\,
	\phi_i^{(s)}(t{+}t')-\gamma_s^{\frac{t'-t}{2}}\,\varphi_i^{(s)}(t{+}t')\Big),
	\end{align}
	where $\phi_i^{(s)}(t)$, $\varphi_i^{(s)}(-t)~(t\in \mathbb{Z}_{\geq 0})$ such that
	{$\phi_i^{(s)}(0)=K_i$,  $\varphi_i^{(s)}(0)=K_i^{-1}$} are defined as below:
	\begin{gather*}\label{n:Dr9}\sum\limits_{m=0}^{\infty}\phi_i^{(s)}(m) z^{-m}=K_i \exp \Big(
	(q_i{-}q_i^{-1})\sum\limits_{\ell=1}^{\infty}
	a_i^{(s)}(\ell)z^{-\ell}\Big),\\ \label{n:Dr10}
	\sum\limits_{m=0}^{\infty}\varphi_i^{(s)}(-m) z^{m}=K_i^{-1} \exp
	\Big({-}(q_i{-}q_i^{-1})
	\sum\limits_{\ell=1}^{\infty}a_i^{(s)}(-\ell)z^{\ell}\Big),
	\end{gather*}
	\begin{align}\label{n:Dr11}
%&[x_i^{\pm}(\underline{m}), x_i^{\pm}(\underline{n})]=0,\\
		& \rm{Sym}_{{m_1},\cdots
		{m_{n}}}\sum_{k=0}^{n=1-a_{i{j}}}(-1)^k
	\Big[{n\atop  k}\Big]_{i}x_i^{\pm}({m_1e_s})\cdots x_i^{\pm}({m_ke_s}) x_{j}^{\pm}({\ell e_s})\\ \nonumber
	&\hskip1.8cm \times x_i^{\pm}({m_{k+1}e_s})\cdots x_i^{\pm}({m_{n}e_s})=0,
	\quad   i\neq j~~~~ \hbox{and}~~~~ s\in J,
\\ \label{n:Dr12}
	&\sum_{k=0}^{3}(-1)^k
	\Big[{3\atop  k}\Big]_{i}x_i^{\pm}({m_1 e_s})\cdots x_i^{\pm}({m_{3-k}e_s}) x_{i}^{\mp}({\ell e_{s'}})\\ \nonumber
	&\hskip1.2cm \times x_i^{\pm}({m_{4-k}e_s})\cdots x_i^{\pm}({m_{k}e_s})=0,
	~~\hbox{for}~~  i\in I~~ \hbox{and}~~m_1m_2m_3\ell\neq 0,\, s\neq s'\in J,
	\end{align}
	where  %$[m]_{i}=\frac{q_i^{m}-q_i^{-m}}{q_i-q_i^{-1}}$, $[m]_{i}!=[m]_{i}\cdots[2]_{i}[1]_{i}$,
%	$\Bigl[{m\atop n}\Bigr]_{i}=\frac{[m]_{i}!}{[n]_{i}![m-n]_{i}!}$, and
	$\rm{Sym}_{{m_1},\cdots,
		{m_{n}}}$  denotes the symmetrization with respect to the indices $({m_1},\cdots,
		{m_{n}})$.
\end{defi}

\begin{remark}\,
	When $N=2$, Definition 3.1 is just that of the quantum toroidal algebra (\cite{H2} etc.).
%When $N\geqslant3$, the quantum $N$-toroidal algebra  is a natural generalization of the quantum toroidal algebra.
Note that for the case of $\hat{\mathfrak{sl}}_{n+1}$,  an additional parameter can be added in the definition of the quantum toroidal algebra \cite{GKV}.
\end{remark}
\begin{remark}\,
	For each fixed $s\in J$, let %there exists a subalgebra
$U_q^{(s)}$  be the subalgebra of ${U}_{q}({\mathfrak g}_{N,tor})$ generated by the elements  $x_i^{\pm}(ke_s)$, $a_i^{(s)}(\ell)$, $K_i^{\pm1}$,
	$\gamma_s^{\pm\frac{1}{2}}$,  $q^{\pm d}$ for $i\in I$, then %It is easy to see that every
$U_q^{(s)}$ is isomorphic to the quantum toroidal algebra $U_{q}({\mathfrak g}_{2,tor})$.
\end{remark}

\begin{remark}\,\label{remark3.4} There exists another central element $\gamma_0=K_{0}K_{\theta}$, where $\theta$ is the highest root of the simple Lie algebra $\mathfrak{g}$.
\end{remark}

\subsection{Simplified generators and the algebra $\mathcal{U}_0({\mathfrak g}_{N,tor})$ }\,
 In this subsection, we define an algebra  $\mathcal{U}_0({\mathfrak g}_{N,tor})$ generated by finite Drinfeld generators with finitely many Drinfeld relations and we prove that the quantum $N$-toroidal algebra
	$U_{q}({\mathfrak g}_{N,tor})$  is isomorphic to a quotient  of $\mathcal{U}_0$(${\mathfrak g}_{N,tor})$ or $\mathcal{U}_0$(${\mathfrak g}_{N,tor})$ itself (c.f. Theorem \ref{maintheo2-1} and  Theorem \ref{maintheo2-2}).
We will prove these theorems in the next two subsections respectively.

It is easy to see that the elements $x_i^{\pm}(\underline{0})$, $x_{0}^{-\epsilon}(\epsilon e_s)$,  $K_i^{\pm1}$, $q^{\pm d}$    and  $\gamma_{s}^{\pm\frac{1}2}$  ($\epsilon=\pm1$ or $\pm$, $i\in I$, $s\in J$) form a reduced set of generators for the algebra
${U}_q({\mathfrak g}_{N,tor})$.
\begin{defi}\label{def3.5} Denote by $\mathcal{U}_0({\mathfrak g}_{N,tor})$ the associative algebra over $\mathbb{K}$ generated by
	$x_i^{\pm}(\underline{0})$, $x_{0}^{-\epsilon}(\epsilon e_s)$,  $K_i^{\pm1}$, $q^{\pm d}$    and  $\gamma_{s}^{\pm\frac{1}2}$  ($\epsilon=\pm1$ or $\pm$, $i\in I$, $s\in J$)  satisfying the following relations:
	\begin{align}\label{n:comm0}
	&\gamma_{s}^{\pm\frac{1}2} \textrm{are central such that\ } \gamma_s^{\frac{1}{2}}\gamma_s^{-\frac{1}{2}}=1,\\\label{n:comm1}
	&\textrm{ $q^{\pm d}$ and $K_i^{\pm}$ commute with each other and\,} K_i^{\pm 1}K_i^{\mp 1}=1=q^{\pm d}q^{\mp d},\\
	\label{n:comm2}
	&K_ix_j^{\pm}(\underline{0})K_i^{-1}=q_i^{\pm a_{ij}}x_j^{\pm}(\underline{0}),\quad K_ix_{0}^{-\epsilon}(\epsilon e_s)K_i^{-1}=q_i^{-\epsilon a_{i0}}x_{0}^{-\epsilon}(\epsilon e_s),\\\label{n:comm3}
	&[\,x_i^{\epsilon}(\underline{0}),\,x_{0}^{-\epsilon}(\epsilon e_s)\,]=0,
		\quad \hbox{for} \quad i\neq 0,\\\label{n:comm4}
	&[\,x_i^{+}(\underline{0}),\,x_j^{-}(\underline{0})\,]=\delta_{ij}\frac{K_i-K_i^{-1}}{q_i-q_i^{-1}},\qquad [\,x_{0}^{+}(-e_s),\,x_{0}^{-}(e_{s})\,]=\frac{\gamma_s^{-1}K_{0}-\gamma_s K_{0}^{-1}}{q_0-q_0^{-1}},\\\label{n:comm5}
&q^{d}x^{\pm}_i(\underline{0})q^{-d}=q^{\pm\delta_{0,i}}x^{\pm}_i(\underline{0}),\quad q^{d}x^{-\epsilon}_{0}(\epsilon e_s)q^{-d}=q^{-\epsilon}x^{-\epsilon}_{0}(\epsilon e_s),\\ \label{n:comm6}	
&[\,x_{0}^{-\epsilon}(\epsilon e_s),\,x_{0}^{-\epsilon}(\underline{0})\,]_{q_0^{-2}}=0,
	\qquad [\,x_{0}^{-\epsilon}(\epsilon e_s),\,x_{0}^{-\epsilon}(\epsilon e_{s'})\,]=0, \hbox{for}\, s\neq s'\in J,\\ \label{n:comm7}
		&\sum\limits_{t=0}^{\ell=1-a_{ij}}(-1)^{t}\Big[{\ell\atop
		t}\Big]_i(x_i^{\pm}(\underline{0}))^{\ell-t}x_j^{\pm}(\underline{0})(x_i^{\pm}(\underline{0}))^{t}=0, \qquad a_{ij}\leqslant 0,\\ \label{n:comm8}
	&\sum\limits_{t=0}^{\ell=1-a_{j0}}(-1)^{t}\Big[{\ell\atop
		t}\Big]_j(x_j^{\epsilon}(\underline{0}))^{\ell-t}x_{0}^{\epsilon}(-\epsilon e_s)(x_j^{\epsilon}(\underline{0}))^{t}=0, \qquad\mbox{ $a_{{0}j}\leqslant 0$}, \\ \label{n:comm9}
&\sum\limits_{t=0}^{\ell=1-a_{0j}}(-1)^{t}\Big[{\ell\atop
		t}\Big]_0(x_0^{\epsilon}(-\epsilon e_s))^{\ell-t}x_{j}^{\epsilon}(\underline{0})x_0^{\epsilon}(-\epsilon e_s))^{t}=0, \qquad\mbox{ $a_{{0}j}\leqslant 0$},\\
\label{n:comm10}	
&\rm{Sym}_{\underline{0},{-\epsilon e_s}}\sum\limits_{t=0}^{\ell=1-a_{0j}}(-1)^{t}\Big[{\ell\atop
		t}\Big]_0(x_0^{\epsilon}(-\epsilon e_s))^{\ell-t}x_{j}^{\epsilon}(\underline{k})(x_0^{\epsilon}(\underline{0}))^{t}=0, \quad\mbox{ $a_{{0}j}\leqslant 0$ and $\underline{k}=\underline{0}$},\\
\label{n:comm11}
&\sum\limits_{t=0}^{3}(-1)^{t}q_0^{-2t}\Big[{3\atop
		t}\Big]_0(x_0^{\epsilon}(\underline{0}))^{3-t}x_{0}^{-\epsilon}(\epsilon e_s)(x_0^{\epsilon}(\underline{0}))^{t}=0, \qquad\mbox{ for $s\in J$},\\
\label{n:comm12}
&\sum\limits_{t=0}^{3}(-1)^{t}q_0^{2 t}\Big[{3\atop
		t}\Big]_0(x_0^{\epsilon}(-\epsilon e_s))^{3-t}x_{0}^{-\epsilon}(\underline{0})(x_0^{\epsilon}(-\epsilon e_s))^{t}=0, \qquad\mbox{ for $s\in J$},
\end{align}
\begin{align}
\label{n:comm13}
&\sum\limits_{t=0}^{3}(-1)^{t}\Big[{3\atop
		t}\Big]_0(x_0^{\epsilon}(-\epsilon e_s))^{3-t}x_{0}^{-\epsilon}(\epsilon e_{s'})(x_0^{\epsilon}(-\epsilon e_s))^{t}=0,
\qquad\mbox{ for $s\neq s'\in J$},
\end{align}
where $\rm{Sym}_{{m_1}, {m_{2}}}$  denotes the symmetrization with respect to the indices $({m_1}, {m_{2}})$.	
\end{defi}

\begin{remark}\, Notice that all the relations above are part of those in Definition \ref{defi-ntor}, as they only involve with
special modes or generators. Though relations \eqref{n:comm11} and \eqref{n:comm12} seem new,  they can be deduced from relations \eqref{n:Dr7} by \eqref{n:Dr8} and \eqref{n:Dr5} respectively. { In fact, relations \eqref{n:comm11} and \eqref{n:comm12} are the following two relations in
Definition \ref{defi-ntor}, respectively,
\begin{equation*}
\begin{split}
&[x_0^{\epsilon}(\underline{0}), x_0^{\epsilon}(\epsilon e_s)]_{q_0^{-2}}=0,\\
&[x_0^{\epsilon}(-\epsilon e_s), x_0^{\epsilon}(-2\epsilon e_s)]_{q_0^{2}}=0.
\end{split}
\end{equation*}}
  % $[x_0^{\ep}(0), [x_0^{\ep}(0), [x_0^{\epsilon}(\underline{0}), x_0^{\epsilon}(\epsilon e_s)]]]_{q_0^{-2}, 1, %q_0^{-2}}=0$.}
%%relation \eqref{n:Dr12} holds automatically for the special case of $m_1=m_2=m_3=\ell=0$.
%%In this case, taking the case of "+" for example, we get that
%%\begin{equation*}
%%\begin{split}
%%\hbox{LHS of \eqref{n:Dr12}}&=
%%\Big[x_i^{+}(\underline{0}),\big[x_i^{+}(\underline{0}),[x_i^{+}(\underline{0}),x_i^{-}(\underline{0})]_1\big]_{q_i^{-2}} \Big]_{q_i^{-4}}=0
%%\end{split}
%%\end{equation*}
%%
%%\,(2) Similarly,
%in the special case of $m_1=m_2=m_3=0,\ell=1$,  relation \eqref{n:Dr12} holds automatically. In fact,
%in this case, taking the case of "+" for example, we derive that from relation \eqref{n:Dr7-2},
%\begin{equation*}
%\begin{split}
%\hbox{LHS of \eqref{n:Dr12}}&=
%\Big[x_i^{+}(\underline{0}),\big[x_i^{+}(\underline{0}),[x_i^{+}(\underline{0}),x_i^{-}(e_s)]_1\big]_{q_i^{-2}} \Big]_{q_i^{-4}}\\
%&=\gamma_s^{-1}K_i[x_i^{+}(\underline{0}), x_i^{+}(e_s)]_{q_i^{-2}}=0.
%\end{split}
%\end{equation*}
\end{remark}
\smallskip

From the definition of $\mathcal{U}_0$(${\mathfrak g}_{N,tor}$), we have the following proposition.

\begin{prop}\label{N-symmetry}\, For
	$s\in J$, the following map $\tau_s$ defines an automorphism of $\mathcal{U}_0$(${\mathfrak g}_{N,tor})$:
%there exists the $\mathbb{K}$-algebra automorphism $\tau_s$ of $\mathcal{U}_0$(${\mathfrak g}_{N,tor}$) such that
$$\tau_s(x_{0}^{\epsilon}(-\epsilon e_{s'}))=\left\{\begin{array}{cl} (q^d)^{(-1-\epsilon)d_0}x_0^{\epsilon}(\underline{0})(q^d)^{(1-\epsilon)d_0},
	&  \textit{ if\ $s=s'$;}\\
x_{0}^{\epsilon}(-\epsilon e_{s'}),
	&   \textit{if\ $s\neq s'$},
	\end{array}\right.~~~~ \tau_s(\gamma_{s'})=\left\{\begin{array}{cl} \gamma_{s}^{-1},
	&  \textit{ if\ $s=s'$;}\\
	\gamma_{s'}\gamma_s^{-1},
	&   \textit{if\ $s\neq s'$},
	\end{array}\right.
 $$
	$$\tau_s(x_0^{\epsilon}(\underline{0}))=(q^{d})^{(\epsilon+1)d_0}x_{0}^{\epsilon}(-\epsilon e_s)(q^{d})^{(\epsilon-1)d_0}, \quad \tau_s(x_i^{\pm}(\underline{0}))=x_i^{\pm}(\underline{0}),$$
$$\tau_s(K_0)=\gamma_s^{-1}K_0, \quad
	\tau_s(K_i)=K_i, \quad \tau_s(q^d)=q^d,$$	
where $i=1, 2, \cdots, n, s'\in J $ and $\epsilon=\pm$ or $\pm1$.
\end{prop}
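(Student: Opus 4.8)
The plan is to verify that $\tau_s$ preserves each defining relation \eqref{n:comm0}--\eqref{n:comm13}, so that it extends to an algebra endomorphism of $\mathcal{U}_0(\mathfrak g_{N,tor})$, and then to prove bijectivity by checking directly that $\tau_s$ is an involution. The unifying observation is that $\tau_s$ preserves both the $K_i$-weight and the $q^d$-degree of every generator. Indeed, by \eqref{n:comm5} the elements $x_0^\epsilon(\underline 0)$ and $x_0^\epsilon(-\epsilon e_s)$ carry the same $q^d$-degree $\epsilon$, and the conjugating factors $(q^d)^{(\epsilon\pm1)d_0}$ appearing in the definition of $\tau_s$ are inserted precisely so that this degree is unchanged; moreover the twist $K_0\mapsto\gamma_s^{-1}K_0$ alters $K_0$ only by the central element $\gamma_s^{-1}$, so every $K$-action is respected. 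First I would dispose of the inert relations: those involving only the fixed generators $x_i^\pm(\underline 0)$, $K_i$ $(i\ne0)$, $q^d$ --- in particular \eqref{n:comm0}, \eqref{n:comm1} and the Serre relation \eqref{n:comm7} --- are immediate, since $\tau_s$ fixes these and sends each $\gamma_{s'}$ to a central product of $\gamma$'s.

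Next I would treat the relations that genuinely involve the $0$-direction. The basic computational fact is that conjugation by $(q^d)^{kd_0}$ multiplies a generator of $q^d$-degree $m$ by $q_0^{\,km}$; tracking these scalars handles the Cartan-type relations \eqref{n:comm2}, \eqref{n:comm5}. For \eqref{n:comm3} and \eqref{n:comm4} the crucial point is that the two brackets $[x_0^+(\underline 0),x_0^-(\underline 0)]$ and $[x_0^+(-e_s),x_0^-(e_s)]$ are interchanged by $\tau_s$: applying $\tau_s$ to the former, the conjugating powers of $q^d$ cancel because the word $x_0^+(-e_s)x_0^-(e_s)$ has total $q^d$-degree $0$, leaving exactly $\frac{\gamma_s^{-1}K_0-\gamma_s K_0^{-1}}{q_0-q_0^{-1}}=\tau_s\!\bigl(\frac{K_0-K_0^{-1}}{q_0-q_0^{-1}}\bigr)$, which is the right-hand side of the second identity in \eqref{n:comm4}. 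I would carry out this bookkeeping uniformly by recording, for each monomial, the net power of $q^d$ released when the conjugating factors are pushed to the outside.

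The substantive work, and the step I expect to be the main obstacle, is the family of Serre-type relations \eqref{n:comm6} and \eqref{n:comm8}--\eqref{n:comm13}, in which several $x_0$'s in different directions occur in one monomial. Here the plan is to commute every conjugating factor $(q^d)^{\pm d_0}$ to the outside of the monomial; since the inner letters carry nonzero degree, each such move releases a power of $q_0$, and the delicate point is to confirm that these powers assemble exactly into the weights $q_0^{\mp2t}$ recorded in \eqref{n:comm11}--\eqref{n:comm12}. Concretely, I expect $\tau_s$ to interchange \eqref{n:comm11} and \eqref{n:comm12} --- consistent with their mirror-image coefficients $q_0^{-2t}\leftrightarrow q_0^{2t}$ and with the two-term forms recorded in the Remark following Definition \ref{def3.5} --- and to carry \eqref{n:comm13} for a direction $s'\ne s$ to \eqref{n:comm11} with $s$ replaced by $s'$, the coefficients $q_0^{-2t}$ there being produced entirely by the $q^d$-conjugations. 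The remaining mixed relations \eqref{n:comm6} and \eqref{n:comm8}--\eqref{n:comm10} are handled in the same spirit, using that the $x_i^\pm(\underline0)$ $(i\ne0)$ stay fixed while only the $x_0$-letters get conjugated; the $q$-bracket identities \eqref{v1}--\eqref{v2} may be invoked to reorganize the reversed brackets that appear.

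Finally, to upgrade the endomorphism to an automorphism I would compute $\tau_s^2$ on generators. A direct check gives $\tau_s^2=\mathrm{id}$: the conjugating exponents cancel in pairs (e.g. $(q^d)^{(\epsilon+1)d_0}$ against $(q^d)^{(-1-\epsilon)d_0}$), while $\gamma_s\mapsto\gamma_s^{-1}\mapsto\gamma_s$, $\gamma_{s'}\mapsto\gamma_{s'}\gamma_s^{-1}\mapsto\gamma_{s'}$ for $s'\ne s$, and $K_0\mapsto\gamma_s^{-1}K_0\mapsto K_0$. Thus $\tau_s$ is an involutive endomorphism, hence bijective, and the proposition follows.
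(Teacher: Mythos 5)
Your proposal is correct and follows essentially the same route as the paper: both verify that $\tau_s$ preserves each defining relation \eqref{n:comm0}--\eqref{n:comm13} by pushing the conjugating powers of $q^d$ to the outside of each monomial and checking that the released powers of $q_0$ convert one relation into (a nonzero multiple of) another — e.g.\ the two twisted relations in \eqref{n:comm6} into each other and \eqref{n:comm13} into \eqref{n:comm11}, exactly as in the paper's sample computations. Your explicit check that $\tau_s^2=\mathrm{id}$ to establish bijectivity is a small welcome addition that the paper's proof leaves implicit.
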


\begin{proof}\, It suffices to check that $\tau_s$ keeps the relations \eqref{n:comm0}-\eqref{n:comm13}. First we verify the relations \eqref{n:comm4}, and note that $[\,x_i^{+}(\underline{0}),\,x_j^{-}(\underline{0})\,]
=\delta_{ij}\frac{K_i-K_i^{-1}}{q_i-q_i^{-1}}$ follows from the definition of $\tau_s$. By definition
\begin{equation*}
\begin{split}
&\tau_s([\,x_{0}^{+}(-e_s),\,x_{0}^{-}(e_{s})\,])=[\,(q^d)^{-2d_0}x_{0}^{+}(\underline{0}),\,x_{0}^{-}(\underline{0})(q^d)^{2d_0}\,]
=\frac{K_0-K_0^{-1}}{q_0-q_0^{-1}},
\end{split}
\end{equation*}
which matches with $\tau_s(\frac{\gamma_s^{-1}K_{0}-\gamma_s K_{0}^{-1}}{q_0-q_0^{-1}})$ due to $\tau_s(K_0)=\gamma_s^{-1}K_0$. Similarly
the equality is checked for $s\neq s'$.
%On one hand we get that
%\begin{equation*}
%\begin{split}
%&\tau_s(\frac{\gamma_s^{-1}K_{0}-\gamma_s K_{0}^{-1}}{q_0-q_0^{-1}})=\frac{\tau_s(\gamma_s^{-1}K_{0})-\tau_s(\gamma_s K_{0}^{-1})}{q_0-q_0^{-1}}=\frac{K_0-K_0^{-1}}{q_0-q_0^{-1}},
%\end{split}
%\end{equation*}
%and for $s\neq s'$
%\begin{equation*}
%\begin{split}
%&\tau_s([\,x_{0}^{+}(-e_{s'}),\,x_{0}^{-}(e_{s'})\,])=[\,x_{0}^{+}(-e_{s'}),\,x_{0}^{-}(e_{s'})\,]
%=\frac{\gamma_{s'}^{-1}K_{0}-\gamma_{s'} K_{0}^{-1}}{q_0-q_0^{-1}}.
%\end{split}
%\end{equation*}
%On the other hand, it holds that
%\begin{equation*}
%\begin{split}
%&\tau_s(\frac{\gamma_{s'}^{-1}K_{0}-\gamma_{s'} K_{0}^{-1}}{q_0-q_0^{-1}})=\frac{\tau_s(\gamma_{s'}^{-1}K_{0})-\tau_s(\gamma_{s'} K_{0}^{-1})}{q_0-q_0^{-1}}=\frac{\gamma_{s'}^{-1}K_{0}-\gamma_{s'} K_{0}^{-1}}{q_0-q_0^{-1}}.
%\end{split}
%\end{equation*}

To check relations \eqref{n:comm6}, by definition it follows that
\begin{equation*}
\begin{split}
&\tau_s(
[\,x_{0}^{+}(-e_s),\,x_{0}^{+}(\underline{0})\,]_{q_0^{-2}})=[\,(q^d)^{-2d_0}x_{0}^{+}(\underline{0}),\,(q^d)^{2d_0}x_{0}^{+}(-e_s)\,]_{q_0^{-2}}\\
&=q_0^{-2}[\,x_{0}^{+}(\underline{0}),\,x_{0}^{+}(-e_s)\,]_{q_0^{2}}=0.
\end{split}
\end{equation*}
Similarly,
\begin{equation*}
\tau_s(
[\,x_{0}^{+}(-e_s),\,x_{0}^{+}(-e_{s'})\,])%[\,(q^d)^{-2d_0}x_{0}^{+}(\underline{0}),\,x_{0}^{+}(-e_{s'})\,]\\
=(q^d)^{-2d_0}[\,x_{0}^{+}(\underline{0}),\,x_{0}^{+}(-e_{s'})\,]_{q_0^{2}}=0.
\end{equation*}

For relation \eqref{n:comm13}, one has that
\begin{equation*}
\begin{split}
&\tau_s(\sum\limits_{t=0}^{3}(-1)^{t}\Big[{3\atop
		t}\Big]_0(x_0^{+}(-e_s))^{3-t}x_{0}^{-}(e_{s'})(x_0^{+}(-e_s))^{t})\\
&=(q^d)^{-6d_0}\sum\limits_{t=0}^{3}(-1)^{t}q_0^{-2t}\Big[{3\atop
		t}\Big]_0(x_0^{+}(\underline{0}))^{3-t}x_{0}^{-}(e_{s'})(x_0^{+}(\underline{0}))^{t}=0.
\end{split}
\end{equation*}
We can verify  that $\tau_s$ keeps the other relations in the same way.
\end{proof}

Proposition \ref{N-symmetry} reveals a symmetry of the algebra $\mathcal{U}_0$(${\mathfrak g}_{N,tor}$), which will be shown by the Dynkin diagram in the next subsection.

\subsection{$\mathcal{U}_0$(${\mathfrak g}_{N,tor}$) and the extended quantized GIM algebra of $N$-fold affinizations}
In this subsection, we focus on showing that the algebra $\mathcal{U}_0$(${\mathfrak g}_{N,tor}$) can be realized as a quotient of the extended quantized GIM algebra of  $N$-fold affinization.

First let us denote the following elements of $\mathcal{U}_0({\mathfrak g}_{N,tor})$: for $i\in I$ and $s\in J$,
\begin{align*}
F_{-s}&=x_{0}^{-}(e_s)(q^{-d})^{2d_0},\quad E_{-s}=(q^{d})^{2d_0}x_{0}^{+}(-e_s), \quad K_{-s}=\gamma_s^{-1} K_0,\\
E_i&=x_i^+(\underline{0}), \quad F_i=x_i^-(\underline{0}),\quad K_i=K_i,
\end{align*}
and set
$d_s=d_0$ for $s\in \{-N+1, \cdots, -1\}$ and $q_i=q^{d_i}$ for $i\in \tilde{J}$.

\begin{remark}\, Note that  $M$ given in Definition \ref{defi3.10} is a symmetrizable GIM of $N$-fold affinization of $A$: $D_MM$ is symmetric for
the diagonal matrix $D_M=\sum\limits_{i\in \tilde{J}}d_iE_{ii}=\left(\begin{array}{lr}
d_0I_N & 0\\
0 & D_0
\end{array}\right)$ where $D_0=\diag(d_i|i\in I_0)$, $d_i$ was given in Section 3.2.
\end{remark}

 Based on the aforementioned elements we have the following result. %Now we have the following proposition from Definition 3.7.

\begin{prop} The algebra $\mathcal{U}_0({\mathfrak g}_{N,tor})$ is the associative algebra over $\mathbb{K}$ generated by $E_i, F_i, K_i, q^{\pm d} (i\in \tilde{J})$,	satisfying the following relations:
\begin{align}
&\begin{array}{ll}\label{3.34}
	& K_i^{\pm1}K_i^{\mp1}= q^{\pm d}q^{\mp d}=1, \quad \textrm{$q^{\pm d}$ and $K_i^{\pm}$ commute with each other},\\
	&K_iE_jK_i^{-1}=q_i^{m_{ij}}E_j,\qquad K_iF_jK_i^{-1}=q_i^{-m_{ij}}F_j,\\
	&[\,E_i,\,F_j\,]=\delta_{i,j}\frac{K_i-K_i^{-1}}{q_i-q_i^{-1}};
	\end{array}\\
	%\end{equation}
	%\begin{equation}
&\begin{array}{ll}\label{3.35}
	&q^dE_iq^{-d}=q E_i, \qquad q^dF_iq^{-d}=q^{-1} F_i, \qquad\quad i\in \{-N+1, \cdots, -1, 0\},\\
	&q^dE_jq^{-d}=E_j, \qquad q^dF_jq^{-d}=F_j, \qquad\qquad\quad j \in \{1, 2, \cdots, n\};
		\end{array}\\
&	\begin{array}{ll}\label{3.36}
	&[\,E_i, F_j\,]=0, \\
	&\sum\limits_{s=0}^{1-m_{ij}}(-1)^{s}\Big[{1-m_{ij}\atop
		s}\Big]_i
	E_{i}^{1-m_{ij}-s}E_jE_{i}^{s}=0,\\
	&\sum\limits_{s=0}^{1-m_{ij}}(-1)^{s}\Big[{1-m_{ij}\atop
		s}\Big]_i
	F_{i}^{1-m_{ij}-s}F_jF_{i}^{s}=0,
	\end{array}  \qquad i\neq j\in \tilde{J},  m_{ij}<0;
\end{align}
\begin{align}
& \begin{array}{ll}\label{3.37}
	&[\,E_i, E_j\,]=0=[F_i, F_j],\\
	&\sum\limits_{s=0}^{1+m_{ij}}(-1)^{s}\Big[{1+m_{ij}\atop
		s}\Big]_i
	E_{i}^{1+m_{ij}-s}F_jE_{i}^{s}=0, \\
	&\sum\limits_{s=0}^{1+m_{ij}}(-1)^{s}\Big[{1+m_{ij}\atop
		s}\Big]_i
	F_{i}^{1+m_{ij}-s}E_jF_{i}^{s}=0,
	\end{array} \qquad i\neq j\in \tilde{J}, m_{ij}>0;\\
%\end{align}\begin{gather}
\label{3.38}
&\quad	[\,E_i, E_j\,]=0=[E_i, F_j]=[F_i, F_j], \qquad i\neq j\in \tilde{J}, m_{ij}=0; \\
&	\begin{array}{ll}\label{3.39}
	&[\,E_{-j},\, [\,E_0,\,E_i\,]_{q_0}]_{q_0}+
	[\,E_0,\, [\,E_{-j},\,E_i\,]_{q_0}]_{q_0^{-3}}=0,\\
	&[\,F_{-j},\, [\,F_0,\,F_i\,]_{q_0^{-1}}]_{q_0^{3}}+
	[\,F_0,\, [\,F_{-j},\,F_i\,]_{q_0^{-1}}]_{q_0^{-1}}=0,
	\end{array}  \quad m_{i0}=1, 0\neq i\in I, j\in J;
\end{align}
	\begin{equation}
	\begin{array}{ll}\label{3.40}
	&[\,E_{-j},\,[\,E_0,\, [\,E_0,\,E_i\,]_{q_0^2}]_{q_0^{-2}}]_{q_0^{4}}+
	[[\,E_0,\, [\,E_{-j},\,[\,E_0,\, E_i\,]_{q_0^2}]]_{1}]_{q_0^{-2}}\vspace{6pt}\\
	&\hspace{4.82cm} +
	[[\,E_{0},\, [\,E_{0},\,[\,E_{-j},\, E_i\,]_{q_0^2}]]_{q_0^{-4}}]_{q_0^{-2}}=0,\vspace{9pt}\\
	&[\,F_{-j},\,[\,F_0,\, [\,F_0,\,F_i\,]_{q_0^{-2}}]_{q_0^{4}}]_{q_0^{2}}+
	[[\,E_0,\, [\,E_{-j},\,[\,E_0,\, E_i\,]_{q_0^{-2}}]]_{1}]_{q_0^{2}}\vspace{6pt}\\
	&\hspace{4.68cm}+
	[[\,E_{0},\, [\,E_{0},\,[\,E_{-j},\, E_i\,]_{q_0^{-2}}]]_{q_0^{2}}]_{q_0^{-4}}=0,\vspace{9pt}\\
	&[\,E_{-j},\,[\,E_{-j},\, [\,E_0,\,E_i\,]_{q_0^2}]_{1}]_{q_0^{2}}+
	[[\,E_{-j},\, [\,E_{0},\,[\,E_{-j},\, E_i\,]_{q_0^2}]]_{q_0^{-4}}]_{q_0^{2}}\vspace{6pt}\\
	&\hspace{4.7cm} +
	[[\,E_{0},\, [\,E_{-j},\,[\,E_{-j},\, E_i\,]_{q_0^2}]]_{1}]_{q_0^{2}}=0,\vspace{9pt}\\
	&[\,F_{-j},\,[\,F_{-j},\, [\,F_0,\,F_i\,]_{q_0^{-2}}]_{1}]_{q_0^{-2}}+
	[[\,F_{-j},\, [\,F_{0},\,[\,F_{-j},\, F_i\,]_{q_0^{-2}}]]_{q_0^{4}}]_{q_0^{-2}}\vspace{6pt}\\
	&\hspace{4.92cm} +
	[[\,F_{0},\, [\,F_{-j},\,[\,F_{-j},\, F_i\,]_{q_0^{-2}}]]_{1}]_{q_0^{-2}}=0,
	\end{array}
	\end{equation}
where   $m_{ij}\in M$ defined in Definition \ref{defi3.10}.
\end{prop}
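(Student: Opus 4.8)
This is a \emph{re-presentation} result: it claims that, under the invertible substitution
$$E_{-s}=(q^{d})^{2d_0}x_0^+(-e_s),\ \ F_{-s}=x_0^-(e_s)(q^{-d})^{2d_0},\ \ K_{-s}=\gamma_s^{-1}K_0,\ \ E_i=x_i^+(\underline{0}),\ \ F_i=x_i^-(\underline{0}),$$
the algebra $\mathcal{U}_0(\mathfrak{g}_{N,tor})$ acquires the presentation \eqref{3.34}--\eqref{3.40}. The plan is to introduce the algebra $\mathcal{A}$ abstractly presented by $E_i,F_i,K_i,q^{\pm d}$ ($i\in\tilde{J}$) modulo \eqref{3.34}--\eqref{3.40}, and to exhibit mutually inverse homomorphisms between $\mathcal{A}$ and $\mathcal{U}_0(\mathfrak{g}_{N,tor})$. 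Since each old generator is recovered from the new ones --- for instance $x_0^+(-e_s)=(q^{-d})^{2d_0}E_{-s}$, $x_0^-(e_s)=F_{-s}(q^{d})^{2d_0}$ and $\gamma_s=K_0K_{-s}^{-1}$ --- both sets generate the same algebra, so the whole content is the equivalence of the two families of relations under the substitution. I would therefore verify, first, that $E_i,F_i,K_i,q^{\pm d}$ satisfy \eqref{3.34}--\eqref{3.40} inside $\mathcal{U}_0(\mathfrak{g}_{N,tor})$ and, conversely, that \eqref{3.34}--\eqref{3.40} imply all of \eqref{n:comm0}--\eqref{n:comm13}.

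The Cartan-type and Serre relations translate directly. Relation \eqref{3.34} assembles \eqref{n:comm1}, \eqref{n:comm2} and \eqref{n:comm4}: the conjugation formulas follow by pushing the central $\gamma_s$ and the $q^{d}$ powers past the $K_i$ and reading off the exponent $m_{ij}$ from the GIM $M$ of Definition \ref{defi3.10} (e.g.\ $K_{-s}E_{-s}K_{-s}^{-1}=q_0^{2}E_{-s}=q_{-s}^{m_{-s,-s}}E_{-s}$, and $K_jE_{-s}K_j^{-1}=q_j^{a_{j0}}E_{-s}=q_j^{m_{j,-s}}E_{-s}$), while $[E_{-s},F_{-s}]$ reduces, after the $q^{d}$ factors cancel, to the second identity of \eqref{n:comm4}, giving $\frac{K_{-s}-K_{-s}^{-1}}{q_0-q_0^{-1}}$. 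Relation \eqref{3.35} is a rewriting of \eqref{n:comm5}, and the vanishing brackets of \eqref{3.38} come from \eqref{n:comm3}. The relations with $m_{ij}<0$ in \eqref{3.36} are precisely the single-mode Serre relations \eqref{n:comm7}--\eqref{n:comm9}; the relations with $m_{ij}>0$ in \eqref{3.37}, which occur only between the affinization indices where $m_{-s,-s'}=2$, come from \eqref{n:comm6} (the commutators $[E_{-s},E_{-s'}]=0$ and $[E_0,E_{-s}]=0$, the latter using that the conjugation defining $E_{-s}$ exactly cancels the $q_0^{-2}$ weight in the bracket) together with the cubic relations \eqref{n:comm11}--\eqref{n:comm13}; here the $q_0^{\mp 2t}$ weights in \eqref{n:comm11}, \eqref{n:comm12} are absorbed into the $q^{d}$ conjugations and the Drinfeld form becomes the homogeneous GIM form of \eqref{3.37}.

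The essential step is the pair of ``mixed'' relations \eqref{3.39} and \eqref{3.40}, which couple the two affinization generators $E_0,E_{-j}$ --- two copies of the affine zero-node placed at the modes $\underline{0}$ and $-e_j$ --- to a finite generator $E_i$ adjacent to the affine node. These are exactly the $q$-bracket reformulations of the \emph{symmetrized} Drinfeld--Serre relation \eqref{n:comm10}. Concretely, I would take \eqref{n:comm10} with $\underline{k}=\underline{0}$ and $\ell=1-a_{0j}$, substitute $E_0=x_0^+(\underline{0})$ and $E_{-j}=(q^{d})^{2d_0}x_0^+(-e_j)$, conjugate away the $q^{d}$ powers by \eqref{3.35}, and then re-express the polarized symmetric sum as nested $q$-brackets by repeated application of the identities \eqref{v1} and \eqref{v2}. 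Counting the number of affine factors, the case $\ell=2$ (a single bond to the affine node) produces \eqref{3.39}, while $\ell=3$ (a double bond) produces the four summands of \eqref{3.40}, according to whether the three affine slots carry modes $(\underline{0},\underline{0},-e_j)$ or $(\underline{0},-e_j,-e_j)$; the $F$-versions follow identically from the lower Serre relation in \eqref{n:comm10}. I expect this to be the main obstacle: one must track the powers of $q_0$ generated both by the re-bracketing identities \eqref{v1}, \eqref{v2} and by the $q^{d}$ conjugations, and check that after collecting terms the quantum integers $\Big[{\ell\atop t}\Big]_0$ recombine exactly into the subscripts $q_0,q_0^{-3},q_0^{\pm2},q_0^{\pm4}$ of \eqref{3.39}--\eqref{3.40}. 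The symmetry automorphism $\tau_s$ of Proposition \ref{N-symmetry}, which swaps the modes $\underline{0}$ and $-e_s$, can be used to cut down the bookkeeping by fixing one term and generating its partner under $\tau_s$.

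Finally, for completeness I would run the substitution backwards. Each of \eqref{3.34}--\eqref{3.38} returns \eqref{n:comm0}--\eqref{n:comm9} and \eqref{n:comm11}--\eqref{n:comm13} (modulo the routine $q^{d}$-conjugation bookkeeping), and expanding the nested $q$-brackets in \eqref{3.39}, \eqref{3.40} --- the inverse of the computation above --- recovers the symmetrized relation \eqref{n:comm10}. Hence the substitution and its inverse descend to mutually inverse homomorphisms $\mathcal{A}\rightleftarrows\mathcal{U}_0(\mathfrak{g}_{N,tor})$, and the two presentations define the same algebra, which proves the proposition.
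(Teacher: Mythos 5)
Your proposal is correct and follows essentially the same route as the paper: rewrite the generators via the substitution $E_{-s}=(q^{d})^{2d_0}x_0^{+}(-e_s)$, $F_{-s}=x_0^{-}(e_s)(q^{-d})^{2d_0}$, $K_{-s}=\gamma_s^{-1}K_0$, and then match the GIM-type relations \eqref{3.34}--\eqref{3.40} one by one against \eqref{n:comm0}--\eqref{n:comm13}, with \eqref{n:comm11}--\eqref{n:comm13} supplying the $m_{ij}=2$ cases of \eqref{3.37} and the mixed Serre relations \eqref{n:comm7}--\eqref{n:comm12} (in particular the symmetrized \eqref{n:comm10}) supplying \eqref{3.39}--\eqref{3.40}. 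You are in fact somewhat more explicit than the paper about the converse implication and about unwinding \eqref{3.39}--\eqref{3.40} via the $q$-bracket identities \eqref{v1}--\eqref{v2}, but the argument is the same.
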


\begin{proof}\, First of all, we remark that all generators $E_i, F_i, K_i, q^d$ are simply rewriting of the generators
of $\mathcal{U}_0({\mathfrak g}_{N,tor})$, therefore the isomorphism follows by listing the corresponding
 relations. In fact, relation \eqref{3.34} holds by relations \eqref{n:comm0}-\eqref{n:comm4}. It is easy to see that relation \eqref{3.35} follows from relation \eqref{n:comm5} and relation \eqref{3.36} holds from relations \eqref{n:comm6}-\eqref{n:comm7}. For relation \eqref{3.37}, it suffices to check the relations involving with $m_{ij}=2$ for $i\neq j$, since the
other relations can be verified directly. Specifically we need to show that for $i\neq j\in\{-N+1, \cdots, -2, -1, 0\}$.
$$\sum\limits_{s=0}^{1+m_{ij}}(-1)^{s}\Big[{3\atop
		s}\Big]_i
	E_{i}^{1+m_{ij}-s}F_jE_{i}^{s}=0.$$
We will check it for two cases: $m_{-s\, -s'}=2$ for $s\neq s'\in J$, $m_{0\, -s}=2$ and $m_{-s\, 0}=2$ for $s\in J$.
It follows from relation \eqref{n:comm13} for the first case. For the last two cases, it holds from \eqref{n:comm11} and \eqref{n:comm12}.

For relation \eqref{3.38}, we consider the case of $m_{0\, -s}=0$ for example, that is,
\begin{equation*}
	\begin{array}{ll}
	&[\,E_{0},\,E_{-s}]=[\,x_0^+(0),\,(q^{d})^{2d_0}x_{0}^{+}(-e_{s})]=q^{-2}(q^{d})^{2d_0}[\,x_0^+(0),\,x_{0}^{+}(-e_{s})]_{q^2}=0,
\end{array}
\end{equation*}
which follows from relation \eqref{n:comm13} by using \eqref{n:comm6}.	

Relations \eqref{3.39}-\eqref{3.40} hold by the Serre relations \eqref{n:comm7}-\eqref{n:comm12} directly.
\end{proof}

\begin{remark}\, From Proposition \ref{N-symmetry}, there exists an automorphism $\tau_{\sigma}$ of the algebra ${{\mathcal U}}_0({\mathfrak g}_{N,tor})$ for $\sigma\in S_{X}$ where $X=\{0, -1, \cdots, -N+1\}$, such that $\tau_{\sigma}(q^d)=q^d$, $\tau_{\sigma}(\gamma_s)=\gamma_{-\sigma(-i)}$ for $s\in J$ and $i\in K$,

$$\tau_{\sigma}(E_j)=\left\{\begin{array}{ll} E_{\sigma{(j)}},
	&  \textit{ if }~~j\in X;\vspace{12pt}\\
	E_{j},
	&   \textit{if}~~~ j\notin X,
	\end{array}\right.\qquad
\tau_{\sigma}(F_j)=\left\{\begin{array}{ll} F_{\sigma{(j)}},
	&  \textit{ if }~~~ j\in X;\vspace{12pt}\\
	F_{j},
	&   \textit{if} ~~ j\notin X,
	\end{array}\right.$$
$$\tau_{\sigma}(K_j)=\left\{\begin{array}{ll} K_{\sigma{(j)}},
	&  \textit{ if }~~~ j\in X;\vspace{12pt}\\
	K_{j},
	&   \textit{if}~~~ j\notin X,
	\end{array}\right.$$
where $\gamma_0$ is defined in Remark \ref{remark3.4}.
\end{remark}

Therefore we have the following Corollary immediately.
\begin{coro}\,\label{maincoro}
	The algebra  ${{\mathcal U}}_0({\mathfrak g}_{N,tor})$  is isomorphic to the quotient algebra $U_{q}(\mathcal{L}(M))/K$ of the extended quantized GIM algebra of the $N$-fold affinization $U_{q}(\mathcal{L}(M))$. That is,
\begin{gather*}
{\mathcal U}_{0}(\mathfrak{g}_{N,tor})\cong U_{q}(\mathcal{L}(M))/K,
\end{gather*}
where $K$ is the ideal of  $U_{q}(\mathcal{L}(M))$ generated by Serre relations \eqref{3.39} and \eqref{3.40}.
\end{coro}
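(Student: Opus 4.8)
The plan is to read this off from the preceding Proposition by comparing presentations. By that Proposition, $\mathcal{U}_0(\mathfrak{g}_{N,tor})$ is isomorphic to the algebra presented by the generators $E_i, F_i, K_i^{\pm 1}, q^{\pm d}$ ($i\in\tilde J$) subject to the relations \eqref{3.34}--\eqref{3.40}. The key observation, which I would verify relation by relation, is that \eqref{3.34}--\eqref{3.38} are \emph{identical} to the defining relations (M1)--(M7) of the extended quantized GIM algebra $U_q(\mathcal{L}(M))$ associated with the $N$-fold affinization matrix $M$ of Definition \ref{defi3.10}: splitting $\tilde J=J_1\cup J_2$ with $J_1=\{-N+1,\dots,0\}$ and $J_2=\{1,\dots,n\}$, the Cartan-type and grading relations (M1)--(M4) match \eqref{3.34}--\eqref{3.35}, while the Serre relations (M5), (M6), (M7) in the three regimes $m_{ij}<0$, $m_{ij}>0$ ($i\neq j$), $m_{ij}=0$ ($i\neq j$) match \eqref{3.36}, \eqref{3.37}, \eqref{3.38} verbatim, using $q_i=q^{d_i}$ and the symmetry of $D_M M$ so that the quantum binomials agree on both sides.

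Granting this dictionary, the argument becomes formal. Since $U_q(\mathcal{L}(M))$ is by definition presented by (M1)--(M7), i.e.\ by \eqref{3.34}--\eqref{3.38}, and $K$ is the two-sided ideal generated by the additional Serre elements in \eqref{3.39} and \eqref{3.40}, the quotient $U_q(\mathcal{L}(M))/K$ is presented by the \emph{same} generating set subject to all of \eqref{3.34}--\eqref{3.40}. This is exactly the presentation of $\mathcal{U}_0(\mathfrak{g}_{N,tor})$ furnished by the Proposition, so the two algebras share a common presentation.

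To record the isomorphism as mutually inverse maps, I would send each GIM generator to the rewritten element of $\mathcal{U}_0(\mathfrak{g}_{N,tor})$ introduced before the Proposition, namely $E_{-s}=(q^d)^{2d_0}x_0^+(-e_s)$, $F_{-s}=x_0^-(e_s)(q^{-d})^{2d_0}$, $K_{-s}=\gamma_s^{-1}K_0$ and $E_i=x_i^+(\underline 0)$, $F_i=x_i^-(\underline 0)$, $K_i=K_i$. This is a well-defined homomorphism $\Phi\colon U_q(\mathcal{L}(M))\to\mathcal{U}_0(\mathfrak{g}_{N,tor})$ because its images satisfy (M1)--(M7) (shown in the Proposition), it is surjective because these elements generate $\mathcal{U}_0(\mathfrak{g}_{N,tor})$, and it kills $K$ because the images also satisfy \eqref{3.39}--\eqref{3.40}; hence it descends to $\bar\Phi\colon U_q(\mathcal{L}(M))/K\to\mathcal{U}_0(\mathfrak{g}_{N,tor})$. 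The inverse $\Psi$ is supplied by the Proposition's presentation of $\mathcal{U}_0(\mathfrak{g}_{N,tor})$: it sends generators to the cosets of the matching GIM generators and is well-defined precisely because $U_q(\mathcal{L}(M))/K$ obeys every one of \eqref{3.34}--\eqref{3.40}. Comparing $\bar\Phi$ and $\Psi$ on generators shows they are mutually inverse, which yields the stated isomorphism.

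I do not anticipate a genuine obstacle: the substantive work --- checking that the rewritten generators satisfy (M1)--(M7) as well as the new relations \eqref{3.39} and \eqref{3.40} --- is already done in the Proposition. The only point requiring care is the completeness of the relation matching, namely confirming that \eqref{3.39} and \eqref{3.40} are the \emph{only} relations of $\mathcal{U}_0(\mathfrak{g}_{N,tor})$ beyond the GIM relations (M1)--(M7), so that $K$ is generated by exactly these two families; this is what guarantees the quotient is neither larger nor smaller than $\mathcal{U}_0(\mathfrak{g}_{N,tor})$.
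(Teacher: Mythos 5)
Your proposal is correct and follows essentially the same route as the paper: the Corollary is read off immediately from the preceding Proposition by observing that relations \eqref{3.34}--\eqref{3.38} are exactly the GIM relations (M1)--(M7) (with the symmetry of $D_MM$ reconciling the $q_i^{m_{ij}}$ versus $q_j^{m_{ji}}$ conventions), while \eqref{3.39}--\eqref{3.40} generate the ideal $K$. The "point requiring care" you flag --- that \eqref{3.39}--\eqref{3.40} exhaust the extra relations --- is precisely what the Proposition's presentation of $\mathcal{U}_0(\mathfrak{g}_{N,tor})$ asserts, so your deferral to it matches the paper's own logic.
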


Moreover, we have the following two results, whose proofs will be given in
the following two sections for the cases of  $N=2$ and $N\geq 3$, respectively.
 \begin{theo}\label{maintheo2-1}
	\, As an associative algebra, the quantum $2$-toroidal algebra
	$U_{q}({\mathfrak g}_{2,tor})$  is isomorphic to a quotient algebra of $\mathcal{U}_0$(${\mathfrak g}_{2,tor})$ for type $A$ and   $\mathcal{U}_0$(${\mathfrak g}_{2,tor})$ itself  for other types. More specifically, one has %precisely, one has the following two theorems.
	
\begin{gather*}
U_{q}(\mathfrak {g}_{2,tor})\cong\left\{\begin{array}{ll} \mathcal{U}_0(\mathfrak{g}_{2,tor})/H_1,
	&  \textit{ for  type A;}\vspace{12pt}\\
	\mathcal{U}_0(\mathfrak{g}_{2,tor}),
	&   \textit{ otherwise},
	\end{array}\right.
\end{gather*}
where %the ideal
$H_1$ is an ideal defined in Section 4. % in the proof of Theorem \ref{maintheo2-1}.
\end{theo}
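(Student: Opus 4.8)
The plan is to read Theorem~\ref{maintheo2-1} as the assertion that the tautological surjection from the finitely presented algebra $\mathcal{U}_0(\mathfrak{g}_{2,tor})$ onto the Drinfeld-presented algebra is an isomorphism outside type $A$ and has kernel exactly $H_1$ in type $A$. First I would observe that there is a canonical algebra map $\pi\colon \mathcal{U}_0(\mathfrak{g}_{2,tor})\to U_{q}(\mathfrak{g}_{2,tor})$ sending each simplified generator to the element of the same name, where I use that for $N=2$ the component algebra $\overline{U}_q(\mathfrak{g}_{2,tor})$ of Definition~\ref{defi-ntor} coincides with $U_{q}(\mathfrak{g}_{2,tor})$ (the quartic relations \eqref{n:Dr7}, \eqref{n:Dr12} being vacuous). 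This $\pi$ is well defined because, as noted in the Remark following Definition~\ref{def3.5}, every relation \eqref{n:comm0}--\eqref{n:comm13} of $\mathcal{U}_0$ is among the relations of $U_{q}(\mathfrak{g}_{2,tor})$, and it is surjective because $x_i^{\pm}(\underline0)$, $x_0^{-\epsilon}(\epsilon e_1)$, $K_i^{\pm1}$, $q^{\pm d}$, $\gamma_1^{\pm\frac12}$ generate the target. The whole content is thus to determine $\ker\pi$.

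To control $\ker\pi$ I would build a one-sided inverse by reconstructing every Drinfeld mode inside $\mathcal{U}_0(\mathfrak{g}_{2,tor})$ (resp. its quotient by $H_1$) from the finite generating set. Following the computation in the Remark after Definition~\ref{defi-ntor}, set $a_0(1)=K_0^{-1}\gamma_1^{1/2}[x_0^+(\underline0),x_0^-(e_1)]$ and $a_0(-1)=K_0\gamma_1^{-1/2}[x_0^+(-e_1),x_0^-(\underline0)]$, which already lie in $\mathcal{U}_0$ since the $\epsilon$-shifted vectors at node $0$ are generators. Using relation \eqref{n:Dr5} as a recursion, the normalized brackets $[a_0(\pm1),x_j^{\pm}(\underline0)]$ produce $x_j^{\pm}(\pm e_1)$ for every $j$ adjacent to $0$; by connectedness of the affine Dynkin diagram this propagates across edges to give $x_j^{\pm}(\pm e_1)$, and then $a_j(\pm1)=K_j^{\mp1}\gamma_1^{\pm1/2}[\cdots]$, for all $j\in I$, after which the higher modes $x_j^{\pm}(\ell e_1+\underline k)$ and $a_j(\pm\ell)$ with $\ell\geq2$ follow by iterating the same recursion. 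I would then verify that these reconstructed elements satisfy the full list \eqref{n:Dr1}--\eqref{n:Dr12}, the Heisenberg relations \eqref{n:Dr3} and the mixed relations \eqref{n:Dr5}, \eqref{n:Dr8} being handled by induction on the mode degree via the $q$-bracket identities \eqref{v1}--\eqref{v2}, in the manner of the bracket manipulations illustrated in the Remarks of this section.

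The delicate part is the Drinfeld--Serre relations \eqref{n:Dr11} (and \eqref{n:Dr7-2}, \eqref{n:Dr10}) at arbitrary modes, which must be forced from their lowest-mode instances \eqref{n:comm7}--\eqref{n:comm13}. I would propagate these by repeatedly commuting with $a_j(\pm1)$ and reorganizing through \eqref{v1}--\eqref{v2}, with \eqref{n:comm7}--\eqref{n:comm10} supplying the quadratic base cases and the cubic relations \eqref{n:comm11}--\eqref{n:comm13} the remaining ones. Outside type $A$ this propagation closes up with no extra input, so the reconstruction defines a genuine inverse $\sigma\colon U_{q}(\mathfrak{g}_{2,tor})\to\mathcal{U}_0(\mathfrak{g}_{2,tor})$ to $\pi$, giving $\ker\pi=0$ and the isomorphism $U_{q}(\mathfrak{g}_{2,tor})\cong\mathcal{U}_0(\mathfrak{g}_{2,tor})$. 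In type $A$, however, the propagation fails to close: the reconstructed elements satisfy \eqref{n:Dr11} only modulo additional cubic relations that genuinely hold in the toroidal algebra but are independent of \eqref{n:comm0}--\eqref{n:comm13}. I would take the ideal these generate to be $H_1$ (the precise generators being spelled out in Section~4), check $H_1\subseteq\ker\pi$ since they are true relations of $U_{q}(\mathfrak{g}_{2,tor})$, and show that $\sigma$ descends to a two-sided inverse $U_{q}(\mathfrak{g}_{2,tor})\to\mathcal{U}_0(\mathfrak{g}_{2,tor})/H_1$ of the induced map $\bar\pi$, yielding $U_{q}(\mathfrak{g}_{2,tor})\cong\mathcal{U}_0(\mathfrak{g}_{2,tor})/H_1$.

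I expect the main obstacle to be precisely this closing-up analysis: proving that away from type $A$ the all-mode Serre relations are entirely forced by \eqref{n:comm7}--\eqref{n:comm13} with nothing left over, while in type $A$ the leftover is exactly $H_1$ and not a larger ideal. Pinning this down requires careful bookkeeping of the $q$-bracket rearrangements at each mode together with a PBW-type or grading argument on $\mathcal{U}_0(\mathfrak{g}_{2,tor})/H_1$ to certify that no further hidden relations occur, so that $\bar\pi$ is injective rather than merely surjective.
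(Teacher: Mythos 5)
Your overall architecture matches the paper's: a canonical surjection $\pi\colon\mathcal{U}_0(\mathfrak{g}_{2,tor})\to U_q(\mathfrak{g}_{2,tor})$, reconstruction of all Drinfeld modes inside $\mathcal{U}_0$ by bracketing with $a_0(\pm1)$ and propagating along the Dynkin diagram, and an induction on mode degree using the $q$-bracket identities \eqref{v1}--\eqref{v2} to verify the full list of relations of Definition~\ref{defi-ntor}. That much is the paper's Steps 1--3, including the case-by-case treatment of the non-simply-laced Serre relations.

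However, your identification of the kernel in type $A$ rests on the wrong mechanism, and this is a genuine gap. You claim the propagation of the Drinfeld--Serre relations \eqref{n:Dr11} ``fails to close'' in type $A$ and that $H_1$ is generated by leftover cubic relations. In fact the Serre-relation propagation in type $A$ is no harder than in the other simply-laced types (all $a_{ij}\in\{2,0,-1\}$); what distinguishes type $A$ is that the affine Dynkin diagram is a cycle. Consequently there are two inequivalent paths around the cycle along which one can transport the mode-raising operators: starting from $a_0(\pm1)$ one builds $x_i^{\pm}(\epsilon)$ and $a_i(\pm1)$ going one way, but one can equally define elements $y_i^{\pm}(\epsilon)$ and $b_i(\pm1)$ going the other way (through $x_n^{\pm}(0)$ first). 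These two families are identified in $U_q(\mathfrak{g}_{2,tor})$ but not a priori in $\mathcal{U}_0(\mathfrak{g}_{2,tor})$, and the paper's $H_1$ is precisely the ideal generated by the differences $x_n^{-\epsilon}(\epsilon)-y_n^{-\epsilon}(\epsilon)$; one then shows by induction that modulo $H_1$ all the remaining differences $y_i^{\pm}(k)-x_i^{\pm}(k)$ and $b_i(k)-a_i(k)$ vanish, so that $H_1=\ker\pi$. An ideal generated by ``extra cubic relations'' would be a different object, and your argument gives no reason to believe it coincides with $\ker\pi$; in particular the two-sided inverse $\sigma$ you propose to descend to $\mathcal{U}_0(\mathfrak{g}_{2,tor})/H_1$ is not well defined until the cycle ambiguity is resolved. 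To repair the proof you must introduce the second family $y_i^{\pm},b_i$ explicitly, define $H_1$ from the single comparison at node $n$, and carry out the inductive collapse of the remaining differences.
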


 \begin{theo}\label{maintheo2-2}
	\, As an associative algebra, the algebra
	$\overline{U}_{q}({\mathfrak g}_{N,tor})$ ($N>2$)  is isomorphic to a quotient algebra of $\mathcal{U}_0$(${\mathfrak g}_{N,tor})$:
\begin{gather*}
\overline{U}_{q}(\mathfrak {g}_{N,tor})\cong \mathcal{U}_0(\mathfrak{g}_{N,tor})/H_2,
	\end{gather*}
where the ideal $H_2$ will be defined in Section 5. %in the proof of Theorem \ref{maintheo2-2}.

\end{theo}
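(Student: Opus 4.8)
The plan is to realize $\overline{U}_{q}(\mathfrak{g}_{N,tor})$ as a quotient of $\mathcal{U}_0(\mathfrak{g}_{N,tor})$ by first exhibiting a surjective algebra homomorphism and then identifying its kernel with an explicit ideal $H_2$. I would begin by constructing a homomorphism $\pi\colon \mathcal{U}_0(\mathfrak{g}_{N,tor})\to\overline{U}_{q}(\mathfrak{g}_{N,tor})$ sending each simplified generator $x_i^{\pm}(\underline{0})$, $x_{0}^{-\epsilon}(\epsilon e_s)$, $K_i^{\pm1}$, $q^{\pm d}$, $\gamma_s^{\pm\frac12}$ to the element of the same name in $\overline{U}_{q}(\mathfrak{g}_{N,tor})$. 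This is well defined because, as recorded in the remark following Definition \ref{def3.5}, every defining relation \eqref{n:comm0}--\eqref{n:comm13} of $\mathcal{U}_0(\mathfrak{g}_{N,tor})$ is a special-mode instance of a relation in Definition \ref{defi-ntor}, so the images automatically satisfy them.

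Next I would prove that $\pi$ is surjective, which is the assertion (announced just before Definition \ref{def3.5}) that the simplified generators generate all of $\overline{U}_{q}(\mathfrak{g}_{N,tor})$. The point is to reconstruct the remaining Drinfeld generators by induction. From relation \eqref{n:Dr8}, the modes $a_i^{(s)}(\pm1)$ appear as the brackets $K_i^{-1}\gamma_s^{1/2}[x_i^+(\underline{0}),x_i^-(e_s)]$ and $K_i\gamma_s^{-1/2}[x_i^+(-e_s),x_i^-(\underline{0})]$ displayed in the remark after Definition \ref{defi-ntor}; higher modes $a_i^{(s)}(\ell)$ then follow by repeated bracketing through \eqref{n:Dr3}, and the imaginary generators $\phi_i^{(s)},\varphi_i^{(s)}$ are their exponentials. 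Finally relation \eqref{n:Dr5} shifts $x_j^{\pm}(\underline{0})$ to an arbitrary mode $x_j^{\pm}(\underline{k})$ by successively bracketing with the $a_i^{(s)}(\ell)$ for the various $s\in J$; for $N\ge 3$ this last step genuinely uses several indices $s$ to reach multi-component $\underline{k}$.

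Having $\pi$ surjective, it remains to identify $\ker\pi$. I would define $H_2$ to be the ideal of $\mathcal{U}_0(\mathfrak{g}_{N,tor})$ generated by the simplified-generator expressions of those general-mode relations of Definition \ref{defi-ntor} that the finite presentation does not already force — for $N\ge 3$ these include the new mixed-shift relation \eqref{n:Dr7} and the degree-three relation \eqref{n:Dr12}, which have no analogue when $N=2$. The inclusion $H_2\subseteq\ker\pi$ is immediate, since each generator of $H_2$ maps to a relation valid in $\overline{U}_{q}(\mathfrak{g}_{N,tor})$. For the reverse inclusion I would construct an inverse $\psi\colon\overline{U}_{q}(\mathfrak{g}_{N,tor})\to\mathcal{U}_0(\mathfrak{g}_{N,tor})/H_2$, sending each generator of $\overline{U}_{q}(\mathfrak{g}_{N,tor})$ to the class of the bracket expression built in the surjectivity step, and verify that all the defining relations \eqref{n:Dr1}--\eqref{n:Dr12} hold modulo $H_2$. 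Here the $q$-bracket identities \eqref{v1}--\eqref{v2} and the centrality of the $\gamma_s$ are the main computational tools, while the automorphisms $\tau_s$ of Proposition \ref{N-symmetry} transport relations between different $s$ and between the two signs $\epsilon$, cutting down the case analysis.

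The hard part will be this last verification. Re-deriving relations \eqref{n:Dr3}, \eqref{n:Dr5} and \eqref{n:Dr8} for all modes from their lowest instances is a delicate induction on $\ell$ and on $\underline{k}$ via \eqref{v1}--\eqref{v2}, and checking the higher-mode Serre relations \eqref{n:Dr11} together with the new relations \eqref{n:Dr7}, \eqref{n:Dr12} in full generality is the technical heart; the crux is to make $H_2$ exactly large enough to supply the relations among multi-component modes $x_j^{\pm}(\underline{k})$ that the finite presentation cannot reach for $N\ge 3$, yet small enough that $\psi\circ\pi$ and $\pi\circ\psi$ are both the identity, yielding the asserted isomorphism. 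Combined with Corollary \ref{maincoro}, this realizes $\overline{U}_{q}(\mathfrak{g}_{N,tor})$ as a quotient of the extended quantized GIM algebra of $N$-fold affinization, as desired.
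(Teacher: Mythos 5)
Your first two steps coincide with the paper's proof: the paper (Theorem~\ref{5.1}) defines the same map $\pi_1$ on the simplified generators, notes it is well defined because \eqref{n:comm0}--\eqref{n:comm13} are instances of the relations in Definition~\ref{defi-ntor}, and proves surjectivity by exactly the inductive reconstruction you describe — building $a_0^{(s)}(\pm1)$ from the brackets $[x_0^+(\underline{0}),x_0^-(e_s)]$ and $[x_0^+(-e_s),x_0^-(\underline{0})]$, climbing to higher one-component modes, propagating to the other nodes $i\in I$ as in Section~4, and finally reaching multi-component modes $x_i^{\pm}(\underline{k})$ by successive bracketing with the $a_i^{(s)}(k_s)$ over the various $s\in J$. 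Where you diverge is the treatment of $H_2$: the paper simply sets $H_2:=\ker\pi_1$ and concludes by the first isomorphism theorem, which is all the theorem as stated requires, since $H_2$ is only promised to be "defined in the proof." You instead propose an explicit generating set for $H_2$ (built from \eqref{n:Dr7}, \eqref{n:Dr12} and the unreached general-mode relations) together with an inverse map $\psi$, which amounts to an explicit presentation of $\overline{U}_q(\mathfrak{g}_{N,tor})$ by the simplified generators.

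That extra programme is where the gap lies. Proving $\ker\pi_1\subseteq H_2$ for your explicitly generated $H_2$ — equivalently, verifying that all of \eqref{n:Dr1}--\eqref{n:Dr12} hold for the reconstructed elements modulo only those generators — is precisely the step you defer as "the hard part," and it is precisely what the paper declines to do: the remark closing Section~5 states that, unlike the $N=2$ case, describing $\ker\pi_1$ is complicated, and it only exhibits \emph{some} elements of the kernel (differences $x_i^{\pm}(\underline{k})-y_i^{\pm}(\underline{k})$ coming from different bracketing orders, the commutators $[x_i^{\pm}(ke_s),x_i^{\pm}(le_{s'})]$, etc.) without claiming they generate it. So if you are content with the theorem as literally stated, drop your third step and take $H_2=\ker\pi$; if you want the explicit $H_2$, you must actually carry out the verification (including the well-definedness of the multi-component modes independently of the order of the shifts $a_i^{(s)}$, which is exactly the first item in the paper's remark), and that is substantial unfinished work, not a routine application of \eqref{v1}--\eqref{v2} and the automorphisms $\tau_s$.
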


Combining Theorem \ref{maintheo2-1} and Theorem \ref{maintheo2-2}  with Corollary \ref{maincoro}, we obtain the following  main theorem, which generalizes a well-known result of Berman and Moody for Lie algebras \cite{BM}.
\begin{theo}
	\, The  algebra
	$\overline{U}_{q}({\mathfrak g}_{N,tor})$ are isomorphic to quotient algebras of the extended quantized GIM algebras of $N$-fold affinization $U_{q}(\mathcal{L}(M))$. % More explicitly,
%
%\begin{gather*}
%U_{q}(\mathfrak {g}_{N,tor})\cong
%U_{q}(\mathcal{L})/K_1,
%\end{gather*}
%where the ideal $K_1$ is generated by Serre relations $(3.39)$ and $(3.40)$ as in Corollary \ref{maincoro}.
\end{theo}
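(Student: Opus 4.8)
The plan is to derive the final theorem as a direct consequence of the three results that immediately precede it, namely Corollary~\ref{maincoro}, Theorem~\ref{maintheo2-1}, and Theorem~\ref{maintheo2-2}. The strategy is compositional: each of these results identifies one algebra in a chain of quotient maps connecting $\overline{U}_q(\mathfrak{g}_{N,tor})$ to the extended quantized GIM algebra $U_q(\mathcal{L}(M))$, and the theorem follows by composing the corresponding surjections. The only genuine content beyond the cited results is the bookkeeping needed to treat the cases $N=2$ and $N\geq 3$ uniformly, and to confirm that the composite of two quotient maps is again a quotient map.

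First I would fix $N\geq 2$ and set up the chain of algebra homomorphisms. By Corollary~\ref{maincoro}, the simplified algebra $\mathcal{U}_0(\mathfrak{g}_{N,tor})$ is isomorphic to the quotient $U_q(\mathcal{L}(M))/K$, where $K$ is the ideal generated by the Serre relations \eqref{3.39} and \eqref{3.40}; in particular there is a canonical surjection $\pi_1\colon U_q(\mathcal{L}(M))\twoheadrightarrow \mathcal{U}_0(\mathfrak{g}_{N,tor})$ with kernel $K$. Next, I would invoke the structural isomorphisms relating $\overline{U}_q(\mathfrak{g}_{N,tor})$ to $\mathcal{U}_0(\mathfrak{g}_{N,tor})$. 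For $N\geq 3$, Theorem~\ref{maintheo2-2} gives directly
\begin{equation*}
\overline{U}_q(\mathfrak{g}_{N,tor})\cong \mathcal{U}_0(\mathfrak{g}_{N,tor})/H_2,
\end{equation*}
so $\overline{U}_q(\mathfrak{g}_{N,tor})$ is a quotient of $\mathcal{U}_0(\mathfrak{g}_{N,tor})$ via a surjection $\pi_2$ with kernel $H_2$. For $N=2$, the analogous statement is supplied by Theorem~\ref{maintheo2-1}: in the non-type-$A$ cases one has $U_q(\mathfrak{g}_{2,tor})\cong \mathcal{U}_0(\mathfrak{g}_{2,tor})$ (so $\pi_2$ is an isomorphism), while in type $A$ one has $U_q(\mathfrak{g}_{2,tor})\cong \mathcal{U}_0(\mathfrak{g}_{2,tor})/H_1$, again exhibiting the toroidal algebra as a quotient of $\mathcal{U}_0$.

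The final step is to compose. In each case, the composite $\pi_2\circ\pi_1\colon U_q(\mathcal{L}(M))\twoheadrightarrow \overline{U}_q(\mathfrak{g}_{N,tor})$ is a surjective algebra homomorphism, being a composition of surjections, and its kernel is the preimage $\pi_1^{-1}(\ker\pi_2)$, which is the ideal of $U_q(\mathcal{L}(M))$ generated by $K$ together with lifts of the generators of $H_1$ or $H_2$. By the first isomorphism theorem this yields
\begin{equation*}
\overline{U}_q(\mathfrak{g}_{N,tor})\cong U_q(\mathcal{L}(M))\big/\,\pi_1^{-1}(\ker\pi_2),
\end{equation*}
exhibiting $\overline{U}_q(\mathfrak{g}_{N,tor})$ as a quotient of the extended quantized GIM algebra, which is exactly the assertion. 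This generalizes Proposition~4.15 of \cite{BM} from the Lie algebra setting to the quantum setting. The main obstacle is not in this assembly argument, which is formal, but rather lies entirely inside the two cited theorems: verifying that the defining relations of $\overline{U}_q(\mathfrak{g}_{N,tor})$ that are \emph{not} already built into $\mathcal{U}_0(\mathfrak{g}_{N,tor})$ (for instance the higher-mode relations recovered from the finitely many simplified generators via \eqref{n:Dr3} and \eqref{n:Dr5}) are consequences of the $\mathcal{U}_0$ relations modulo $H_1$ or $H_2$. That verification is deferred to Sections~4 and~5 and is where the substantial computation resides; the present theorem simply records the logical consequence once those are in hand.
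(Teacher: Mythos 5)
Your proposal is correct and follows exactly the paper's own argument: the paper derives this theorem by combining Corollary~\ref{maincoro} with Theorems~\ref{maintheo2-1} and~\ref{maintheo2-2}, which is precisely the chain of surjections you compose. Your write-up merely makes explicit the formal bookkeeping (composition of quotient maps and the first isomorphism theorem) that the paper leaves implicit.
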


\section{Proof of Theorem \ref{maintheo2-1}}\label{S:case1}

\subsection{The algebra $\mathcal{U}_0$($\mathfrak{g}_{2,tor}$)}\, Recall that the algebra $\mathcal{U}_0$($\mathfrak{g}_{2, tor}$)
was defined in Definition \ref{def3.5} for $N=2$ and $J=\{1\}$. To simplify notation, for $i\in I$ and
$k\in \mathbb{Z}$, we denote that $x_i^{\pm}(0)\doteq x_i^{\pm}(\underline{0})$,  $x_i^{\pm}(k)\doteq x_i^{\pm}(ke_1)$
and $\gamma^{\pm \frac{1}{2}}\doteq \gamma_1^{\pm \frac{1}{2}}$.

The following result is immediate by definition. %of $\mathcal{U}_0$($\mathfrak{g}_{2,tor}$).

\begin{prop}
There is a $\mathbb{C}$-algebra anti-involution $\iota$ of $\mathcal{U}_0$($\mathfrak{g}_{2,tor}$) such that $\iota:  x_i^{\pm}(k)\mapsto  x_i^{\mp}(-k)$,
$K_i\mapsto K_i^{-1}$,
$q^{d}\mapsto q^{-d}$, $\gamma^{\frac{1}{2}}\mapsto \gamma^{-\frac{1}{2}}$ and $q\mapsto q^{-1}$. %over the complex field.
\end{prop}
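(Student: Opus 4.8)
The plan is to exhibit $\iota$ as an anti-homomorphism on the free algebra generated by the symbols $x_i^{\pm}(k)$, $K_i^{\pm1}$, $q^{\pm d}$, $\gamma^{\pm\frac12}$, and then verify that it descends to the quotient by the defining ideal of $\mathcal{U}_0(\mathfrak{g}_{2,tor})$. Concretely, I would first define $\iota$ on generators by the stated formulas, extend it to words by the anti-multiplicative rule $\iota(ab)=\iota(b)\iota(a)$, and combine this with the rule $\iota(q)=q^{-1}$ on scalars so that $\iota$ is $\mathbb{C}$-semilinear in the sense dictated by $q\mapsto q^{-1}$. Since the generating set is mapped (bijectively, involutively) into itself and $\iota^2=\mathrm{id}$ on generators, the only real content is that $\iota$ preserves the two-sided ideal of relations from Definition \ref{def3.5} (specialized to $N=2$, $J=\{1\}$).

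The main work is therefore a relation-by-relation check that each defining relation is sent by $\iota$ to a consequence of the defining relations. First I would dispose of the easy structural relations \eqref{n:comm0}, \eqref{n:comm1}, \eqref{n:comm5}: here $\iota$ merely swaps $K_i\leftrightarrow K_i^{-1}$, $q^d\leftrightarrow q^{-d}$, $\gamma^{\frac12}\leftrightarrow\gamma^{-\frac12}$, and simultaneously $q\mapsto q^{-1}$, so each relation maps to itself or its obvious inverse. For the $K$-action relations \eqref{n:comm2} and the weight relation \eqref{n:comm5}, the key observation is that applying $\iota$ turns $K_i x_j^{\pm}(\underline0)K_i^{-1}=q_i^{\pm a_{ij}}x_j^{\pm}(\underline0)$ into the same statement for $x_j^{\mp}$ because $\iota$ reverses the product, inverts $K_i$, and sends $q_i^{\pm a_{ij}}\mapsto q_i^{\mp a_{ij}}$, which matches the $\mp$-version of the relation; the sign on $k\mapsto -k$ is compatible since these relations hold for every mode. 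The commutator relation \eqref{n:comm4} is the delicate one: one computes $\iota\big([x_i^+(\underline0),x_j^-(\underline0)]\big)=[x_j^+(\underline0),x_i^-(\underline0)]$ after reversing the bracket, which equals $\delta_{ij}\frac{K_i-K_i^{-1}}{q_i-q_i^{-1}}$, matching $\iota$ of the right-hand side because $\iota(K_i-K_i^{-1})=K_i^{-1}-K_i$ while $\iota(q-q^{-1})=q^{-1}-q$, and the two sign changes cancel. The analogous bookkeeping for the second equation of \eqref{n:comm4}, involving $\gamma_s^{\pm1}$ factors, is the place where one must be most careful, and I expect this to be the main obstacle of the verification.

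Next I would handle the Serre-type relations \eqref{n:comm7}--\eqref{n:comm13}. The strategy here is uniform: each such relation is an alternating sum $\sum_t(-1)^t\big[{\ell\atop t}\big]_i\,(x_i^{\epsilon})^{\ell-t}\,x_j^{\epsilon}\,(x_i^{\epsilon})^{t}$, and applying the anti-involution $\iota$ reverses the order of the factors in each monomial, sends $x^{\epsilon}\mapsto x^{-\epsilon}$, flips the mode signs, and sends $q_i\mapsto q_i^{-1}$, hence $\big[{\ell\atop t}\big]_i\mapsto\big[{\ell\atop t}\big]_i$ (Gaussian binomials are invariant under $q_i\mapsto q_i^{-1}$). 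Re-indexing the sum by $t\mapsto \ell-t$ recovers precisely the opposite-sign Serre relation already present in the defining set, so each relation maps into the ideal. I would note explicitly that the bracketed-commutator relation \eqref{n:comm6} and its $q_0^{\mp2}$-twists are handled the same way, using that $\iota$ sends a $q$-bracket $[a,b]_v$ to $-v\,[\,\iota(a),\iota(b)\,]_{v^{-1}}$ up to the scalar coming from $\iota(v)$, so a vanishing $q$-bracket maps to a vanishing $q$-bracket. Once all relations in \eqref{n:comm0}--\eqref{n:comm13} are checked, $\iota$ is a well-defined algebra anti-homomorphism, and since $\iota^2$ fixes every generator it is an anti-involution, completing the proof.
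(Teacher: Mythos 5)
Your proposal is correct and is exactly the routine verification that the paper itself omits, stating only that the result is ``immediate by definition'': one checks that the generators of Definition \ref{def3.5} (for $N=2$) are permuted by $\iota$ and that each defining relation \eqref{n:comm0}--\eqref{n:comm13} is carried into the defining ideal, using the invariance of the Gaussian binomials under $q\mapsto q^{-1}$ and the identity $\iota([a,b]_v)=-\iota(v)\,[\iota(a),\iota(b)]_{\iota(v)^{-1}}$ together with a reindexing $t\mapsto \ell-t$ in the Serre sums. Your treatment matches this in all essentials, so nothing further is needed.
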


Before proving Theorem \ref{maintheo2-1}, we recall a useful result, which can be checked easily (cf. \cite[Lemma 4.1]{JZ1}).

\begin{lemm}\label{N:iso} Suppose the associative algebras $\mathcal A=\langle x_i\rangle/(R_1) $ and $\mathcal B=\langle \hat x_i, y_j\rangle/(\hat R_1, R_2, R_{12})$ with respective relations $R_1=R_1(x_i), \hat R_1=R_1(\hat x_i), R_2=R_2(y_j), R_{12}=R_{12}(\hat x_i, y_j)$. Define the map $\phi: \mathcal A=\langle x_i\rangle/(R_1) \longrightarrow \mathcal B=\langle \hat x_i, y_j\rangle/(\hat R_1, R_2, R_{12})$ such that $ x_i\mapsto \hat x_i$.
	If $y_j\in Im\phi$ inside $\mathcal B$, and $R_2(y_j)\subset (\hat R_1)$, $R_{12}(\hat x_i, y_j)\subset (\hat R_1)$,
	then $\mathcal A\simeq \mathcal B$ as associative algebras.
\end{lemm}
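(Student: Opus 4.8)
The final statement is Lemma~\ref{N:iso}, an abstract isomorphism criterion for presented associative algebras. The plan is to prove it purely formally at the level of presentations, exhibiting a two-sided inverse to the map $\phi$. First I would make precise what the hypotheses say. We are given $\mathcal A = \langle x_i\rangle/(R_1)$, the free associative algebra on the generators $x_i$ modulo the two-sided ideal generated by the relations $R_1 = R_1(x_i)$, and $\mathcal B = \langle \hat x_i, y_j\rangle/(\hat R_1, R_2, R_{12})$, the free algebra on $\hat x_i$ and $y_j$ modulo the ideal generated by $\hat R_1 = R_1(\hat x_i)$ together with $R_2(y_j)$ and $R_{12}(\hat x_i, y_j)$. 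Since $\phi$ sends each $x_i\mapsto \hat x_i$ and every defining relation $R_1(x_i)$ maps to $R_1(\hat x_i)=\hat R_1$, which is zero in $\mathcal B$, the map $\phi$ is a well-defined algebra homomorphism; this is the routine direction.

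The substance is constructing the inverse $\psi:\mathcal B\to\mathcal A$. I would define $\psi$ on generators by $\hat x_i\mapsto x_i$ and, using the hypothesis $y_j\in \mathrm{Im}\,\phi$, by choosing for each $j$ a preimage so that $y_j\mapsto \psi(y_j)\in\mathcal A$ with $\phi(\psi(y_j))=y_j$. To show $\psi$ descends to $\mathcal B$, I must verify that all three families of defining relations of $\mathcal B$ are killed. For $\hat R_1$ this is immediate, since $R_1(x_i)=0$ holds in $\mathcal A$ by definition. The two hypotheses $R_2(y_j)\subset(\hat R_1)$ and $R_{12}(\hat x_i,y_j)\subset(\hat R_1)$ are exactly what is needed for the remaining two families: they say that in $\mathcal B$ the relations $R_2$ and $R_{12}$ already lie in the ideal generated by $\hat R_1$, i.e.\ they are consequences of $\hat R_1$ and hence, after applying $\psi$, become consequences of $R_1$ in $\mathcal A$, so they vanish. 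Thus $\psi$ is a well-defined homomorphism.

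Finally I would check $\psi\circ\phi=\mathrm{id}_{\mathcal A}$ and $\phi\circ\psi=\mathrm{id}_{\mathcal B}$. On generators, $\psi(\phi(x_i))=\psi(\hat x_i)=x_i$, so $\psi\circ\phi$ fixes a generating set and is the identity. Conversely $\phi(\psi(\hat x_i))=\phi(x_i)=\hat x_i$ and $\phi(\psi(y_j))=y_j$ by the choice of preimage, so $\phi\circ\psi$ fixes the generators of $\mathcal B$ and is the identity. Hence $\phi$ is an isomorphism.

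\textbf{Expected main obstacle.} The only genuinely delicate point is the well-definedness of $\psi$ on the $R_2$ and $R_{12}$ relations: one must read the containments $R_2(y_j)\subset(\hat R_1)$ and $R_{12}(\hat x_i,y_j)\subset(\hat R_1)$ correctly as statements about membership in the two-sided ideal $(\hat R_1)$ of the \emph{free} algebra (or its image), so that applying $\psi$ transports these consequences faithfully into $(R_1)$ in $\mathcal A$; the subtlety is that $\psi(y_j)$ is not a generator but a chosen noncommutative polynomial in the $x_i$, so one should confirm that the containment is preserved under substitution. Everything else is a formal verification on generators.
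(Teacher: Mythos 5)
Your argument is correct and is the standard proof of this criterion: $\phi$ is well defined since $R_1(x_i)\mapsto \hat R_1=0$, and the hypotheses are precisely what is needed to define the inverse $\psi$ on generators (sending $y_j$ to a chosen preimage) and to check it kills $\hat R_1$, $R_2$, $R_{12}$, with the containments read in the free algebra so that they transport under substitution. The paper itself gives no proof of this lemma — it only cites \cite[Lemma 4.1]{JZ1} and calls it easy — so your write-up simply supplies the omitted routine verification, and it does so correctly, including flagging the one genuinely delicate point (where the ideal membership must be interpreted).
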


By this lemma, verification is made easy by only checking the relations \eqref{n:comm0}-\eqref{n:comm13} for the set of simplified generators
of the  quantum $2$-toroidal algebra displayed in the algebra $\mathcal{U}$($\mathfrak{g}_{2,tor}$).
%Using this lemma, we now turn to check that all generators and all defining relations of the quantum $2$-toroidal algebra can be obtained only using the simplified generators and relations \eqref{n:comm0}-\eqref{n:comm13}.

\subsection{Proof of Theorem \ref{maintheo2-1}}\,

We now set out to prove Theorem \ref{maintheo2-1}. The idea is to show that a simplified set of relations are satisfied for the quantum
algebra $\mathcal{U}_0(\mathfrak{g}_{2,tor})$ in the toroidal case. Some of the computations are made in \cite{JZ1} for the affine type $A$,
so we will focus on the relations special for the toroidal case.
%Technically speaking, these relations in the toroidal case are checked exactly similar for the toroidal case,
The proof is divided into four steps:

{\bf Step 1:} We will prove that $\mathcal U_0(\mathfrak{g}_{2,tor})$ contains all other elements $x_0^{\pm}(\epsilon k)$, $a_0(\epsilon k)$ involving the index $i=0$
in Definition \ref{defi-ntor}, these elements satisfy relations consistent with Definition \ref{defi-ntor}. Indeed, there exists an subalgebra of $\mathcal U_0(\mathfrak{g}_{2,tor})$ generated by $x_0^{\pm}(\epsilon k)$, $a_0(\epsilon k), \gamma^{\frac{1}{2}}$ and $K_0^{\pm 1}$, and we denote it by $U_q(\widehat{\mathfrak{sl}}_2)_0$.

Here we use induction on degree $k\in \mathbb{N}$, first we introduce the following useful elements for $k=1$ in $\mathcal U_0(\mathfrak{g}_{2,tor})$:
\begin{gather}\label{4.1}
a_0(1)=K_0^{-1}\gamma^{1/2}\,\bigl[\,x_0^+(0),\,x_0^-(1)\,\bigr]\in
{\mathcal U}_0(\mathfrak{g}_{2,tor}),\\\label{4.2}
a_0(-1)=K_0{\gamma}^{-1/2}\,\bigl[\,x_0^+(-1),\,x_0^-(0)\,
\bigr]\in
{\mathcal U}_0(\mathfrak{g}_{2,tor}),
\end{gather}
which are used to inductively generate higher degree elements using a spiral argument
based on Lemma \ref{N:iso}.

%Using the above notation, we get by direct calculation that
%\begin{gather}\label{a11}
%x_0^+(-1)=[2]^{-1}\gamma^{1/2}\,\bigl[\,a_0(-1),\,x_0^+(0)\,\bigr],\\\label{a21}
%x_0^-(1)=-[2]^{-1}\gamma^{-1/2}\,\bigl[\,a_0(1),\,x_0^-(0)\,\bigr],
%\end{gather}

%As $\iota( a_0(\pm 1))= a_0(\mp 1)$ we often only check half of the relations.
%The following relations involving with  $a_0(\epsilon)$, $\epsilon=\pm 1$ are clear.

Furthermore, for $\epsilon=\pm$ or $\pm1$ we have that
\begin{gather}\label{4.3}
x_0^{\epsilon}(\epsilon)=[2]_0^{-1}\gamma^{\frac{\epsilon}{2}}\,\bigl[\,a_0(\epsilon),\,x_0^{\epsilon}(0)\,\bigr]\in
{\mathcal U}_0(\mathfrak{g}_{2,tor}).
\end{gather}

Now we check the relation \eqref{n:Dr7} involving with the elements ${x}_0^{\epsilon}(0)$ and ${x}_0^{\epsilon}(\epsilon)$.
\begin{prop} Using the above notations, we have that
\begin{align}\label{4.4}
&[{x}_0^{\epsilon}(0),\, {x}_0^{\epsilon}(\epsilon)]_{q_0^{-2}}=0.
%&\sum_{k=0}^{3}(-1)^kq_0^{2 k}
%	\Big[{3\atop  k}\Big]_{0}(x_0^{\epsilon}(0))^{3-k} x_{0}^{-\epsilon}(-\epsilon)(x_0^{\epsilon}(0))^{k}=0.\\
%&\sum_{k=0}^{3}(-1)^kq_0^{-2 k}
%	\Big[{3\atop  k}\Big]_{0}(x_0^{\epsilon}(0))^{3-k} x_{0}^{-\epsilon}(\epsilon)(x_0^{\epsilon}(0))^{k}=0.
\end{align}
\end{prop}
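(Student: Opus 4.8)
The plan is to establish the identity $[{x}_0^{\epsilon}(0),\, {x}_0^{\epsilon}(\epsilon)]_{q_0^{-2}}=0$ by substituting the explicit expressions \eqref{4.1}--\eqref{4.3} for $x_0^{\epsilon}(\epsilon)$ in terms of the generators $a_0(\epsilon)$ and $x_0^{\epsilon}(0)$, and then reducing everything to the defining relations \eqref{n:comm0}--\eqref{n:comm13} together with the induced relations among the zero-modes. By the anti-involution $\iota$ of the preceding proposition, it suffices to treat a single sign, say $\epsilon=+$, since $\iota$ interchanges the two cases (up to inverting $q$), so I will work out $[x_0^+(0), x_0^+(1)]_{q_0^{-2}}=0$ and let $\iota$ deliver the $\epsilon=-$ version.

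First I would expand $x_0^+(1)=[2]_0^{-1}\gamma^{1/2}[a_0(1), x_0^+(0)]$ using \eqref{4.3}, so that the target $q$-bracket becomes
\begin{equation*}
[x_0^+(0),\, x_0^+(1)]_{q_0^{-2}}=[2]_0^{-1}\gamma^{1/2}\bigl[\,x_0^+(0),\,[a_0(1),\,x_0^+(0)]\,\bigr]_{q_0^{-2}}.
\end{equation*}
The next step is to bring in the commutation relation between $a_0(1)$ and $x_0^+(0)$, which is precisely the specialization of \eqref{n:Dr5}: $[a_0(1), x_0^+(0)]$ is (a scalar multiple of) $x_0^+(e_1)=x_0^+(1)$, and more usefully $[a_0(\ell), x_j^{\pm}(\underline k)]$ is governed by the symmetric Cartan datum through $\frac{[\ell a_{ij}]_i}{\ell}$. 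The key numerical input is that $a_{00}=2$, so $[a_0(1), x_0^+(0)] = \frac{[2]_0}{1}\gamma^{-1/2} x_0^+(e_1)$, and the $q_0^{-2}$ twist in the bracket is exactly the weight factor arising when $x_0^+(0)$ is moved past $a_0(1)$. I expect that after applying the derivation-type identities \eqref{v1}--\eqref{v2} for the quantum $q$-bracket and substituting these relations, the expression collapses because the two terms produced by the Jacobi-type expansion cancel in view of \eqref{n:comm6}, which already asserts $[x_0^+(e_s), x_0^+(\underline 0)]_{q_0^{-2}}=0$ in the genuinely toroidal ($N\geq 2$) setting and its $N=2$ shadow. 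In effect \eqref{4.4} is a mild reformulation of \eqref{n:comm6} once $x_0^+(1)$ is unfolded, so the computation is a bookkeeping of scalar factors $q_0^{\pm 2}$ and $[2]_0$.

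The main obstacle I anticipate is not conceptual but the careful tracking of the half-integer powers of $\gamma$ and the $q_0$-weights as generators are commuted past one another; a sign or an exponent error in the $q$-bracket identities \eqref{v1}--\eqref{v2} would break the cancellation, so the real work is verifying that the weight $q_0^{-2}$ in the outer bracket matches the weight generated when $a_0(1)$ is commuted through $x_0^+(0)$ (this matching is forced by $a_{00}=2$, i.e. $q_0^{a_{00}}=q_0^2$). Once that matching is confirmed, I would invoke Lemma \ref{N:iso} to conclude that adjoining these relations does not enlarge the algebra, and then transport the result to $\epsilon=-1$ via $\iota$, completing the proof of \eqref{4.4}.
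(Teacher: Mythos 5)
Your overall setup (reduce to $\epsilon=+$, unfold $x_0^+(1)=[2]_0^{-1}\gamma^{1/2}[a_0(1),x_0^+(0)]$ via \eqref{4.3}) matches the paper's first step, but the core of your argument has a genuine gap. You claim the identity "collapses in view of \eqref{n:comm6}, which already asserts $[x_0^+(e_s),x_0^+(\underline 0)]_{q_0^{-2}}=0$," but \eqref{n:comm6} says no such thing: it reads $[\,x_0^{-\epsilon}(\epsilon e_s),\,x_0^{-\epsilon}(\underline 0)\,]_{q_0^{-2}}=0$, which for the $+$ superscript is the relation $[x_0^+(-e_s),x_0^+(\underline 0)]_{q_0^{-2}}=0$ involving the \emph{generator} $x_0^+(-e_s)$. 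The element $x_0^+(e_s)$ with the opposite sign pairing is not among the simplified generators of $\mathcal U_0$ — it is precisely the new element constructed in \eqref{4.3} — so \eqref{4.4} is a genuinely new relation, not a reformulation of \eqref{n:comm6}. Likewise, your appeal to \eqref{n:Dr5} is circular at this stage: \eqref{n:Dr5} is a defining relation of the target quotient algebra $\overline U_q(\mathfrak g_{N,tor})$, which is exactly what one is trying to \emph{derive} inside $\mathcal U_0$; moreover $[a_0(1),x_0^+(0)]$ is, by construction, just a scalar multiple of $x_0^+(1)$ again, so invoking it returns you to the starting point.

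The missing idea is the second substitution: one must also unfold $a_0(1)=K_0^{-1}\gamma^{1/2}[x_0^+(0),x_0^-(1)]$ from \eqref{4.1}, so that the whole expression is written in terms of the honest generators $x_0^+(0)$ and $x_0^-(1)$. After commuting $K_0^{-1}$ out (using $K_0x_0^+(0)K_0^{-1}=q_0^{2}x_0^+(0)$, which is where the twists $q_0^{-2}$ and $q_0^{-4}$ arise), one obtains
\begin{equation*}
[x_0^+(0),x_0^+(1)]_{q_0^{-2}}
=-q_0^{4}[2]_0^{-1}\gamma K_0^{-1}
\Bigl[x_0^+(0),\bigl[x_0^+(0),[x_0^+(0),x_0^-(1)]\bigr]_{q_0^{-2}}\Bigr]_{q_0^{-4}},
\end{equation*}
and the nested $q$-bracket on the right is exactly the expansion of the cubic Serre-type relation \eqref{n:comm11}, $\sum_{t=0}^{3}(-1)^{t}q_0^{-2t}\bigl[{3\atop t}\bigr]_0(x_0^{+}(\underline 0))^{3-t}x_0^{-}(e_s)(x_0^{+}(\underline 0))^{t}=0$. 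So the identity is quadratic only in appearance; the input it actually requires is cubic. Your concluding invocation of Lemma \ref{N:iso} is also out of place here — that lemma governs the global isomorphism strategy, not the verification of this single relation.
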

\begin{proof}\, We only check in the case of $\epsilon=+$ or $+1$, it is similar for the case of $\epsilon=-$ or $-1$. It follows from \eqref{4.3} and \eqref{4.1} that
\begin{eqnarray*}
[{x}_0^{+}(0),\, {x}_0^{+}(1)]_{q_0^{-2}}&=&\Big[x_0^{+}(0), [2]_0^{-1}\gamma^{\frac{1}{2}}\,\bigl[\,a_0(1),\,x_0^{+}(0)\,\bigr]\Big]\\
&=&-q_0^{4}[2]^{-1}\gamma K_0^{-1}\Bigl[x_0^{+}(0), \bigl[\,x_0^{+}(0),\, \bigl[\,x_0^+(0),\,x_0^-(1)\,\bigr]\bigr]_{q_0^{-2}}\Bigr]_{q_0^{-4}}=0,
\end{eqnarray*}
where we have used the relation \eqref{n:comm11}. \end{proof}

%Using the $q$-bracket,  the left hand side of \eqref{4.4} is exact that
%\begin{eqnarray*}
%&&\hbox{LHS of \eqref{4.11}}=\Big[x_0^{+}(0), \big[x_0^{+}(0), [x_0^{+}(0), x_{0}^{-}(-1)]_1\big]_{q^2}\Big]_{q^4}\\
%&=&-\gamma^{-1}\Big[x_0^{+}(0), \big[x_0^{+}(0), [x_0^{+}(-1), \,x_0^{-}(0)]\big]_{q^2}\Big]_{q^4}\\
%&=&-\gamma^{-1}\Big(\Big[x_0^{+}(0), \big[[x_0^{+}(0), x_0^{+}(-1)]_{q^{2}}, \,x_0^{-}(0)\big]_1\Big]_{q^4}+q^{2}
%\Big[x_0^{+}(0), \big[x_0^{+}(-1), [x_0^{+}(0), \,x_0^{-}(0)]\big]_{q^{-2}}\Big]_{q^4}\Big)\\
%&=&-q^{-2}[2]_0\gamma^{-1}[x_0^{+}(0), x_0^{+}(-1)]_{q^{2}}K_0^{-1}=0.
%\end{eqnarray*}

The following proposition gives key relations among the degree-$1$ elements $K_0^{\pm 1}$, $x_0^{\epsilon}(0)$, $a_0(\epsilon)$, $x_0^{\epsilon}(\epsilon)$ and $x_0^{\epsilon}(-\epsilon)$, which
are consistent with Definition \ref{defi-ntor}.

\begin{prop}\label{p1} Using the above notations, we have the following relations (as above and below $\epsilon=\pm$ or $\pm 1$):
\begin{align}\label{4.5}
%&K_i{a}_0(\epsilon)K_i^{-1}={a}_0(\epsilon),\qquad q^{d_1}{a}_0(\epsilon)q^{-d_1}=q^{\epsilon} {a}_0(\epsilon),
%\qquad q^{d_2}{a}_0(\epsilon)q^{-d_2}={a}_0(\epsilon),\\
&\bigl[\,{a}_0(\epsilon),\,{x}_0^{\epsilon}(-\epsilon)\,\bigr]=\epsilon [2]_0\gamma^{-\frac{\epsilon}{2}} {x}_0^{\epsilon}(0),\quad \bigl[\,{a}_0(-\epsilon),\,{x}_0^{\epsilon}(0)\,\bigr]=\epsilon [2]_0\gamma^{-\frac{\epsilon}{2}}{x}_0^{\epsilon}(-\epsilon),\\\label{4.6}
&\bigl[\,{a}_0(1), {a}_0(-1)\,\bigr]=[2]_0\frac{\gamma-\gamma^{-1}}{q_0-q_0^{-1}},\\\label{4.7}
&\bigl[\,{x}_0^+(\epsilon),\,{x}_0^-(0)\,\bigr]=\gamma \bigl[\,{x}_0^+(0),\,{x}_0^-(\epsilon)\,\bigr],\\\label{4.8}
&{a}_0(\epsilon)=\epsilon K_0^{-\epsilon}\gamma^{\frac{1}{2}\epsilon}\bigl[\,{x}_0^{\epsilon}(0),\,\,{x}_0^{-\epsilon}(\epsilon )\bigr]= \epsilon K_0^{-\epsilon}\gamma^{-\frac{1}{2}\epsilon} \bigl[\,{x}_0^{\epsilon}(\epsilon), {x}_0^{-\epsilon}(0)\bigr],\\\label{4.9}
%&K_i{x}_0^{\epsilon}(\epsilon)K_i^{-1}=q^{a_{i0}}{x}_0^{\epsilon}(\epsilon), \quad q^{d_1}{x}_0^{\epsilon}(\epsilon)q^{-d_1}=q^{\epsilon} {x}_0^{\epsilon}(\epsilon),\quad q^{d_2}{x}_0^{\epsilon}(\epsilon)q^{-d_2}=q {x}_0^{\epsilon}(\epsilon),\\
&[\,{x}_0^{\pm}(1),\,{x}_0^{\pm}(-1)\,]_{q_0^{\pm2}}+[\,{x}_0^{\pm}(0),\,{x}_0^{\pm}(0)\,]_{q_0^{\pm2}}=0,\\\label{4.10}
&\bigl[\,{a}_0(-\epsilon),\,{x}_0^{\epsilon}(\epsilon)\,\bigr]=\epsilon [2]_0\gamma^{-\frac{\epsilon}{2}} {x}_0^{\epsilon}(0),\\\label{4.11}
&[\,{x}_0^+(1),\,{x}_0^-(-1)\,]=\frac{\gamma K_0-\gamma^{-1} K_0^{-1}}{q_0-q_0^{-1}}.
\end{align}
\end{prop}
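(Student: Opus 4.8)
The plan is to establish the eight identities \eqref{4.5}--\eqref{4.11} by systematically exploiting the definitions \eqref{4.1}--\eqref{4.3} together with the bracket expansion rules \eqref{v1}--\eqref{v2} and the defining relations \eqref{n:comm0}--\eqref{n:comm13} of $\mathcal{U}_0(\mathfrak{g}_{2,tor})$. The governing principle is that every element of degree $\pm 1$ involving the index $0$ has already been expressed as an iterated $q$-bracket of the degree-$0$ generators $x_0^{\pm}(0)$ and the single degree-$\pm1$ element $x_0^{-\epsilon}(\epsilon)$; consequently each identity reduces, after substitution, to a consequence of the Serre-type relations \eqref{n:comm11}--\eqref{n:comm13} or the commutator relations \eqref{n:comm4}, \eqref{n:comm6}. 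I would handle one $\epsilon$ throughout (say $\epsilon=+$), invoking the anti-involution $\iota$ of the preceding proposition to obtain the $\epsilon=-$ case for free, exactly as was done in the proof of \eqref{4.4}.

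First I would treat the commutator-to-element identities \eqref{4.5} and \eqref{4.10}. These follow by substituting the definition \eqref{4.3} of $x_0^{\epsilon}(\epsilon)$ (and its analogue for $x_0^{\epsilon}(-\epsilon)$) into the left-hand bracket, then moving $a_0(\epsilon)$ across using the Jacobi-type rule \eqref{v1}; the cross terms vanish because $[a_0(\epsilon),a_0(\mp\epsilon)]$ contributes only a scalar and $[a_0(\epsilon),x_0^{\epsilon}(0)]$ reproduces $x_0^{\epsilon}(\epsilon)$ up to the factor $[2]_0\gamma^{\pm1/2}$. Next, \eqref{4.6} is the $\mathfrak{sl}_2$-type Heisenberg relation: I would compute $[a_0(1),a_0(-1)]$ by replacing $a_0(1)$ via \eqref{4.1}, expanding the nested bracket with \eqref{v2}, and collecting the terms $[x_0^+(0),x_0^-(0)]=\tfrac{K_0-K_0^{-1}}{q_0-q_0^{-1}}$ and $[x_0^+(-1),x_0^-(1)]$ supplied by \eqref{n:comm4}. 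The alternative expressions \eqref{4.8} for $a_0(\epsilon)$ then follow by comparing the two ways of writing $a_0(\epsilon)$ as a bracket and using the commutation of $K_0$ with the relevant modes.

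The remaining identities \eqref{4.7}, \eqref{4.9}, and \eqref{4.11} encode the degree-$\pm1$ structure of the subalgebra $U_q(\widehat{\mathfrak{sl}}_2)_0$. For \eqref{4.9} I would substitute \eqref{4.3} for both $x_0^{\pm}(1)$ and $x_0^{\pm}(-1)$, reducing the $q_0^{\pm2}$-bracket to a quadruple bracket in $x_0^{\pm}(0)$ and $x_0^{\mp}(\mp1)$, where the cubic Serre relation \eqref{n:comm11} (resp. \eqref{n:comm12}) forces cancellation. Identity \eqref{4.7} is the degree-consistency relation $[x_0^+(\epsilon),x_0^-(0)]=\gamma[x_0^+(0),x_0^-(\epsilon)]$, which I would derive exactly along the lines of the computation already displayed in the second Remark following Definition \ref{defi-ntor}, by expanding $[a_0(\epsilon),a_0(-\epsilon)]$ in two ways; the same double expansion yields \eqref{4.11} once the $\phi/\varphi$ definitions are unwound so that the right-hand side $\tfrac{\gamma K_0-\gamma^{-1}K_0^{-1}}{q_0-q_0^{-1}}$ appears.

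The main obstacle I anticipate is bookkeeping rather than conceptual: the iterated $q$-brackets produce many terms with shifting deformation parameters $q_0^{\pm2}, q_0^{\pm4}$, and one must apply \eqref{v1}--\eqref{v2} with precisely matched subscripts so that the unwanted terms genuinely cancel against the cubic Serre relations instead of merely resembling them. In particular, verifying that the quadruple-bracket expansion in \eqref{4.9} collapses exactly to \eqref{n:comm11} requires tracking the $[2]_0^{-1}$ normalizations and the $\gamma$-powers with care; once this single cancellation is pinned down, the companion relation by $\iota$ and the others follow by the same template. Throughout, Lemma \ref{N:iso} guarantees that checking these relations on the simplified generators suffices, so no separate closure argument for the full generating set is needed.
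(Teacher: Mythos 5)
Your overall strategy -- express the derived degree-$\pm1$ elements through \eqref{4.1}--\eqref{4.3}, expand with the $q$-bracket identities \eqref{v1}--\eqref{v2}, invoke the defining relations of $\mathcal{U}_0(\mathfrak{g}_{2,tor})$, and transport to the opposite sign by the anti-involution $\iota$ -- is exactly the paper's (which writes out only \eqref{4.9} and refers the remaining identities to the type-$A$ computations of \cite{JZ1}), and your plans for \eqref{4.5}--\eqref{4.8}, \eqref{4.10} and \eqref{4.11} would go through, provided you establish \eqref{4.5} first since \eqref{4.6}, \eqref{4.9} and \eqref{4.10} all consume it.

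However, your treatment of \eqref{4.9} -- the one identity the paper actually proves in detail -- contains a genuine error. First, for the $+$ case the element $x_0^{+}(-1)$ is a \emph{generator} of $\mathcal{U}_0(\mathfrak{g}_{2,tor})$, not a derived element: \eqref{4.3} only defines $x_0^{\epsilon}(\epsilon)$, so you cannot ``substitute \eqref{4.3} for both $x_0^{\pm}(1)$ and $x_0^{\pm}(-1)$.'' Second, and more seriously, the cancellation is not supplied by the cubic Serre relations \eqref{n:comm11}--\eqref{n:comm12}: after substituting $x_0^{+}(1)=[2]_0^{-1}\gamma^{1/2}[a_0(1),x_0^{+}(0)]$ and $a_0(1)=K_0^{-1}\gamma^{1/2}[x_0^{+}(0),x_0^{-}(1)]$, the resulting quadruple bracket contains both $x_0^{-}(1)$ and $x_0^{+}(-1)$ and never has the shape of \eqref{n:comm11} (three copies of $x_0^{\epsilon}(0)$ against one $x_0^{-\epsilon}(\epsilon)$); that relation is the mechanism behind \eqref{4.4}, not \eqref{4.9}. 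The correct input is the quadratic relation \eqref{n:comm6}, $[x_0^{+}(0),x_0^{+}(-1)]_{q_0^{2}}=0$: applying $\mathrm{ad}\,a_0(1)$ to it and using $[a_0(1),x_0^{+}(0)]=[2]_0\gamma^{-1/2}x_0^{+}(1)$ together with $[a_0(1),x_0^{+}(-1)]=[2]_0\gamma^{-1/2}x_0^{+}(0)$ from \eqref{4.5} yields \eqref{4.9} in one line, which is precisely the paper's argument. Two smaller points: the vanishing of a cross term $[a_0(\epsilon),a_0(-\epsilon)]$ (a scalar) is the mechanism for \eqref{4.10}, not for \eqref{4.5}, where instead \eqref{n:comm4} and \eqref{n:comm6} do the work; and Lemma \ref{N:iso} is not what licenses anything in this proposition -- it governs the global isomorphism in Theorem \ref{maintheo2-1}, not the internal identities being checked here.
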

\begin{proof} \, Most of the relations can be easily checked as in \cite{JZ1} for type A. Here we check relation \eqref{4.9} for the case of $+$, others can be verified by the construction directly.
Note that $[\,x_0^{+}(0),\,x_0^{+}(-1)\,]_{q^{2}}=0$ by \eqref{n:comm6}, then
\begin{equation*}
\begin{split}
%0&=[\,a_0(1),\,A_1\,]\\
0&=\Big[\,a_0(1),\,[\,x_0^{+}(0),\,x_0^{+}(-1)\,]_{q^{2}}\,\Big]\\
&=\Bigl(\Big[\,[a_0(1),\,x_0^{+}(0)],\,x_0^{+}(-1)\,\Big]_{q^{2}}+
\Big[\,x_0^{+}(0),\,[a_0(1),\,x_0^{+}(-1)\,]\,\Big]_{q^{2}}\Big)\\
&=[2]\gamma^{-\frac{1}{2}}\Big([\,x_0^{+}(1),\,x_0^{+}(-1)\,]_{q^{2}}
+[\,x_0^{+}(0),\,x_0^{+}(0)\,]_{q^{2}}\Big).
\end{split}
\end{equation*}
It means that $[\,x_0^{+}(1),\,x_0^{+}(-1)\,]_{q^{2}}
+[\,x_0^{+}(0),\,x_0^{+}(0)\,]_{q^{2}}=0$, which is consistent with the defining relation in Definition \ref{defi-ntor}.
\end{proof}

Now we construct all degree-$k$ elements $x_0^{\pm}(k),\,x_0^{\pm}(-k),\,a_0(\pm k)$ involving with index $i=0$ by inductively
 as follows.  For $\epsilon=\pm$ or $\pm1$, we set that
\begin{gather}\label{4.12}
x_{0}^{\pm}(\epsilon k)=\pm [2]_0^{-1}\gamma^{\pm\frac{1}{2}}\,\bigl[\,a_0(\epsilon),\,x_{0}^{\pm}(\epsilon (k-1))\,\bigr]\in
{\mathcal U}_0(\mathfrak{g}_{2,tor}),\\\label{4.13}
%x_{0}^{\pm}(-k-1)=\pm [2]^{-1}\gamma^{\pm\frac{1}{2}}\,\bigl[\,a_0(-1),\,x_{i}^{\pm}(-k)\,\bigr]\in
%{\mathcal U}_0,\\\label{a6}
{\phi}_{0}(k)=(q_0-q_0^{-1})\gamma^{\frac{2-k}{2}}\,\bigl[\,x_{0}^+(k-1),\,x_{0}^{-}(1)\,\bigr]\in
{\mathcal U}_0(\mathfrak{g}_{2,tor}),\\\label{4.14}
{\varphi}_{0}(-k)=-(q_0-q_0^{-1})\gamma^{\frac{k-2}{2}}\,\bigl[\,x_{0}^+(-1),\,x_{0}^{-}(-k+1)\,\bigr]\in
{\mathcal U}_0(\mathfrak{g}_{2,tor}),
\end{gather}
where $a_0(\pm k)$ are defined by ${\phi}_0(k)$ and ${\varphi}_0(-k)\, (k\geq 0)$ %such that ${\phi}_i(0)=K_i$ and  ${\varphi}_i(0)=K_i^{-1}$
as follows:
\begin{gather*}\sum\limits_{m=0}^{\infty}{\phi}_0(r) z^{-r}=K_0 \exp \Big(
(q_0{-}q_0^{-1})\sum\limits_{\ell=1}^{\infty}
 a_0(\ell)z^{-\ell}\Big), \\
\sum\limits_{r=0}^{\infty}{\varphi}_0(-r) z^{r}=K_0^{-1}\exp
\Big({-}(q_0{-}q_0^{-1})
\sum\limits_{\ell=1}^{\infty}a_0(-\ell)z^{\ell}\Big).
\end{gather*}

A partition of $k$, denoted $\lambda\vdash k$, is a non-increasing sequence of positive integers $\lambda_1\geq\lambda_2\geq\cdots\geq \lambda_l>0 $ such that $\lambda_1+\lambda_2+\cdots+\lambda_l=k$, where $l(\lambda)=l$ is called the number of parts.
A partition $\lambda=(\lambda_1\lambda_2\cdots)$ can also be denoted as $(1^{m_1}2^{m_2}\cdots)$ with multiplicity of $i$ being $m_i$.
Then we obtain the following formulas between $a_0(\pm k)$ and  ${\phi}_0(\pm k)$: %and ${\varphi}_0(-k)$:
 \begin{align}
 \label{4.15}
 {\phi}_0(k)&=K_0\sum\limits_{\lambda\vdash k}\frac{(q_0-q_0^{-1})^{l(\lambda)}}{m_{\lambda}!} a_0(\lambda),
 % &{\phi}_0(k)=K_0\sum\limits_{t=1}^{k}\frac{(q-q^{-1})^t}{t!}\sum \limits_{\substack{ 1\leqslant i_1,\cdots,i_t\leqslant k-t+1,\\
 % i_1+i_2+\cdots+i_t=k}}  a_0(i_1)\cdots  a_0(i_t),
  \\ \label{4.16}
 {\phi}_0(-k)&=K_0^{-1}\sum\limits_{\lambda\vdash k}\frac{(q_0^{-1}-q_0)^{l(\lambda)}}{m_{\lambda}!} a_0(-\lambda),
%&{\varphi}_0(-k)=K_0^{-1}\sum\limits_{t=1}^{k}\frac{(q^{-1}-q)^t}{t!}\sum\limits_{\substack {1\leqslant i_1,\cdots,i_t\leqslant k-t+1,\\
%  i_1+i_2+\cdots+i_t=k}}  a_0(-i_1)\cdots  a_0(-i_t).
\end{align}
where $m_{\lambda}!=\prod_{i\geq 1}m_i!$
and $ a_0(\pm\lambda)= a_0(\pm\lambda_1)a_0(\pm\lambda_2)\cdots.$
It is easy to see $\iota( \phi_0(k))=\varphi_0(-k)$.

By the inductive hypothesis on degree, assume that all degree-$n$ elements for $n\in \mathbb{N}$ satisfy the relevant relations in Definition \ref{defi-ntor}, now we are left to check that so do all degree-$(n+1)$ elements for $n\in \mathbb{N}$. The proof proceeds in the following propositions (cf. \cite[Sect. 4]{JZ1} for type $A$). % (Section 4) and can be checked for the general type similarly.
%\begin{prop} For $\epsilon=\pm$ or $\pm1$, one has that
%\begin{align}\label{4.17}
%&[\, a_0(2\epsilon),\,{x}_0^{\pm}(-\epsilon)\,]=\frac{[4]_0}{2}\gamma^{\mp1}{x}_0^{\pm}(\epsilon),\\
%\label{4.18}
%&[\, a_0(2\epsilon),\,{x}_0^{\pm}(0)\,]=\frac{[4]_0}{2}\gamma^{\mp1}{x}_0^{\pm}(2\epsilon),\\\label{4.19}
%&[\, a_0(2),\, a_0(-1)]=0,\\\label{4.20}
%&[\, a_0(2),\, a_0(-2)]=\frac{[4]_0}{2}\frac{\gamma^2-\gamma^{-2}}{q_0-q_0^{-1}},\\\label{4.21}
%&[\,{x}_0^{\pm}(2),\,{x}_0^{\pm}(-1)\,]_{q_0^{\pm2}}
%+[\,{x}_0^{\pm}(0),\,{x}_0^{\pm}(1)\,]_{q_0^{\pm2}}=0,\\\label{4.22}
%%&[\,{x}_0^{\epsilon}(\epsilon),\,{x}_0^{\epsilon}(0)\,]_{q_0^{2}}=0,\\
%&[\, a_0(1),\,{\phi}_0(n)\,]=0, \quad [\, a_0(-1),\,{\varphi}_0(-n)\,]=0.
%\end{align}
%\end{prop}

\begin{prop}\label{p10} From the above construction, we have the following relations for $\epsilon=\pm$ or $\pm1$, $-n+1\leqslant l_1\leqslant n-1$ and $-n\leqslant l_2\leqslant n$,
\begin{align}\label{4.23}
&K_i{x}_{0}^{\pm}(\epsilon(n+1))K_i^{-1}=q_i^{a_{i0}}{x}_{0}^{\pm}(\epsilon(n+1)),\\\label{4.24}
&q^{d}{x}_{0}^{\pm}(\epsilon(n+1))q^{-d}=q^{\epsilon(n+1)} {x}_{0}^{\pm}(\epsilon(n+1)),\\\label{4.25}
%&q^{d_2}{x}_{0}^{\pm}(\epsilon(n+1))q^{-d_2}=q {x}_{0}^{\pm}(\epsilon(n+1)),\\\label{b25}
&[\,{x}_{0}^{\pm}(n+1),\,{x}_{0}^{\pm}(n-1)\,]_{q_0^{\pm2}}+[\,{x}_{0}^{\pm}(n),\,{x}_{0}^{\pm}(n)\,]_{q_0^{\pm2}}=0,\\\label{4.26}
&[\,{x}_{0}^{\pm}(n+2),\,{x}_{0}^{\pm}(n-1)\,]_{q_0^{\pm2}}+[\,{x}_{0}^{\pm}(n),\,{x}_{0}^{\pm}(n+1)\,]_{q_0^{\pm2}}=0,\\
\label{4.27}
&[\,{x}_{0}^{\pm}(n+1),\,{x}_{0}^{\pm}(n)\,]_{q_0^{\pm2}}=0,\\
\label{4.28}
&[\,{x}_0^{\pm}(n+1),\,{x}_0^{\pm}(l_1)\,]_{q_0^{\pm2}}+[\,{x}_{0}^{\pm}(l_1+1),\,{x}_{0}^{\pm}(n)\,]_{q_0^{\pm2}}=0,\\
\label{4.29}
&[\,{x}_{0}^+(n+1),\,{x}_{0}^-(l_2)\,]=\frac{\gamma^{\frac{n+1-l}{2}}\phi_0(n+l_2+1)}{q_0-q_0^{-1}},\\\label{4.30}
&[\,{x}_0^+(n+1),\,{x}_0^-(-n-1)\,]=\frac{\gamma^{n+1} K_0-\gamma^{-n-1} K_0^{-1}}{q_0-q_0^{-1}}.
\end{align}
\end{prop}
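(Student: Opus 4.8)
The plan is to read Proposition \ref{p10} as the single inductive step of a spiral induction that manufactures the entire degree-$(n+1)$ layer of elements $x_0^{\pm}(\pm(n+1))$, $a_0(\pm(n+1))$, $\phi_0(n+1)$ and $\varphi_0(-(n+1))$ out of the degree-$\leqslant n$ data, following the affine type $A$ computation of \cite[Sect.~4]{JZ1} but retaining the relations that are genuinely toroidal. The raw material is the inductive definition \eqref{4.12}, which presents $x_0^{\pm}(\epsilon(n+1))$ as $\pm[2]_0^{-1}\gamma^{\pm\frac12}\,[\,a_0(\epsilon),x_0^{\pm}(\epsilon n)\,]$, together with the base commutation relations \eqref{4.5}--\eqref{4.11}, the derivation identities \eqref{v1}--\eqref{v2} for $q$-brackets, and the inductive hypothesis that all of \eqref{4.23}--\eqref{4.30} (and the $a_0$-action rule \eqref{n:Dr5}) hold among elements of degree $\leqslant n$. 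At the outset I would invoke the anti-involution $\iota$ of $\mathcal{U}_0(\mathfrak{g}_{2,tor})$ recalled at the start of this section: since $\iota$ sends $x_0^+(k)\mapsto x_0^-(-k)$, $\gamma^{\frac12}\mapsto\gamma^{-\frac12}$, $q\mapsto q^{-1}$ and preserves the shape of every defining relation, each identity in the list only has to be established for one value of the sign $\epsilon$, the other following by applying $\iota$.

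I would first dispose of the weight relations \eqref{4.23} and \eqref{4.24}, which need no bracket identity. They drop out of \eqref{4.12}: the element $a_0(\epsilon)$ commutes with every $K_i$ and carries a definite $q^d$-weight, while $x_0^{\pm}(\epsilon n)$ has the asserted $K_i$- and $q^d$-weights by the inductive hypothesis, so conjugating the bracket $[\,a_0(\epsilon),x_0^{\pm}(\epsilon n)\,]$ merely adds the two weights.

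The core of the argument is the interlocked family \eqref{4.25}--\eqref{4.28}, which is the mode form of the current relation \eqref{n:Dr7-2} (equivalently \eqref{6.10}) for $i=j=0$ together with its shifted companions. The idea is to raise each relation by one degree: apply $\ad a_0(\pm1)$ to its degree-$n$ instance and expand with \eqref{v1}--\eqref{v2}, then use the action formulas \eqref{4.5}, \eqref{4.10} (and their higher-mode analogues, supplied by the inductive hypothesis) to rewrite each $[\,a_0(\pm1),x_0^{\pm}(m)\,]$ as a scalar multiple of $x_0^{\pm}(m\pm1)$, thereby transporting the bracket of currents from degree $n$ to degree $n+1$. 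The delicate point, which I expect to be the main obstacle, is that these relations are genuinely coupled: raising one of the two-term relations \eqref{4.25}, \eqref{4.26} typically produces a combination that also involves the adjacency relation \eqref{4.27} at the new degree, and \eqref{4.27} only emerges after the two-term identities are combined. I would therefore fix a definite order — establishing \eqref{4.25}, \eqref{4.26}, \eqref{4.28} first and extracting \eqref{4.27} from them — and verify that no degree-$(n+1)$ relation is used before it has been proved, so that the induction is not circular.

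Finally, the mixed relations \eqref{4.29} and \eqref{4.30} are obtained by substituting \eqref{4.12} for $x_0^+(n+1)$, commuting $a_0(1)$ past $x_0^-(l_2)$ via \eqref{4.29} at lower degree and the $a_0$--$x^-$ action rule \eqref{n:Dr5}, and collecting the resulting terms. The right-hand side is reassembled into $\phi_0(n+l_2+1)$ through the partition expansion \eqref{4.15}--\eqref{4.16} relating $\phi_0(k)$ to products of the $a_0(\ell)$, and the boundary case $l_2=-(n+1)$ of \eqref{4.30} is exactly where $\phi_0(0)=K_0$ surfaces to give the central Cartan term. Throughout this last step, $\iota$ reduces the statements about $\varphi_0$ to those about $\phi_0$ already proved, completing the inductive step.
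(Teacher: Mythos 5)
Your overall strategy --- induction on the degree, transporting relations upward by bracketing with Heisenberg elements, and using the anti-involution $\iota$ to halve the sign cases --- is the paper's strategy, and your treatment of \eqref{4.23}--\eqref{4.24} (weights add under \eqref{4.12}) and of \eqref{4.29}--\eqref{4.30} (substitute \eqref{4.12}, push $a_0(1)$ through, reassemble $\phi_0$ via the partition expansion \eqref{4.15}--\eqref{4.16}) matches what the paper does. But there is a genuine gap at exactly the step you flag as delicate, and your proposed fix (reordering so that \eqref{4.25}, \eqref{4.26}, \eqref{4.28} come first) does not close it. If you only ever apply $\mathrm{ad}\,a_0(\pm1)$, then bracketing $a_0(1)$ with the degree-$n$ adjacency relation $[x_0^{+}(n),x_0^{+}(n-1)]_{q_0^{2}}=0$ yields \eqref{4.25}, and bracketing $a_0(1)$ with \eqref{4.25} yields the single linear equation $A+2B+C=0$, where $A=[x_0^{+}(n+2),x_0^{+}(n-1)]_{q_0^{2}}$, $B=[x_0^{+}(n+1),x_0^{+}(n)]_{q_0^{2}}$, $C=[x_0^{+}(n),x_0^{+}(n+1)]_{q_0^{2}}$. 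Because the $q$-bracket is not antisymmetric, $B$ and $C$ are independent unknowns, so one equation in three unknowns gives neither \eqref{4.26} (which is $A+C=0$) nor \eqref{4.27} (which is $B=0$); and \eqref{4.26} itself cannot be produced by $a_0(\pm1)$ acting on any lower-degree relation, since $a_0(\pm1)$ only shifts modes by one.

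The missing ingredient is the degree-two Heisenberg element $a_0(2)$, whose adjoint action shifts modes by $2$: applying $\mathrm{ad}\,a_0(2)$ to $[x_0^{+}(n),x_0^{+}(n-1)]_{q_0^{2}}=0$ produces precisely the independent equation $A+C=0$, i.e.\ \eqref{4.26}, after which the $a_0(1)$-computation collapses to $2[2]_0\gamma^{-1/2}B=0$ and gives \eqref{4.27}. This is why the proposition's list contains the ``look-ahead'' relation \eqref{4.26}, which reaches up to $x_0^{+}(n+2)$: it is not an end in itself but the auxiliary identity that decouples the system at degree $n+1$. (Note that $a_0(2)$ and its action on modes of degree $\leqslant n$ are already available from the inductive hypothesis via \eqref{4.13}--\eqref{4.16} and Proposition \ref{p11}, so invoking it does not make the induction circular.) With this one addition your outline becomes the paper's proof.
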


\begin{proof} We only check  the ``+'' case, as the ``-'' case can be obtained similarly. \eqref{4.23} and \eqref{4.24} hold directly by the construction.
The proof of relation \eqref{4.25} is similar to that of \eqref{4.26}. Let's consider %Here we only check
\eqref{4.26}, by the inductive hypothesis,
 take the bracket of $a_0(2)$ and $[\,x_0^{+}(n),\,x_0^{+}(n-1)\,]_{q^{2}}=0$, then
\begin{equation*}
\begin{split}
0&=\Big[\,a_0(2),\,[\,x_0^{+}(n),\,x_0^{+}(n-1)\,]_{q_0^{2}}\,\Big]\\
&=\Bigl(\Big[\,[a_0(2),\,x_0^{+}(n)],\,x_0^{+}(n-1)\,\Big]_{q_0^{2}}+
\Big[\,x_0^{+}(n),\,[a_0(2),\,x_0^{+}(n-1)\,]\,\Big]_{q_0^{2}}\Big)\\
&=\frac{[4]_0}{2}\gamma^{-1}\Big([\,x_0^{+}(n+2),\,x_0^{+}(n-1)\,]_{q_0^{2}}
+[\,x_0^{+}(n),\,x_0^{+}(n+1)\,]_{q_0^{2}}\Big),
\end{split}
\end{equation*}
which implies $$[\,x_0^{+}(n+2),\,x_0^{+}(n-1)\,]_{q_0^{2}}
+[\,x_0^{+}(n),\,x_0^{+}(n+1)\,]_{q_0^{2}}=0.$$

In a similar manner, we can prove relations
\eqref{4.27} and \eqref{4.28}.
Note that by \eqref{4.25}$$[\,x_0^{+}(n+1),\,x_0^{+}(n-1)\,]_{q_0^{2}}
+[\,x_0^{+}(n),\,x_0^{+}(n)\,]_{q_0^{2}}=0.$$  Therefore, it is easy to see that
\begin{equation*}
\begin{split}
0&=\Big[\,a_0(1),\,[\,x_0^{+}(n+1),\,x_0^{+}(n-1)\,]_{q_0^{2}}
+[\,x_0^{+}(n),\,x_0^{+}(n)\,]_{q_0^{2}}\,\Big]\\
&=[2]_0\gamma^{-\frac{1}{2}}\Bigl([x_0^{+}(n+2),x_0^{+}(n-1)]_{q_0^{2}}+
[x_0^{+}(n+1), x_0^{+}(n)]_{q_0^{2}}\\
&\hskip1.85cm+[x_0^{+}(n+1), x_0^{+}(n)]_{q_0^{2}}
+[x_0^{+}(n), x_0^{+}(n+1)]_{q_0^{2}}\Bigr)\\
&=2[2]_0\gamma^{-\frac{1}{2}}[\,x_0^{+}(n+1),\,x_0^{+}(n)\,]_{q_0^{2}},\\
\end{split}
\end{equation*}
where we have used \eqref{4.26}.  It yields that $[\,x_0^{+}(n+1),\,x_0^{+}(n)\,]_{q_0^{2}}=0$.

%
%
%To check \eqref{b28}, note that for $-n+1\leqslant l_1\leqslant n-1$, $$A_5=[\,x_0^{+}(n),\,x_0^{+}(l_1)\,]_{q^{2}}+[\,x_0^{+}(l_1+1),\,x_0^{+}(n)\,]_{q^{2}}.$$
%It holds that $A_5=0$ by inductive hypothesis. Therefore we have that
%\begin{equation*}
%\begin{split}
%0&=[\,a_0(1),\,A_5\,]\\
%&=\Big[\,a_0(1),\,[\,x_0^{+}(n),\,x_0^{+}(l_1)\,]_{q^{2}}+[\,x_0^{+}(l_1+1),\,x_0^{+}(n)\,]_{q^{2}}\,\Big]\\
%&=[2]\gamma^{-\frac{1}{2}}\Big([\,x_0^{+}(n+1),\,x_0^{+}(l_1)\,]_{q^{2}}
%+[\,x_0^{+}(l_1+1),\,x_0^{+}(n)\,]_{q^{2}}\Big),
%\end{split}
%\end{equation*}
%which yields \eqref{b28}.

In order to check \eqref{4.29}, we consider that for $-n\leq l_2\leq n$,
\begin{align*}
&[\,x_{0}^+(n+1),\,x_0^-(l_2)\,]\\
&=[2]_0^{-1}\gamma^{\frac{1}{2}}\,\Bigl(\Big[\,\bigl[\,a_0(1),\,x_{0}^-(l_2)\,\bigr],\,
x_{0}^+(n)\,\Big]
+\Big[\,a_0(1),\,\bigl[\,x_{0}^+(n),\,x_{0}^-(l_2)\,\bigr]\,\Big]\,\Bigr)\\
&=-\gamma\,\bigl[\,x_0^-(l_2+1),\,x_{0}^+(n)\,\bigr]=\frac{\gamma^{\frac{n+1-l_2}{2}}\phi_0(n+l_2+1)}{q_0-q_0^{-1}}.
\end{align*}

 For \eqref{4.30}, one has that
\begin{equation*}
\begin{split}
&[\,x_0^+(n+1),\,x_0^-(-n-1)\,]\\
%&=-[2]^{-2}\,\Big[\,\bigl[\,a_0(1),\,x_0^+(n)\,\bigr],\,\bigl[\,a_0(-1),\,x_0^-(-n)\,\bigr]\,\Big]\\
&=-[2]_0^{-2}\Bigl(\Big[\bigl[a_0(1), [x_0^+(n), a_0(-1)] \bigr],\,x_0^-(-n)\Big]
+\Bigl[a_0(-1), \bigl[[a_0(1), x_0^-(-n)], x_0^+(n)\,\bigr]\Bigr]\Bigr)\\
%&=-[2]^{-1}\Bigl(\gamma^{-\frac{1}{2}}\Big[\bigl[a_0(1), x_0^+(n-1)\bigr], x_0^-(-n)\Big]
%+\gamma^{\frac{1}{2}}\Big[a_0(-1), \bigl[x_0^-(-n+1), x_0^+(n)\,\bigr]\Big]\Bigr)\\
%&=\Bigl(\gamma^{-1}\frac{\gamma^n K_0-\gamma^{-n}K_0^{-1}}{q-q^{-1}}+\gamma^nK_0\frac{\gamma-\gamma^{-1}}{q-q^{-1}}\Bigr)\\
&=\frac{\gamma^{n+1} K_0-\gamma^{-n-1} K_0^{-1}}{q_0-q_0^{-1}}.
\end{split}
\end{equation*}
\end{proof}

\begin{prop} \label{p11'} The following relations hold for $d=\gamma^{-\frac{1}{2}}q_0^2$,
\begin{align}\label{4.31}
&[\,\bar{{\phi}}_0(r),\,{x}_{0}^{+}(m)\,]\\\nonumber
&\hskip0.5cm=[2]_0\gamma^{-\frac{1}{2}}\Bigl(\sum\limits_{t=1}^{r-1}(q_0-q_0^{-1}){d}^{t-1}{x}_{0}^{+}(m+t)\bar{{\phi}}_0(r-t)
+d^{r-1}{x}_{0}^{+}(r+m)\Bigr),\\\label{4.32}
&[\,\bar{{\varphi}}_0(-r),\,{x}_{0}^{-}(-m)\,]\\\nonumber
&\hskip0.5cm=-[2]_0\gamma^{-\frac{1}{2}}\Bigl(\sum\limits_{t=1}^{r-1}(q_0-q_0^{-1}){d}^{t-1}\bar{{\varphi}}_0(-r+t){x}_{0}^{-}(-m-t)
+d^{r-1}{x}_{0}^{-}(-r-m)\Bigr).
\end{align}
\end{prop}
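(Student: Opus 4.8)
The plan is to establish \eqref{4.31} by induction on $r$ and to obtain \eqref{4.32} by the parallel induction on the negative modes (equivalently, via the anti-involution $\iota$, under which $\bar{\phi}_0(r)$ is interchanged with $\bar{\varphi}_0(-r)$ and $x_0^{+}(k)$ with $x_0^{-}(-k)$). The engine of the whole argument is the elementary bracket
\[
[\,a_0(t),\,x_0^{+}(m)\,]=\frac{[\,2t\,]_0}{t}\,\gamma^{-\frac{t}{2}}\,x_0^{+}(m+t),
\]
which holds for the elements constructed so far by the inductive hypothesis on the degree (it is the $i=j=0$ case of \eqref{n:Dr5}, recalling that $a_{00}=2$), together with the fact that the positive imaginary modes $a_0(t)$ commute among themselves by \eqref{n:Dr3}, so that the products below may be reordered freely.

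First I would record the recursion satisfied by the modes $\bar{\phi}_0(r)$. Applying the logarithmic derivative $z\partial_z$ to their exponential generating function and comparing coefficients of $z^{-r}$ gives
\[
r\,\bar{\phi}_0(r)=(q_0-q_0^{-1})\sum_{t=1}^{r}t\,a_0(t)\,\bar{\phi}_0(r-t),\qquad \bar{\phi}_0(0)=1 .
\]
This expresses $\bar{\phi}_0(r)$ as a combination of the products $a_0(t)\bar{\phi}_0(r-t)$ with $r-t<r$, so that the bracket $[\,\bar{\phi}_0(r),\,x_0^{+}(m)\,]$ is reduced to brackets already under control. The base case $r=1$ reads $\bar{\phi}_0(1)=(q_0-q_0^{-1})a_0(1)$, for which \eqref{4.31} is immediate from the engine bracket in its lowest instances \eqref{4.5} and \eqref{4.10} of Proposition \ref{p1}.

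For the inductive step I would apply the Leibniz rule to each term produced by the recursion,
\[
[\,a_0(t)\,\bar{\phi}_0(r-t),\,x_0^{+}(m)\,]=[\,a_0(t),\,x_0^{+}(m)\,]\,\bar{\phi}_0(r-t)+a_0(t)\,[\,\bar{\phi}_0(r-t),\,x_0^{+}(m)\,],
\]
evaluating the first bracket by the engine relation and expanding the second by the inductive hypothesis \eqref{4.31} at level $r-t$. In the second summand $a_0(t)$ must then be commuted back past the shifted currents produced by the hypothesis, again through the engine bracket, which is where the factors assembling into the shift parameter $d=\gamma^{-\frac12}q_0^{2}$ arise. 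I expect this bookkeeping to be the main obstacle: after all substitutions one is left with a double sum in which the powers of $d$ and the quantum integers $[\,2t\,]_0/t$ are spread across many terms, and the content of \eqref{4.31} is precisely that this collapses into the single sum $\sum_{t=1}^{r-1}(q_0-q_0^{-1})d^{\,t-1}x_0^{+}(m+t)\bar{\phi}_0(r-t)+d^{\,r-1}x_0^{+}(r+m)$ carrying one overall factor $[2]_0\gamma^{-\frac12}$. Verifying this telescoping, while tracking the central factors $\gamma^{\pm\frac12}$ absorbed into $d$, is the only genuinely computational point.

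Finally, \eqref{4.32} is obtained by running the identical induction with $a_0(-t)$, $x_0^{-}(-m)$ and $\bar{\varphi}_0(-r)$ replacing $a_0(t)$, $x_0^{+}(m)$ and $\bar{\phi}_0(r)$, the engine bracket now being the negative-mode instance of \eqref{n:Dr5}; equivalently one applies the anti-involution $\iota$ to \eqref{4.31} and uses $\iota(\bar{\phi}_0(r))=\bar{\varphi}_0(-r)$, which reverses each commutator and each product and reproduces the stated relation.
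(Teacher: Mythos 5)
Your route is genuinely different from the paper's, and it has a circularity that prevents the induction from closing. You drive the induction on $r$ with the recursion $r\,\bar{\phi}_0(r)=(q_0-q_0^{-1})\sum_{t=1}^{r}t\,a_0(t)\,\bar{\phi}_0(r-t)$, whose top term $t=r$ forces you to evaluate $[\,a_0(r),\,x_0^{+}(m)\,]$ at the \emph{new} degree $r$. But at the point in the spiral argument where Proposition \ref{p11'} is invoked, that bracket is not part of the inductive hypothesis: for the top degree it is precisely relation \eqref{4.33} of Proposition \ref{p11}, which the paper proves \emph{afterwards and using} Proposition \ref{p11'}. You cannot appeal to \eqref{n:Dr5} either, since that is a defining relation of the target algebra $\overline{U}_{q}(\mathfrak g_{N,tor})$ — the whole point of Step 1 is to \emph{verify} it for the elements constructed inside $\mathcal U_0(\mathfrak g_{2,tor})$. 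Nor can the loop be closed by interleaving: expressing $[\,a_0(r),\,x_0^{+}(m)\,]$ back through \eqref{4.15} reproduces the same linear relation between $[\,a_0(r),\cdot\,]$ and $[\,\bar\phi_0(r),\cdot\,]$, so the two equations do not determine either bracket. An independent computation of one of them is required.

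The paper supplies exactly that independent input by a different decomposition: it uses the definition \eqref{4.13}, $\bar\phi_0(r)\propto K_0^{-1}[\,x_0^{+}(r-1),\,x_0^{-}(1)\,]$, and the $q$-Jacobi identities \eqref{v1}--\eqref{v2} to convert $[\,\bar\phi_0(r),\,x_0^{+}(m)\,]$ into $[\,x_0^{+}(r-1),[x_0^{-}(1),x_0^{+}(m)]\,]$ plus $[[x_0^{+}(r-1),x_0^{+}(m)]_{q_0^2},x_0^{-}(1)]$, which yields a \emph{downward} recursion $[\,\bar\phi_0(r),x_0^{+}(m)\,]=d\bigl([\,\bar\phi_0(r-1),x_0^{+}(m+1)\,]+\cdots\bigr)$ touching only relations already established in Proposition \ref{p10}. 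Your argument would become correct if you replaced the generating-function recursion by this commutator realization (or otherwise first proved the degree-$r$ instance of the engine bracket without using \eqref{4.31}); as written, the $t=r$ term is a genuine gap. The reduction of \eqref{4.32} to \eqref{4.31} via the anti-involution $\iota$ is fine in principle, but it inherits the same problem.
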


\begin{proof} Here we only check relation \eqref{4.31} for the case of $m=-1$, other cases are similar.
\begin{align*}
&[\,\bar{\phi}_0(n+1),\,x_{0}^{+}(-1)\,]\\
&=\gamma^{\frac{1-n}{2}}K_0^{-1}\Bigl(\bigl[\,x_0^+(n),[x_0^-(1),\,x_{0}^{+}(-1)]\bigr]_{q_0^{2}}+
\bigl[[\,x_0^+(n),\,x_{0}^{+}(-1)]_{q_0^{2}},x_0^-(1)\bigr]\Bigr)\\
%&=\gamma^{-\frac{n+1}{2}}[2]x_{0}^{+}(n)-\gamma^{-\frac{n+1}{2}}[2]x_{0}^{+}(n)
%-\gamma^{-\frac{1}{2}}\frac{K_0^{-1}}{q-q^{-1}}[\,x_0^+(0),\phi_0(n)]_{q^{2}}\\
%&=d\,[\,\bar{\phi}_0(n),\,x_0^+(0)]_{q^{-4}}\\
&=d\,\Bigl([\,\bar{\phi}_0(n),\,x_0^+(0)]+(q_0-q_0^{-1})q_0^{-2}[2]_0x_0^+(0)\bar{\phi}_0(n)\Bigl)=\cdots\\
%&=d^2\,\Bigl([\,x_0^+(0),\bar{\phi}_0(n)]+(q-q^{-1})q^{-2}[2]x_0^+(1)\bar{\phi}_0(n-1)\Bigr)\\
%&\hskip1cm+d(q-q^{-1})q^{-2}[2]x_0^+(0)\bar{\phi}_0(n)\\
&=d^n\,\Bigl([\,\bar{\phi}_0(1),\,x_0^+(n-1)]+(q_0-q_0^{-1})q_0^{-2}[2]_0x_0^+(n-1)\bar{\phi}_0(1)\Bigl)\\
&\hskip1cm+\sum\limits_{t=1}^{n-1}{d}^t(q_0-q_0^{-1})q_0^{-2}[2]_0x_0^+(t-1)\bar{\phi}_0(n+1-t)\\
%&=\sum\limits_{t=1}^{n}{d}^t(q-q^{-1})q^{-2}[2]x_0^+(t-1)\bar{\phi}_0(n+1-t)+d^n[2]\gamma^{-\frac{1}{2}}x_0^+(n)\\
&=[2]_0\gamma^{-\frac{1}{2}}\sum\limits_{t=1}^{n}{d}^{t-1}(q_0-q_0^{-1})x_0^+(t-1)\bar{\phi}_0(n+1-t)+d^nx_0^+(n).
\end{align*}
\end{proof}

\begin{prop} \label{p11} The following relations hold for $\epsilon=\pm$ or $\pm1$,
\begin{align}\label{4.33}
&[\,{a}_{0}(\epsilon(n+1)),\,{x}_{0}^{\epsilon}(-\epsilon)\,]=\frac{[2(n+1)]_0}{n+1}\gamma^{-\frac{n+1}{2}}{x}_{0}^{\epsilon}(\epsilon n),\\\label{4.34}
&[\,{a}_{0}(\epsilon(n+1)),\,{x}_{0}^{-\epsilon}(-\epsilon n)\,]=\frac{[2(n+1)]_0}{n+1}\gamma^{-\frac{n+1}{2}}{x}_{0}^{-\epsilon}(\epsilon),\\\label{4.35}
&[\,{a}_{0}(n+1),\,{a}_0(-n-1)\,]=\frac{[2(n+1)]_0}{n+1}\frac{\gamma^{n+1}-\gamma^{-(n+1)}}{q_0-q_0^{-1}}.
\end{align}
\end{prop}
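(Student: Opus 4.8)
The plan is to prove the three identities simultaneously by induction on $n$, viewing them as the index-$0$ Heisenberg--current relations inside the subalgebra $U_q(\widehat{\mathfrak{sl}}_2)_0$. At each stage the inductive hypothesis supplies all lower-degree instances of Propositions \ref{p1}, \ref{p10} and \ref{p11'}, as well as the explicit ladder definitions \eqref{4.12}--\eqref{4.14} of the degree-$(n+1)$ elements. Since the anti-involution $\iota$ reverses each bracket and satisfies $\iota(\phi_0(k))=\varphi_0(-k)$, it interchanges the $\epsilon=+$ and $\epsilon=-$ forms of \eqref{4.33}--\eqref{4.35}; hence it suffices to establish each relation for $\epsilon=+$.

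For \eqref{4.33} I would first isolate the single mode $a_0(n+1)$ from the current $\phi_0(n+1)$ using the partition expansion \eqref{4.15}: writing $\bar\phi_0(n+1)=K_0^{-1}\phi_0(n+1)$, the partition $(n+1)$ contributes the term $(q_0-q_0^{-1})\,a_0(n+1)$, while every $\lambda\vdash n+1$ with $l(\lambda)\geq 2$ contributes a product $a_0(\lambda_1)\cdots a_0(\lambda_l)$ all of whose parts have degree $\leq n$. Commuting with $x_0^+(-1)$, the left-hand side is evaluated explicitly through Proposition \ref{p11'} (relation \eqref{4.31} at $m=-1$), whereas each product term is computed by the Leibniz rule together with the inductive lower-degree form of \eqref{4.33}. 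Solving the resulting linear relation for $[\,a_0(n+1),x_0^+(-1)\,]$ yields the stated identity.

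Relation \eqref{4.34} is obtained by the parallel argument applied to the lowering side, now extracting $a_0(n+1)$ from $\phi_0$ and commuting against $x_0^-(-n)$, using the $\bar\varphi_0$-form \eqref{4.32} of Proposition \ref{p11'} together with the inductive hypothesis; the $\epsilon=-$ case then follows by $\iota$. For the central relation \eqref{4.35} I would express $a_0(n+1)$ as a bracket of ladder elements, propagating the degree-$1$ formula \eqref{4.8} to write $a_0(n+1)$ in terms of $[\,x_0^+(n),x_0^-(1)\,]$ up to lower modes via \eqref{4.13}, and then commute $a_0(-n-1)$ through this bracket. By the Jacobi identity the two inner commutators $[\,a_0(-n-1),x_0^+(n)\,]$ and $[\,a_0(-n-1),x_0^-(1)\,]$ are supplied by the $\epsilon=-$ cases of \eqref{4.33}--\eqref{4.34}; what survives is a single bracket of the form $[\,x_0^+(\cdot),x_0^-(\cdot)\,]$, which collapses to the central term by the mode relations \eqref{4.29}--\eqref{4.30}, and reassembling the powers of $\gamma$ and the $K_0^{\pm1}$ contributions produces exactly $\frac{[2(n+1)]_0}{n+1}\frac{\gamma^{n+1}-\gamma^{-(n+1)}}{q_0-q_0^{-1}}$.

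The main obstacle is not the structural scheme but the coefficient bookkeeping. One must show that the lower-degree products arising from the partition sum \eqref{4.15}, after commutation, telescope so that only the clean coefficient $\frac{[2(n+1)]_0}{n+1}$ and the single power $\gamma^{-\frac{n+1}{2}}$ survive; this amounts to inverting, at each step, the $q$-exponential relating $a_0$ to $\phi_0$. The verification rests on standard $q$-number identities for $[2k]_0$ and on careful tracking of the powers of $\gamma$ and of the shift parameter $d=\gamma^{-1/2}q_0^2$ throughout the recursion.
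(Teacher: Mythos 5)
Your proposal follows essentially the same route as the paper's proof: isolate $a_0(n+1)$ from $\bar\phi_0(n+1)$ via the partition expansion \eqref{4.15}, evaluate $[\bar\phi_0(n+1),x_0^+(-1)]$ through Proposition \ref{p11'}, handle the lower-degree product terms by the Leibniz rule and induction, and for \eqref{4.35} expand one of the $a_0(\pm(n+1))$ as a bracket of ladder elements and apply the Jacobi identity together with \eqref{4.33}--\eqref{4.34} (the paper expands $a_0(-n-1)$ via $[x_0^+(-1),x_0^-(-n)]$ rather than $a_0(n+1)$ via $[x_0^+(n),x_0^-(1)]$, a mirror-image choice of no consequence). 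The coefficient bookkeeping you defer is exactly the computation the paper carries out, so the plan is correct and matches the published argument.
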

\begin{proof} %We also use the case of $\epsilon=+$ to show how to prove these relations.
Note that \eqref{4.34} is similar to \eqref{4.33}, so
we only deal with \eqref{4.33}. It follows from Proposition \ref{p11'} that
\begin{equation*}
\begin{split}
&[\,a_{0}(n+1),\,x_{0}^{+}(-1)\,]\\
&=\bigl[\bar{\phi}_0(n+1),\,x_{0}^{+}(-1)\bigr]-\frac{(q-q^{-1})}{n+1}\sum\limits_{t=1}^{n}t\bigl[\bar{\phi}_0(n+1-t)a_0(t),\,x_{0}^{+}(-1)\bigr]\\
&=\frac{[2(n+1)]_0}{(n+1)!}\gamma^{-\frac{n+1}{2}}x_{0}^{+}(n).
\end{split}
\end{equation*}

Notice that by \eqref{4.15},
\begin{equation*}
\begin{split}
&-\gamma^{-\frac{1}{2}}\frac{K_0^{-1}}{q_0-q_0^{-1}}[\,x_0^+(0),\phi_0(n)]_{q_0^{2}}=%-\gamma^{-\frac{1}{2}}\frac{q^{-2}}{q-q^{-1}}\bigl([\,x_0^+(0), (a_0(n))]_{q^4}
%-\gamma^{-\frac{1}{2}}\frac{(q-q^{-1})^{n-1}}{n!}[\,x_0^+(0), (a_0(1))^n]_{q^4}\bigl)\\
-\gamma^{-\frac{1}{2}}q_0^{-2}
\sum\limits_{\substack{\lambda\vdash n}}\frac{(q_0-q_0^{-1})^{l(\lambda)-1}}{m_{\lambda}!}\bigl[\,x_0^+(0),\,a_0(\lambda)\bigr]_{q_0^{4}}
\end{split}
\end{equation*}
As a consequence, one has that
\begin{equation*}
\begin{split}
&[\,a_{0}(n+1),\,x_{0}^{+}(-1)\,]\\
&=-\gamma^{-\frac{1}{2}}q_0^{-2}
\sum\limits_{t=1}^{n}\frac{(q_0-q_0^{-1})^{t-1}}{t!}\sum \limits_{\substack{ 1\leqslant i_1,\cdots,i_t\leqslant n-t+1,\\
  i_1+i_2+\cdots+i_t=n}} [\,x_0^+(0), a_0(i_1)\cdots a_0(i_t)]_{q^{4}}\\
&\hskip0.55cm-\sum\limits_{t=2}^{n+1}\frac{(q_0-q_0^{-1})^{t-1}}{t!}\sum \limits_{\substack{ 1\leqslant i_1,\cdots,i_t\leqslant n-t+2,\\
  i_1+i_2+\cdots+i_{t}=n+1}}\Bigl[a_0(i_1)\cdots a_0(i_{t}),\,x_{0}^{+}(-1)\Bigr]=\frac{[2(n+1)]_0}{(n+1)!}\gamma^{-\frac{n+1}{2}}x_{0}^{+}(n).
\end{split}
\end{equation*}
%To show the idea, we consider the case of $n=2$ for example. The general case can be proved similarly. In fact, we have that
%\begin{equation*}
%\begin{split}
%&[\,a_{0}(3),\,x_{0}^{+}(-1)\,]\\
%&=-\gamma^{-\frac{1}{2}}q^{-2}\Bigl([x_0^+(0), a_0(2)]_{q^{4}}+\frac{(q-q^{-1})}{2}[x_0^+(0), (a_0(1))^2]_{q^{4}}\Bigr)\\
%&\hskip0.45cm-(q-q^{-1})[2]\gamma^{-\frac{1}{2}}x_0^+(0)a_0(2)-(q-q^{-1})\frac{[4]}{2}\gamma^{-1}a_0(1)x_0^+(1)\\
%&\hskip0.45cm-\frac{(q-q^{-1})^2}{3!}[2]\gamma^{-\frac{1}{2}}\bigl(x_0^+(0)(a_0(1))^2+a_0(1)x_0^+(0)a_0(1)+(a_0(2))^2x_0^+(0)\bigr)\\
%&=-\gamma^{-\frac{1}{2}}q^{2}[x_0^+(0), a_0(2)]
%-\frac{(q-q^{-1})}{3!}\gamma^{-\frac{1}{2}}\bigl((q^2+2q^{-2})[x_0^+(0), a_0(1)]a_0(1)\\
%&\hskip0.45cm+(2q^2+q^{-2})a_0(1)[x_0^+(0), a_0(1)]\bigr)-(q-q^{-1})\frac{[4]}{2}\gamma^{-1}a_0(1)x_0^+(1)\\
%&=-\gamma^{-\frac{3}{2}}\frac{(q-q^{-1})}{3!}[2]^2(q^2+2q^{-2})x_{0}^{+}(2)+\gamma^{-\frac{3}{2}}\frac{[4]}{2}q^2x_{0}^{+}(2)\\
%&=\frac{[6]}{3!}\gamma^{-\frac{3}{2}}x_{0}^{+}(2).
%\end{split}
%\end{equation*}

For \eqref{4.35}, it follows from \eqref{4.33}-\eqref{4.34} and \eqref{4.15} by the inductive hypothesis that
\begin{align*}
&[\,a_{0}(n+1),\,a_{0}(-n-1)\,]\\
&=\gamma^{\frac{n-1}{2}}K_0[\,a_{0}(n+1),\,[\,x_0^+(-1),\,x_0^-(-n)\,]]-\sum\limits_{\substack{\lambda\vdash n+1\\ \lambda\neq(n+1)}}\frac{(q_0-q_0^{-1})^{l(\lambda)-1}}{m_{\lambda}!}\bigl[\,a_{0}(n+1),\,a_0(\lambda)\bigr]\\
&=\gamma^{\frac{n-1}{2}}K_0\bigl([\,[a_{0}(n+1),\,x_0^+(-1)],\,x_0^-(-n)\,]
+[\,x_0^+(-1),\,[a_{0}(n+1),\,x_0^-(-n)\,]\bigr)\\
%&=\gamma^{\frac{n-1}{2}}K_0\bigl(\frac{[2(n+1)]}{n+1}\gamma^{-\frac{n+1}{2}}[\,x_0^+(n),\,x_0^-(-n)\,]
%-\frac{[2(n+1)]}{n+1}\gamma^{\frac{n+1}{2}}[\,x_0^+(-1),\,x_0^-(1)\,]\bigr)\\
&=\frac{[2(n+1)]_0}{n+1}\frac{\gamma^{n+1}-\gamma^{-(n+1)}}{q_0-q_0^{-1}}.
\end{align*}
\end{proof}

So far, we have shown that the algebra $\mathcal{U}_0(\mathfrak{g}_{2, tor})$  contains the subalgebra $U_q(\hat{sl_2})_0$,
which is isomorphic to the quantum affine algebra for type $\hat{sl_2}$.

%From the above propositions,
%%we have that $x_i^{\pm}(0),\, x_0^{\pm}(1),\,x_0^{\pm}(-1),\,a_0(\pm 1)$  ($i\in I$) in the algebra ${\mathcal U}_0(\mathfrak{g}_{2,tor})$ satisfy all involved relations.
%by induction we have that
%all  elements $x_0^{\pm}(\epsilon k),\,a_0(\epsilon k)$ for $k\in \mathbb{Z}/\{0\}$ satisfy the defining relations.
% For each $i\in I$, we define the subalgebra $U_q(\hat{sl_2})_i$ as the subalgebra of $\mathcal{U}_0(\mathfrak{g}_{2, tor})$ generated by the elements $x_i^{\pm}(0), x_i^{\epsilon}(-\epsilon), K_i^{\pm1}, \gamma^{\pm\frac{1}{2}}$.
%In particular, the algebra ${\mathcal U}_0(\mathfrak{g}_{2, tor})$ contains the subalgebra $U_q(\hat{sl_2})_0$.

{\bf Step 2:} We will construct generators ${x}_1^{+}(-1)$ and ${x}_1^{-}(1)$ in $\mathcal{U}_0(\mathfrak{g}_{2, tor})$ and prove that ${x}_1^{+}(-1)$ and ${x}_1^{-}(1)$
satisfy the same relations as those of ${x}_0^{+}(-1)$, ${x}_0^{-}(1)$ in Definition \ref{def3.5}. Furthermore, we can  construct another subalgebra $U_q(\hat{sl_2})_1$ generated by the node-1 elements such as $x_1^{\pm}(\epsilon k)$, $a_1(k)$, $x_1^{\pm}(0)$, $K_1^{\pm}$ and $\gamma^{\frac{1}{2}}$ by repeating step 1. Moreover, we will check the relations between $U_q(\hat{sl_2})_0$ and $U_q(\hat{sl_2})_1$.

For $\epsilon=\pm1$ or $\pm$, we define that
\begin{gather}\label{4.36}
{x}_1^{\pm}(\epsilon)=\pm\gamma^{\pm\frac{1}{2}}\,\bigl[\,{a}_0(\epsilon),\,{x}_1^{\pm}(0)\,\bigr]\in
\mathcal{U}_0(\mathfrak{g}_{2, tor}),
\end{gather}

Using this construction, one has that
\begin{align}
&K_i{x}_1^{\pm}(\epsilon)K_i^{-1}=q_i^{\pm a_{i1}}{x}_1^{\pm}(1),\quad
q^{d}{x}_1^{\pm}(\epsilon)q^{-d}=q^{\pm 1} {x}_1^{\pm}(1),\\
&[\,{x}_1^{\epsilon}(-\epsilon),\,{x}_1^{\epsilon}(0)\,]_{q_1^{-2}}=0,\\
&[\,{x}_i^{-\epsilon}(0),\,{x}_1^{\epsilon}(-\epsilon)\,]=0, \ \ \mbox{for} \ \ i\neq 1, \\
&[\,{x}_1^+(-1),\,{x}_1^-(1)\,]=\frac{\gamma^{-1} K_1-\gamma K_1^{-1}}{q_1-q_1^{-1}}.
\end{align}

To verify the Serre relations involving $x_1^{\epsilon}(-\epsilon)$ for $\epsilon=\pm$ or $\pm 1$, we need to do some preparation.
Similar to step 1, we construct
\begin{gather}\label{4.37}
{a}_1(1)=\gamma^{\frac{1}{2}}K_1^{-1}\,\bigl[\,{x}_{1}^+(0),\,{x}_1^{-}(1)\,\bigr]\in
\mathcal{U}_0(\mathfrak{g}_{2, tor}),\\\label{4.38}
{a}_1(-1)=\gamma^{-\frac{1}{2}}K_1\,\bigl[\,{x}_{1}^+(-1),\,{x}_1^{-}(0)\,\bigr]\in
\mathcal{U}_0(\mathfrak{g}_{2, tor}).
\end{gather}

Similar to Proposition \ref{p1}, we have the following relations.

\begin{prop}\label{l5} It is easy to see that for $\epsilon=\pm 1$,
\begin{align}\label{4.39}
&\bigl[\,{x}_1^+(\epsilon),\,{x}_1^-(0)\,\bigr]=\gamma \bigl[\,{x}_1^+(0),\,{x}_1^-(\epsilon)\,\bigr],\\\label{4.40}
%\label{bb17}
%&\bigl[\,{x}_1^+(0),\,{x}_1^-(-1)\,\bigr]=\gamma^{-1} \bigl[\,{x}_1^+(-1),\,{x}_1^-(0)\,\bigr]\\
&\bigl[\,{a}_1(1), {a}_1(-1)\,\bigr]=[2]_1\frac{\gamma-\gamma^{-1}}{q_1-q_1^{-1}},\\\label{4.41}
&{a}_1(1)=K_1^{-1}\gamma^{\frac{1}{2}}\bigl[\,{x}_1^+(0),\,{x}_1^-(1)\,\bigr]=K_1^{-1}\gamma^{-\frac{1}{2}} \bigl[\,{x}_1^+(1),\,{x}_1^-(0)\,\bigr],\\
\label{4.42}
&{a}_1(-1)=K_1\gamma^{-\frac{1}{2}}\bigl[\,{x}_1^+(-1),\,{x}_1^-(0)\,\bigr]=K_1\gamma^{\frac{1}{2}} \bigl[\,{x}_1^+(0),\,{x}_1^-(-1)\,\bigr].
\end{align}
\end{prop}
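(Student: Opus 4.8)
The plan is to establish the four relations \eqref{4.39}--\eqref{4.42} by transporting the argument of Proposition \ref{p1} from node $0$ to node $1$, the only new feature being that raising and lowering the degree at node $1$ is effected by the node-$0$ imaginary generator $a_0(\epsilon)$ through the constructions \eqref{4.36}--\eqref{4.38}. The preliminary observation I would record first is that $a_0(\epsilon)$, built from the node-$0$ generators in \eqref{4.1}--\eqref{4.2}, commutes with $K_1^{\pm1}$: the bracket $[\,x_0^+(0),x_0^-(1)\,]$ defining $a_0(1)$ carries total $K_1$-weight $a_{10}-a_{10}=0$ by \eqref{n:comm2}, so $\ad\, a_0(\epsilon)$ preserves every $K_1$-weight space and in particular annihilates the Cartan element $\tfrac{K_1-K_1^{-1}}{q_1-q_1^{-1}}$.

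First I would prove \eqref{4.39} by applying $\ad\, a_0(\epsilon)$ to the degree-$0$ relation $[\,x_1^+(0),x_1^-(0)\,]=\tfrac{K_1-K_1^{-1}}{q_1-q_1^{-1}}$ of \eqref{n:comm4}. The right-hand side is killed by the preliminary observation, while the Leibniz rule together with \eqref{4.36}, which give $[a_0(\epsilon),x_1^+(0)]=\gamma^{-1/2}x_1^+(\epsilon)$ and $[a_0(\epsilon),x_1^-(0)]=-\gamma^{1/2}x_1^-(\epsilon)$, and the centrality of $\gamma^{\pm1/2}$, turn the left-hand side into $\gamma^{-1/2}[\,x_1^+(\epsilon),x_1^-(0)\,]-\gamma^{1/2}[\,x_1^+(0),x_1^-(\epsilon)\,]$; setting this to zero yields \eqref{4.39}. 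Relations \eqref{4.41} and \eqref{4.42} are then immediate: their first equalities are exactly the definitions \eqref{4.37} and \eqref{4.38}, and each second equality follows from the corresponding first one by substituting \eqref{4.39} (with $\epsilon=1$, respectively $\epsilon=-1$) and using that $\gamma^{\pm1/2}$ is central.

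The remaining identity \eqref{4.40} is the main obstacle, since it is the genuinely quadratic Heisenberg-type relation rather than a formal consequence of a degree-$0$ relation. The hard part will be the bookkeeping of $\gamma$- and $q_1$-powers; I would run the same computation that establishes the node-$0$ identity \eqref{4.6} (cf. \cite[Sect.~4]{JZ1}), now with the node-$1$ generators in place of the node-$0$ ones. Concretely, substitute $a_1(-1)=\gamma^{-1/2}K_1[\,x_1^+(-1),x_1^-(0)\,]$ from \eqref{4.38}, pull $K_1$ (which commutes with $a_1(1)$) outside, and expand $[\,a_1(1),[\,x_1^+(-1),x_1^-(0)\,]\,]$ via the $q$-bracket identities \eqref{v1}--\eqref{v2}; the inner brackets are reduced using the mixed relation $[\,x_1^+(-1),x_1^-(1)\,]=\tfrac{\gamma^{-1}K_1-\gamma K_1^{-1}}{q_1-q_1^{-1}}$ recorded after \eqref{4.36}, the conjugation rule \eqref{n:comm2}, and \eqref{4.39}, and collecting powers produces the factor $[2]_1\tfrac{\gamma-\gamma^{-1}}{q_1-q_1^{-1}}$. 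Since every relation invoked here has already been verified for node $1$, this computation is identical to the node-$0$ case and involves no circularity; the only care required is in tracking the $q_1^{\pm2}$ twists generated each time $K_1^{\pm1}$ is commuted past an $x_1^{\mp}$.
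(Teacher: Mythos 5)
Your proposal is correct and follows exactly the route the paper intends: the paper itself offers no proof of this proposition beyond "it is easy to see" and the pointer "similar to Proposition \ref{p1}," and your argument — deriving \eqref{4.39} by applying $\mathrm{ad}\,a_0(\epsilon)$ to $[\,x_1^+(0),x_1^-(0)\,]=\frac{K_1-K_1^{-1}}{q_1-q_1^{-1}}$ (using that $a_0(\epsilon)$ is $K_1$-weight zero), reading off \eqref{4.41}--\eqref{4.42} from the definitions \eqref{4.37}--\eqref{4.38} plus \eqref{4.39}, and repeating the node-$0$ computation of \eqref{4.6} for \eqref{4.40} — is precisely the transported argument the authors have in mind. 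The only place you are no more detailed than the paper is \eqref{4.40}, where the node-$1$ analogues of \eqref{4.5} must first be extracted from \eqref{4.37}--\eqref{4.38} before the Leibniz expansion closes, but this matches the level of rigor of the paper's own treatment of Proposition \ref{p1}.
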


Now we proceed to check that $x_1^{\epsilon}(-\epsilon)$ keeps the Serre relations involving $x_0^{\epsilon}(-\epsilon)$ in Definition \ref{def3.5} (c.f. \eqref{n:comm10}-\eqref{n:comm12}).

\begin{prop}\label{p13} From the above construction, we have the following relations, which are consistent with the defining relations of $U_q(\mathfrak{g}_{2, tor})$ for $\epsilon=\pm$ or $\pm1$.
\begin{align}\label{4.43}
&\bigl[\,{x}_0^{\epsilon}(0),\,[{x}_0^{\epsilon}(0),\,{x}_1^{\epsilon}(-\epsilon)]_{q_0^{-1}}\bigr]_{q_0}=0,\\\label{4.44}
&\bigl[\,{x}_1^{\epsilon}(-\epsilon),\,[{x}_1^{\epsilon}(0),\,{x}_0^{\epsilon}(0)]_{q_1}\bigr]_{q_1^{-1}}
+\bigl[\,{x}_1^{\epsilon}(0),\,[{x}_1^{\epsilon}(-\epsilon),\,{x}_0^{\epsilon}(0)]_{q_1^{-1}}\bigr]_{q_1}=0,\\\label{4.45}
&\bigl[\,{x}_1^{\epsilon}(0),\,[{x}_1^{\epsilon}(0),\,[{x}_1^{\epsilon}(0),\,{x}_1^{-\epsilon}(\epsilon)]_1]_{q_1^{-2}}\bigr]_{q_1^{-4}}=0.
%,\\\label{4.46}
%&\bigl[\,{x}_1^{\epsilon}(-\epsilon),\,[{x}_1^{\epsilon}(-\epsilon),\,[{x}_1^{\epsilon}(-\epsilon),\,{x}_1^{-\epsilon}(0)]_1]_{q_1^{2}}\bigr]_{q_1^{4}}=0
\end{align}
\end{prop}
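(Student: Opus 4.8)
The plan is to derive each of \eqref{4.43}--\eqref{4.45} from the Serre relations already available among the zero-mode generators, by expressing the shifted-mode vector $x_1^{\epsilon}(-\epsilon)$ through its construction \eqref{4.36} as a $q$-bracket of $a_0(-\epsilon)$ with $x_1^{\epsilon}(0)$, and then isolating the auxiliary factor $a_0(-\epsilon)$ with the $q$-bracket Jacobi identities \eqref{v1} and \eqref{v2}. It suffices to treat the case $\epsilon=+$, since applying the anti-involution $\iota$ (which sends $x_i^{\pm}(k)\mapsto x_i^{\mp}(-k)$ and $q\mapsto q^{-1}$) converts each ``$+$'' identity into its ``$-$'' counterpart: $\iota$ carries a $q$-bracket $[\,a,b\,]_v$ to $-\bar v\,[\,\iota(a),\iota(b)\,]_{\bar v^{-1}}$, where $\bar v$ denotes $v$ with $q$ replaced by $q^{-1}$.

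For \eqref{4.43} I would substitute the construction of $x_1^{+}(-1)$ into the inner bracket and commute $a_0(-1)$ to the outside with \eqref{v1}. Because $a_0(-1)$ acts on the node-$0$ vectors only through the mode shift recorded in \eqref{n:Dr5}, this produces the zero-mode double bracket $[\,x_0^{+}(0),[\,x_0^{+}(0),x_1^{+}(0)\,]_{q_0^{-1}}\,]_{q_0}$, which vanishes by the quantum Serre relation \eqref{n:comm7} for the pair $(0,1)$, together with correction terms carrying $x_0^{+}(-1)$ that cancel against one another by the same relation. Identity \eqref{4.44} is the Jacobi-symmetric partner of \eqref{4.43}; here I would expand $x_1^{+}(-1)$ and redistribute the nested brackets with \eqref{v2} so that the two summands on the left produce mutually cancelling cross terms and leave only the zero-mode Serre relation, the coefficients being matched using \eqref{4.39} and the expressions \eqref{4.37}--\eqref{4.38} for $a_1(\pm1)$.

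The main obstacle is the cubic identity \eqref{4.45}, a genuinely toroidal $(\mathrm{ad}\,x_1^{\epsilon}(0))^{3}$-relation coupling the opposite root vectors $x_1^{\epsilon}$ and $x_1^{-\epsilon}$. My strategy is first to show, exactly as in the Remark following Definition \ref{def3.5} that reduces \eqref{n:comm11} to a quadratic bracket, that \eqref{4.45} is equivalent to the single relation $[\,x_1^{\epsilon}(0),x_1^{\epsilon}(\epsilon)\,]_{q_1^{-2}}=0$. This quadratic relation is then obtained by feeding the construction \eqref{4.36} of $x_1^{\epsilon}(\epsilon)$ into its left-hand side and invoking the node-$1$ Serre relation, in direct analogy with the derivation of \eqref{4.4}. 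The delicate part of the whole proposition is the bookkeeping of the $q_0$- and $q_1$-powers generated each time \eqref{v1} or \eqref{v2} is applied and each time $a_0$ is pushed through a vector via \eqref{n:Dr5}; once these are tracked consistently, no structural input beyond the already-established relations of $U_q(\hat{sl_2})_0$, $U_q(\hat{sl_2})_1$ and the zero-mode Serre relations \eqref{n:comm7}--\eqref{n:comm9} is required.
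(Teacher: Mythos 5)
Your proposal follows essentially the same route as the paper: express $x_1^{\epsilon}(-\epsilon)$ as a bracket with $a_0(-\epsilon)$ via \eqref{4.36}, pull the Heisenberg element out with \eqref{v1}--\eqref{v2} so that \eqref{4.43}--\eqref{4.44} reduce to zero-mode Serre relations, and reduce the cubic relation \eqref{4.45} to the quadratic $q$-commutator $[\,x_1^{\epsilon}(0),\,x_1^{\epsilon}(\epsilon)\,]_{q_1^{-2}}=0$, exactly as the paper does. One small correction of attribution: in \eqref{4.43} the two cross terms carrying $x_0^{\epsilon}(\mp 1)$ do not cancel by \eqref{n:comm7} alone --- their sum is precisely an instance of the symmetrized Serre relation \eqref{n:comm10}, which is the axiom the paper invokes (alongside \eqref{n:comm7}) to finish that computation.
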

\begin{proof}\, To check \eqref{4.43}, by using \eqref{4.36} together with \eqref{v1}, we have that for $\epsilon=-1$ ,
\begin{eqnarray*}
&&\bigl[\,{x}_0^{-}(0),\,[{x}_0^{-}(0),\,{x}_1^{-}(1)]_{q_0^{-1}}\bigr]_{q_0}\\
&=&\gamma^{-\frac{1}{2}}\bigl[\,{x}_0^{-}(0),\,[{x}_0^{-}(0),\,[{a}_0(1),\,{x}_1^{-}(0)]]_{q_0^{-1}}\bigr]_{q_0}\\
&=&\gamma^{-\frac{1}{2}}\Bigl(\bigl[\,{x}_0^{-}(0),\,[[{x}_0^{-}(0),\,{a}_0(1)],\,{x}_1^{-}(0)]_{q_0^{-1}}\bigr]_{q_0}
+\bigl[\,{x}_0^{-}(0),\,[{a}_0(1), [{x}_0^{-}(0),\,{x}_1^{-}(0)]_{q_0^{-1}}]\bigr]_{q_0}\Bigr)\\
&=&[2]_0\bigl[\,{x}_0^{-}(0),\,[{x}_0^{-}(1),\,{x}_1^{-}(0)]_{q_0^{-1}}\bigr]_{q_0}+[2]_0\bigl[\,{x}_0^{-}(1), [{x}_0^{-}(0),\,{x}_1^{-}(0)]_{q_0^{-1}}\bigr]_{q_0}\\
&&+\gamma^{-\frac{1}{2}}\bigl[\,{a}_0(1),[{x}_0^{-}(0),\, [{x}_0^{-}(0),\,{x}_1^{-}(0)]_{q_0^{-1}}]_{q_0}\bigr]\\
&=&0,
\end{eqnarray*}
where the last equality uses the relations \eqref{n:comm10} and \eqref{n:comm7}.

By using the $q$-bracket,
the left hand side of \eqref{4.44} for the case of $\epsilon=-$ can be seen as follows.
\begin{eqnarray*}
&&\hbox{LHS of \eqref{4.44}}=\gamma^{-\frac{1}{2}}\Big[a_0(1), \big[x_1^{-}(0), [x_1^{-}(0), x_{0}^{-}(0)]_{q_1}\big]_{q_1^{-1}}\Big].
\end{eqnarray*}
Thus \eqref{4.44} follows from the Serre relation $\big[x_1^{-}(0), [x_1^{-}(0), x_{0}^{-}(0)]_{q_1}\big]_{q_1^{-1}}=0$.

In fact, relation \eqref{4.45} holds since it is equivalent to $[\,{x}_1^{\epsilon}(0),\,{x}_1^{\epsilon}(\epsilon)]_{q_1^{-2}}=0$.
\end{proof}

Next we turn to check the inter-relations between subalgebras $U_q(\hat{sl_2})_0$ and $U_q(\hat{sl_2})_1$.

\begin{prop}\label{p13} From the above construction, we have that for $\epsilon=\pm$ or $\pm1$
\begin{align}\label{b34}
&\bigl[\,{a}_0(1), {a}_1(-1)\,\bigr]=[a_{01}]_0\frac{\gamma-\gamma^{-1}}{q_1-q_1^{-1}},\\\label{b36}
&[\,{x}_1^{\pm}(1),\,{x}_0^{\pm}(0)\,]_{q_1^{\pm a_{10}}}+[\,{x}_0^{\pm}(1),\,{x}_1^{\pm}(0)\,]_{q_1^{\pm a_{10}}}=0,
\\\label{b37}
&[\,{a}_0(\epsilon),\,{x}_1^{\pm}(-\epsilon)\,]=[a_{01}]_0\gamma^{\mp\frac{1}{2}}{x}_1^{\pm}(0),\\
%\end{align}
%\begin{align}
\label{b39}
&\bigl[\,{x}_0^{\pm}(0),\,[{x}_0^{\pm}(0),\,{x}_1^{\pm}(1)]_{q_0^{-1}}\bigr]_{q_0}=0.
\end{align}
\end{prop}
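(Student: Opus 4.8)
The plan is to reduce each of the four mixed-node identities \eqref{b34}--\eqref{b39} to relations that are already available: the degree-zero relations of Definition~\ref{def3.5} and the intra-node affine relations of $U_q(\widehat{\mathfrak{sl}}_2)_0$ and $U_q(\widehat{\mathfrak{sl}}_2)_1$ established in Step~1 and in the opening of Step~2. The observation that drives everything is that the only elements carrying a nonzero mode occurring in \eqref{b34}--\eqref{b39} are $x_1^{\pm}(\pm 1)$ and $a_1(\pm 1)$; I would substitute for each of these its defining construction \eqref{4.36}, \eqref{4.37}, \eqref{4.38}, thereby rewriting every relation in terms of $a_0(\pm 1)$ and the degree-zero generators $x_i^{\pm}(\underline{0})$, $K_i^{\pm 1}$, $q^{\pm d}$. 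After this substitution the remaining task is purely to commute the factors $a_0(\pm 1)$ into position using the $q$-bracket identities \eqref{v1} and \eqref{v2} and to collapse the outcome with the known relations; by the anti-involution $\iota$ it suffices to treat the $+$ (and $\epsilon=+$) cases.

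I would carry out the reductions in the order \eqref{b37}, \eqref{b36}, \eqref{b34}, \eqref{b39}. First \eqref{b37}: expand $x_1^{\pm}(-\epsilon)$ by \eqref{4.36} and replace $a_0(\pm1)$ by its definition \eqref{4.1}/\eqref{4.2} in terms of $x_0^{\pm}$, apply \eqref{v1}, and use the weight relations \eqref{n:comm2} together with the degree-zero Serre relations \eqref{n:comm8}/\eqref{n:comm9}; the coefficient $[a_{01}]_0$ appears from the $q_0^{a_{01}}$-weights produced when the node-$0$ currents are commuted past $x_1^{\pm}$. Next \eqref{b36} follows by writing both $x_1^{\pm}(1)$ and $x_0^{\pm}(1)$ through \eqref{4.36} and \eqref{4.3}, extracting the common factor $a_0(1)$ with \eqref{v1}--\eqref{v2}, and reducing the two summands to the single degree-zero $q$-commutator of $x_0^{\pm}(\underline 0)$ with $x_1^{\pm}(\underline 0)$, which then cancels. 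For \eqref{b34}, substitute \eqref{4.38} for $a_1(-1)$, expand $[a_0(1),[x_1^{+}(-1),x_1^{-}(0)]]$ by the Jacobi identity, and feed in \eqref{b37} together with the intra-node brackets $[x_1^{+}(k),x_1^{-}(\ell)]$ of Step~2; the $K_1^{\pm 1}$ contributions must cancel, leaving the central expression $\tfrac{\gamma-\gamma^{-1}}{q_1-q_1^{-1}}$. Finally \eqref{b39} is obtained by pulling $a_0(1)$ out of $x_1^{\pm}(1)$ with \eqref{4.36}, applying \eqref{v1} twice, and reducing to the mixed Serre relation \eqref{4.43} already proved, together with \eqref{b36} and \eqref{b37}.

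The hardest part will be the coefficient bookkeeping: tracking the powers of $\gamma^{\pm 1/2}$ and of $q_0,q_1$ that accumulate through the nested $q$-brackets, most acutely in \eqref{b39}, where two node-$0$ currents act on a first-mode node-$1$ current and the reduction to \eqref{4.43} is least direct. A second delicate point, and the one I would settle first, is the emergence of the exact coefficient $[a_{01}]_0$ in \eqref{b37}: a naive Jacobi expansion of $[a_0(\epsilon),[a_0(-\epsilon),x_1^{\pm}(0)]]$ is circular, since centrality of $[a_0(\epsilon),a_0(-\epsilon)]$ (relation \eqref{4.6}) only relates $[a_0(\epsilon),x_1^{\pm}(-\epsilon)]$ back to $[a_0(-\epsilon),x_1^{\pm}(\epsilon)]$; one must instead return to the $x_0^{\pm}$-presentation of $a_0(\pm1)$ and read off $[a_{01}]_0$ from how the degree-zero Serre relation collapses. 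Once \eqref{b37} is secured, the remaining three identities follow by the uniform substitute-and-reduce scheme, completing the verification that all mixed node-$0$/node-$1$ relations of Definition~\ref{defi-ntor} hold in $\mathcal U_0(\mathfrak g_{2,tor})$.
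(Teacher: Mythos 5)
The paper states this proposition with no proof at all (it is asserted to follow ``from the above construction''), so there is nothing to match your argument against line by line; your overall scheme --- substitute the defining constructions \eqref{4.36}--\eqref{4.38} and \eqref{4.1}--\eqref{4.3}, pull $a_0(\pm1)$ through with \eqref{v1}--\eqref{v2} and the Jacobi identity, reduce to the degree-zero relations of Definition~\ref{def3.5} and the intra-node relations of Step~1, and halve the cases with $\iota$ --- is exactly the method the authors use for the neighbouring Propositions \ref{p1} and \eqref{4.43}--\eqref{4.45}. Your treatments of \eqref{b34} and \eqref{b39} are sound: \eqref{b34} does collapse to the central expression once \eqref{b37} and the brackets $[x_1^+(0),x_1^-(0)]$, $[x_1^+(-1),x_1^-(1)]$ are fed in, and \eqref{b39} is essentially a rerun of the paper's own proof of \eqref{4.43} with the roles of $a_0(1)$ and $a_0(-1)$ exchanged, resting on \eqref{n:comm7} and \eqref{n:comm10}.

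The gap is in \eqref{b36}, and to a lesser extent in \eqref{b37}. For \eqref{b36} the proposed mechanism does not work: writing $x_1^{\pm}(1)=\pm\gamma^{\pm1/2}[a_0(1),x_1^{\pm}(0)]$ and $x_0^{\pm}(1)=\pm[2]_0^{-1}\gamma^{\pm1/2}[a_0(1),x_0^{\pm}(0)]$ puts $a_0(1)$ in inequivalent slots of the two summands with \emph{different} numerical prefactors ($1$ versus $[2]_0^{-1}$), so there is no ``common factor'' to extract; pulling $a_0(1)$ outward with \eqref{v1}--\eqref{v2} leaves behind the reversed brackets $[x_1^{\pm}(0),x_0^{\pm}(1)]_{q_1^{\pm a_{10}}}$ and $[x_0^{\pm}(0),x_1^{\pm}(1)]_{q_1^{\pm a_{10}}}$ with mismatched coefficients rather than a single degree-zero $q$-commutator, and in any case there is no defining relation asserting $[x_0^{\pm}(\underline 0),x_1^{\pm}(\underline 0)]_{q_1^{\pm a_{10}}}=0$ for adjacent nodes. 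A genuine derivation of \eqref{b36} needs extra input, e.g.\ bracketing the degree-zero Serre relation \eqref{n:comm7} with $x_0^-(e_1)$ or $x_1^-(1)$ so that the resulting $\phi(1)\sim K\,a(1)$ insertions produce both bidegree-$(1,0)$ and $(0,1)$ terms with matched coefficients. For \eqref{b37} you correctly diagnose that Jacobi plus centrality of $[a_0(1),a_0(-1)]$ only gives $[a_0(1),x_1^{\pm}(-1)]=[a_0(-1),x_1^{\pm}(1)]$, but the proposed repair is not carried far enough: expanding $a_0(-1)$ by \eqref{4.2} and using $[x_0^-(\underline 0),x_1^{\pm}(\underline 0)]=0$ leaves expressions of the type $[[x_0^+(-1),x_1^{\pm}(0)],x_0^-(0)]$ whose evaluation itself requires cross-node commutation at nonzero modes, i.e.\ \eqref{b36}-type information --- so your intended order (\eqref{b37} first, \eqref{b36} second) is in tension with what the computation actually consumes. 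These two items need to be reworked before the verification can be considered complete.
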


%
%
%From the above propositions, the quantum toroidal algebra ${\mathcal U}_0(\mathfrak{g}_{2,tor})$ also contains the subalgebra
%$U_q(\hat{sl_2})_1$ generated by $x_1^{+}(-1)$, $x_1^{-}(1)$ etc and satisfy exactly the similar relations as those of $x_0^{+}(-1)$, $x_0^{-}(1)$.
%We also have derived the relations between the two subalgebras. %$U_q(\hat{sl_2})_0$ and $U_q(\hat{sl_2})_1$.
%Repeating the process, we can construct the subalgebras $U_q(\hat{sl_2})_i$, for $i=2, \ldots, n$ and verify that
%all the Drinfeld generators $x_i^{\pm}(k), a_i(\ell)$ satisfy similar relations as those of $x_0^{+}(-1)$, $x_0^{-}(1)$
%and moreover their interrelations are also obtained.

To complete Step 2, we have to check the Serre relations for non-simply laced cases. Without loss of generality, it is sufficient to show the Serre relations for type $C_n$. %as follows. %, other relations are the same to type $A_n$ please refer to \cite{JZ1}.
 \begin{prop}\label{lemm3.1} In the case of type $C_n$ for $\epsilon_i=0$ or $1$, it holds that
 	\begin{align}\label{identity3.1}
 	&\rm{Sym}_{\epsilon_1,\epsilon_2, \epsilon_3}\sum\limits_{s=0}^{3}(-1)^{s}\Big[{3\atop
 		s}\Big]_1x_1^{-}(\epsilon_1)\cdots x_1^{-}(\epsilon_s)x_0^{-}(0)x_1^{-}(\epsilon_{s+1})\cdots x_1^{-}(\epsilon_{3})=0.
 	\end{align}
 \end{prop}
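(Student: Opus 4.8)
The identity \eqref{identity3.1} is the degree-three $q$-Serre relation attached to the ordered pair of nodes $(1,0)$ of the affine $C_n$ diagram, for which $a_{10}=-2$ and $a_{01}=-1$; it is written in current form, with the three factors $x_1^-(\epsilon_i)$ carrying modes $\epsilon_i\in\{0,1\}$ while $x_0^-$ stays at mode $0$. The plan is to reduce it, by induction on the number $m$ of indices $\epsilon_i$ equal to $1$, to the two genuinely zero-mode Serre relations that already hold in $\mathcal{U}_0(\mathfrak{g}_{2,tor})$ by definition: relation \eqref{n:comm7} with $i=1$, $j=0$ (so that $1-a_{10}=3$), which is exactly the case $m=0$, and its companion \eqref{n:comm8} with $j=1$, $\epsilon=-$, $s=1$, in which every $x_1^-$ sits at mode $0$ but $x_0^-$ sits at mode $1$. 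Thus the case $m=0$ needs nothing further.

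The engine for raising modes is the node-$0$ Heisenberg element $a_0(1)$. By the construction \eqref{4.36}, $x_1^-(1)$ is proportional to $[\,a_0(1),\,x_1^-(0)\,]$, and by the relations of the affine subalgebras $U_q(\hat{sl_2})_0$ and $U_q(\hat{sl_2})_1$ built in Steps~1--2 the operation $[\,a_0(1),\,\cdot\,]$ acts as a derivation sending $x_1^-(k)$ to a scalar multiple of $x_1^-(k+1)$ and $x_0^-(k)$ to a scalar multiple of $x_0^-(k+1)$. Applying $[\,a_0(1),\,\cdot\,]$ to the vanishing element \eqref{n:comm7} and expanding by the Leibniz rule produces exactly two kinds of terms: those in which one of the three $x_1^-(0)$ is promoted to $x_1^-(1)$, which by the symmetry of the Serre polynomial in its three $x_1^-$ slots assemble into a nonzero scalar multiple of the case $m=1$; and the single term in which $x_0^-(0)$ is promoted to $x_0^-(1)$, which is a scalar multiple of \eqref{n:comm8} and hence vanishes. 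Since $[\,a_0(1),\,\cdot\,]$ annihilates the zero element \eqref{n:comm7}, this forces the case $m=1$ to vanish. I would use the $q$-bracket identities \eqref{v1} and \eqref{v2} as the bookkeeping device for moving $a_0(1)$ through the nested brackets.

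For $m=2$ and $m=3$ I would iterate, but here a second application of $[\,a_0(1),\,\cdot\,]$ also produces out-of-range factors such as $x_1^-(2)$ and $x_0^-(2)$, as well as the companion family of Serre expressions carrying $x_0^-$ at mode $1$. I would therefore run a simultaneous induction on the pair of families -- the one in \eqref{identity3.1} and its $x_0^-(1)$-analogue obtained from \eqref{n:comm8} in the same way -- showing at each stage that the unwanted contributions cancel. The stray $x_1^-(2)$ (and higher) terms are collapsed using the relations internal to $U_q(\hat{sl_2})_1$, namely the $q_1$-commutation $[\,x_1^-(1),x_1^-(0)\,]_{q_1^{-2}}=0$ (the node-$1$ analogue of \eqref{4.4}) together with the node-$1$ analogues of \eqref{4.25}--\eqref{4.28}, which let me reorder and annihilate them; the $x_0^-(2)$ terms are removed by the same $a_0(1)$-induction applied to the companion family.

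The step I expect to be the genuine obstacle is precisely this last bookkeeping. One must verify that the Gaussian binomials $\Big[{3\atop s}\Big]_1$, together with the scalar factors $[2]_0$, $[a_{01}]_0$, the powers of $\gamma^{\pm1/2}$, and the $q_0$-powers generated at each raising step, combine so that the out-of-range and cross terms cancel \emph{exactly}, leaving only scalar multiples of the vanishing base relations \eqref{n:comm7} and \eqref{n:comm8}. Because $(1,0)$ is a non-simply-laced pair, the relation $q_0=q_1^{2}$ propagates through every coefficient and is the main source of delicacy; keeping the symmetrization $Sym_{\epsilon_1,\epsilon_2,\epsilon_3}$ intact at each stage of the induction is what ultimately makes the coefficients line up.
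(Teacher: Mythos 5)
Your strategy is essentially the one the paper uses: it also inducts on the number of raised indices, takes the base case to be the defining Serre relation \eqref{n:comm7}, uses the adjoint action of $a_0(1)$ as the raising derivation, and kills the single term in which $x_0^{-}(0)$ is promoted to $x_0^{-}(1)$ by invoking the companion Serre relation \eqref{n:comm8} (written in the paper in nested $q$-bracket form as the element $B_1$). The one point where your plan departs from, and is weaker than, the paper's is the disposal of the stray $x_1^{-}(2)$ terms at the stage with two raised indices: the paper cancels these by means of the auxiliary identities $[a_0(2),A_1]=0$ and $[a_0(1),B_1]=0$, that is, by differentiating the \emph{two base Serre relations} with the degree-two Heisenberg element $a_0(2)$, whereas you propose to collapse them with the internal $q_1$-commutation relations of $U_q(\hat{sl_2})_1$ such as $[x_1^{-}(1),x_1^{-}(0)]_{q_1^{-2}}=0$ and the node-$1$ analogues of \eqref{4.25}--\eqref{4.28}. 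That mechanism does not suffice: the individual summands carrying a single $x_1^{-}(2)$ are not zero and cannot be annihilated by reordering two $x_1^{-}$ factors inside the fixed $q$-bracket structure; they vanish only as a symmetrized group, and the identity expressing that vanishing is precisely the $a_0(2)$-derivative of \eqref{n:comm7}, used together with the $a_0(1)$-derivative of \eqref{n:comm8} to cancel the mixed terms containing one $x_1^{-}(1)$ and one $x_0^{-}(1)$. With that substitution your simultaneous induction on the two families closes exactly as in the paper, including the cases with three raised indices, and the coefficient bookkeeping you flag as the main delicacy is handled there by the symmetrization in the three $x_1^{-}$ slots just as you anticipate.
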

 \begin{proof}\, The proof is divided into four cases. \\
 	
 	$(1)$\, The case of $\epsilon_1=\epsilon_2=\epsilon_3=0$ is trivial. \\
 	
 	$(2)$\,  For the case of $\epsilon_1=1$ and $\epsilon_2=\epsilon_3=0$. Note that by
 	the Serre relation in terms of the simple generators (\ref{n:comm7}-\ref{n:comm8}) (using q-brackets), one has that
 	\begin{align*}
 	&A_1=[\,x_1^-(0), [\,x_1^-(0),\, [\,x_1^{-}(0),\,x_0^{-}(0)\,]_{q_1^{2}}]_{q_1^{-2}}]_1=0,\\
 	&B_1=[\,x_1^-(0), [\,x_1^-(0),\, [\,x_1^{-}(0),\,x_0^{-}(1)\,]_{q_1^{2}}]_{q_1^{-2}}]_1=0.
 	\end{align*}
Therefore, $[a_0(1),\, A_1]=0$ implies that
 	\begin{align*}
 	&[\,x_1^-(1), [\,x_1^-(0),\, [\,x_1^{-}(0),\,x_0^{-}(0)\,]_{q_1^{2}}]_{q_1^{-2}}]_1+[\,x_1^-(0), [\,x_1^-(1),\, [\,x_1^{-}(0),\,x_0^{-}(0)\,]_{q_1^{2}}]_{q_1^{-2}}]_1\\
 	&+[\,x_1^-(0), [\,x_1^-(0),\, [\,x_1^{-}(1),\,x_0^{-}(0)\,]_{q_1^{2}}]_{q_1^{-2}}]_1-[2][\,x_1^-(0), [\,x_1^-(0),\, [\,x_1^{-}(1),\,x_0^{-}(1)\,]_{q_1^{2}}]_{q_1^{-2}}]_1=0.
 	\end{align*}
 	Notice that the last summand is killed by $B_1=0$. So we obtain that %Thus we obtain the identity we want as follows:
 	\begin{equation*}
 	\begin{split}
 	&C_1\doteq[\,x_1^-(1), [\,x_1^-(0),\, [\,x_1^{-}(0),\,x_0^{-}(0)\,]_{q_1^{2}}]_{q_1^{-2}}]_1+[\,x_1^-(0), [\,x_1^-(1),\, [\,x_1^{-}(0),\,x_0^{-}(0)\,]_{q_1^{2}}]_{q_1^{-2}}]_1\\
 	&\hskip2cm+[\,x_1^-(0), [\,x_1^-(0),\, [\,x_1^{-}(1),\,x_0^{-}(0)\,]_{q_1^{2}}]_{q_1^{-2}}]_1=0.
 	\end{split}
 	\end{equation*}

 	$(3)$\, The case of $\epsilon_1=\epsilon_2=1$ and $\epsilon_3=0$. Similar to case $(2)$ and using $[a_0(2),\, A_1]=[a_0(1),\, B_1]=0$, we have that
 	\begin{equation*}
 	\begin{split}
 	0=&[a_0(1), C_1]\\
 	=&\gamma^{\frac{1}{2}}\Big([\,x_1^-(2), [\,x_1^-(0),\, [\,x_1^{-}(0),\,x_0^{-}(0)\,]_{q_1^{2}}]_{q_1^{-2}}]_1
 	\\
 	&\hskip1.05cm
 	+[\,x_1^-(1), [\,x_1^-(1),\, [\,x_1^{-}(0),\,x_0^{-}(0)\,]_{q_1^{2}}]_{q_1^{-2}}]_1\\
 	&\hskip2.05cm+[\,x_1^-(1), [\,x_1^-(0),\, [\,x_1^{-}(1),\,x_0^{-}(0)\,]_{q_1^{2}}]_{q_1^{-2}}]_1
 	\\
 	&\hskip2.85cm
 	-[2][\,x_1^-(1), [\,x_1^-(0),\, [\,x_1^{-}(0),\,x_0^{-}(1)\,]_{q_1^{2}}]_{q_1^{-2}}]_1\Big)\\
 	&+\gamma^{\frac{1}{2}}\Big([\,x_1^-(1), [\,x_1^-(1),\, [\,x_1^{-}(0),\,x_0^{-}(0)\,]_{q_1^{2}}]_{q_1^{-2}}]_1
 	\\
 	&\hskip1.05cm
 	+[\,x_1^-(0), [\,x_1^-(2),\, [\,x_1^{-}(0),\,x_0^{-}(0)\,]_{q_1^{2}}]_{q_1^{-2}}]_1\\
 	&\hskip2.05cm+[\,x_1^-(0), [\,x_1^-(1),\, [\,x_1^{-}(1),\,x_0^{-}(0)\,]_{q_1^{2}}]_{q_1^{-2}}]_1\\
 	&\hskip2.85cm
 	-[2][\,x_1^-(0), [\,x_1^-(1),\, [\,x_1^{-}(0),\,x_0^{-}(1)\,]_{q_1^{2}}]_{q_1^{-2}}]_1\Big)\\
 	&+\gamma^{\frac{1}{2}}\Big([\,x_1^-(1), [\,x_1^-(0),\, [\,x_1^{-}(1),\,x_0^{-}(0)\,]_{q_1^{2}}]_{q_1^{-2}}]_1\\
 	&\hskip1.05cm
 	+[\,x_1^-(0), [\,x_1^-(1),\, [\,x_1^{-}(1),\,x_0^{-}(0)\,]_{q_1^{2}}]_{q_1^{-2}}]_1\\
 	&\hskip2.05cm+[\,x_1^-(0), [\,x_1^-(0),\, [\,x_1^{-}(2),\,x_0^{-}(0)\,]_{q_1^{2}}]_{q_1^{-2}}]_1
 	\\
 	&\hskip2.85cm
 	-[2][\,x_1^-(0), [\,x_1^-(0),\, [\,x_1^{-}(1),\,x_0^{-}(1)\,]_{q_1^{2}}]_{q_1^{-2}}]_1\Big)\\
 	=&\gamma^{\frac{1}{2}}\Big([\,x_1^-(1), [\,x_1^-(1),\, [\,x_1^{-}(0),\,x_0^{-}(0)\,]_{q_1^{2}}]_{q_1^{-2}}]_1
 	\\
 	&\hskip1.05cm
 	+[\,x_1^-(1), [\,x_1^-(0),\, [\,x_1^{-}(1),\,x_0^{-}(0)\,]_{q_1^{2}}]_{q_1^{-2}}]_1\\
 	&\hskip2.05cm+[\,x_1^-(0), [\,x_1^-(1),\, [\,x_1^{-}(1),\,x_0^{-}(0)\,]_{q_1^{2}}]_{q_1^{-2}}]_1\Big)\\
 	&+\gamma^{\frac{1}{2}}\Big([\,x_1^-(2), [\,x_1^-(0),\, [\,x_1^{-}(0),\,x_0^{-}(0)\,]_{q_1^{2}}]_{q_1^{-2}}]_1
 	\\
 	&\hskip1.05cm
 	+[\,x_1^-(0), [\,x_1^-(2),\, [\,x_1^{-}(0),\,x_0^{-}(0)\,]_{q_1^{2}}]_{q_1^{-2}}]_1\\
 	&\hskip2.05cm+[\,x_1^-(0), [\,x_1^-(0),\, [\,x_1^{-}(2),\,x_0^{-}(0)\,]_{q_1^{2}}]_{q_1^{-2}}]_1\Big)\\
 	&-\gamma^{\frac{1}{2}}[2]\Big([\,x_1^-(1), [\,x_1^-(0),\, [\,x_1^{-}(0),\,x_0^{-}(1)\,]_{q_1^{2}}]_{q_1^{-2}}]_1
 \end{split}
 	\end{equation*}
 	\begin{equation*}
 	\begin{split}
 	%\\
 	&\hskip1.05cm
 	+[\,x_1^-(0), [\,x_1^-(1),\, [\,x_1^{-}(0),\,x_0^{-}(1)\,]_{q_1^{2}}]_{q_1^{-2}}]_1\\
 	&\hskip2.05cm+[\,x_1^-(0), [\,x_1^-(0),\, [\,x_1^{-}(1),\,x_0^{-}(1)\,]_{q_1^{2}}]_{q_1^{-2}}]_1\Big)\\
 	=&\gamma^{\frac{1}{2}}\Big([\,x_1^-(1), [\,x_1^-(1),\, [\,x_1^{-}(0),\,x_0^{-}(0)\,]_{q_1^{2}}]_{q_1^{-2}}]_1\\
 	&\hskip1.05cm+[\,x_1^-(1), [\,x_1^-(0),\, [\,x_1^{-}(1),\,x_0^{-}(0)\,]_{q_1^{2}}]_{q_1^{-2}}]_1\\
 	&\hskip2.05cm+[\,x_1^-(0), [\,x_1^-(1),\, [\,x_1^{-}(1),\,x_0^{-}(0)\,]_{q_1^{2}}]_{q_1^{-2}}]_1\Big),
 	\end{split}
 	\end{equation*}
 	which implies that $(4.16)$ holds for $\epsilon_1=\epsilon_2=1$ and $\epsilon_3=0$.
 	
 	$(4)$\,  The case of $\epsilon_1=\epsilon_2=\epsilon_3=1$ is checked similarly.
 	 	Therefore, Proposition \ref{lemm3.1} has been proved.\\

Now we can repeat step 1 to construct the generators involving the index $i=1$ as follows.

\begin{gather}\label{4.55}
x_{1}^{\pm}(\epsilon k)=\pm [2]_1^{-1}\gamma^{\pm\frac{1}{2}}\,\bigl[\,a_1(\epsilon),\,x_{1}^{\pm}(\epsilon (k-1))\,\bigr]\in
{\mathcal U}_0(\mathfrak{g}_{2,tor}),\\\label{4.56}
%x_{0}^{\pm}(-k-1)=\pm [2]^{-1}\gamma^{\pm\frac{1}{2}}\,\bigl[\,a_0(-1),\,x_{i}^{\pm}(-k)\,\bigr]\in
%{\mathcal U}_0,\\\label{a6}
{\phi}_{1}(k)=(q_1-q_1^{-1})\gamma^{\frac{2-k}{2}}\,\bigl[\,x_{1}^+(k-1),\,x_{1}^{-}(1)\,\bigr]\in
{\mathcal U}_0(\mathfrak{g}_{2,tor}),\\\label{4.57}
{\varphi}_{1}(-k)=-(q_1-q_1^{-1})\gamma^{\frac{k-2}{2}}\,\bigl[\,x_{1}^+(-1),\,x_{1}^{-}(-k+1)\,\bigr]\in
{\mathcal U}_0(\mathfrak{g}_{2,tor}),
\end{gather}

The elements $x_{1}^{\pm}(\epsilon k)$ and $a_1(k)$ also satisfy the relevant relations consistent with Definition \ref{defi-ntor},  similar to Proposition \ref{p10} to Proposition \ref{p11}.

In the remaining part of Step 2, we will check the Serre relations on higher degree elements $x_{1}^{\pm}(\epsilon k)$ by induction on $k\in \mathbb{N}$.
By the inductive hypothesis, we assume that the Serre relations involving  $x_{1}^{\pm}(\epsilon m)$ for $m\leq n-1$ hold.  Then we have the following proposition.

\begin{prop}\, For $m, n, k\in \mathbb{N}$, the Serre relations holds

\begin{align}\label{4.58}
&\bigl[\,{x}_0^{\pm}(\epsilon k),\,[{x}_0^{\pm}(\epsilon k),\,{x}_1^{\pm}(\epsilon n)]_{q_0^{-1}}\bigr]_{q_0}=0,\\\label{4.59}
&\bigl[\,{x}_1^{\pm}(\epsilon k),\,[{x}_1^{\pm}(\epsilon k),\,{x}_0^{\pm}(\epsilon n)]_{q_1^{-1}}\bigr]_{q_1}=0,\\\label{4.60}
&\bigl[{x}_1^{\pm}(\epsilon n), [{x}_1^{\pm}(\epsilon (n+t)), {x}_0^{\pm}(\epsilon m)]_{q_1^{-1}}\bigr]_{q_1}+
\bigl[{x}_1^{\pm}(\epsilon (n+t)), [{x}_1^{\pm}(\epsilon n), {x}_1^{\pm}(\epsilon m)]_{q_1^{-1}}\bigr]_{q_1}=0.
\end{align}
\end{prop}

\begin{proof}\, Here we only check \eqref{4.58} for the case $-$ and $\epsilon=+$, it is similar for the other cases.
By \eqref{4.36} and \eqref{v1}, one has that
\begin{eqnarray*}
&&\bigl[\,{x}_0^{-}(k), [{x}_0^{-}(k),\,{x}_1^{-}(n)]_{q_0^{-1}}\bigr]_{q_0}\\
&=&\gamma^{-\frac{1}{2}}\bigl[\,{x}_0^{-}(k),\,[{x}_0^{-}(k),\,[{a}_0(1),\,{x}_1^{-}(n-1)]]_{q_0^{-1}}\bigr]_{q_0}\\
&=&\gamma^{-\frac{1}{2}}\bigl([{x}_0^{-}(k),\,[[{x}_0^{-}(k), {a}_0(1)], {x}_1^{-}(n-1)]_{q_0^{-1}}]_{q_0}
+[{x}_0^{-}(k), [{a}_0(1), [{x}_0^{-}(k), {x}_1^{-}(n-1)]_{q_0^{-1}}]]_{q_0}\bigr)\\
&=&[2]_1\bigl[\,{x}_0^{-}(k),\,[{x}_0^{-}(k+1),\,{x}_1^{-}(n-1)]_{q_0^{-1}}\bigr]_{q_0}+[2]_1\bigl[\,{x}_0^{-}(k+1), [{x}_0^{-}(k),\,{x}_1^{-}(n-1)]_{q_0^{-1}}\bigr]_{q_0}\\
&&+\gamma^{-\frac{1}{2}}\bigl[\,{a}_0(1),[{x}_0^{-}(k),\, [{x}_0^{-}(k),\,{x}_1^{-}(n-1)]_{q_0^{-1}}]_{q_0}\bigr]=0,
\end{eqnarray*}
where we have used the inductive hypothesis
$$[\,{x}_0^{-}(k),\,[{x}_0^{-}(k+1),\,{x}_1^{-}(n-1)]_{q_0^{-1}}]_{q_0}+[\,{x}_0^{-}(k+1), [{x}_0^{-}(k),\,{x}_1^{-}(n-1)]_{q_0^{-1}}]_{q_0}=0,$$
$$[{x}_0^{-}(k),\, [{x}_0^{-}(k),\,{x}_1^{-}(n-1)]_{q_0^{-1}}=0.$$

The proof of relation \eqref{4.59} is almost as same as that of relation \eqref{4.58}.

Similarly we only check relation \eqref{4.60} for the case $-$ and $\epsilon=+$, first it follows from \eqref{4.59} for all $m, n\in \mathbb{N}$

 $$A_2\doteq\bigl[\,{x}_1^{-}(n),\,[{x}_1^{-}(n),\,{x}_0^{-}(m)]_{q_1^{-1}}\bigr]_{q_1}=0.$$
Therefore it is clear that
\begin{equation*}
 	\begin{split}
 	0=&[a_1(t), A_2]=[a_1(t), [\,{x}_1^{-}(n),\,[{x}_1^{-}(n),\,{x}_0^{-}(m)]_{q_1^{-1}}]_{q_1}]\\
    =&-\frac{[2t]_1}{t}\gamma^{\frac{|t|}{2}}\bigl([\,{x}_1^{-}(n+t),\,[{x}_1^{-}(n),\,{x}_0^{-}(m)]_{q_1^{-1}}]_{q_1}+
    [\,{x}_1^{-}(n),\,[{x}_1^{-}(n+t),\,{x}_0^{-}(m)]_{q_1^{-1}}]_{q_1}\bigr) \\
    &+\frac{[t]_1}{t}\gamma^{\frac{|t|}{2}}[\,{x}_1^{-}(n),\,[{x}_1^{-}(n),\,{x}_0^{-}(m+t)]_{q_1^{-1}}]_{q_1},
 \end{split}
 	\end{equation*}
which implies that  $$[\,{x}_1^{-}(n+t),\,[{x}_1^{-}(n),\,{x}_0^{-}(m)]_{q_1^{-1}}]_{q_1}+
    [\,{x}_1^{-}(n),\,[{x}_1^{-}(n+t),\,{x}_0^{-}(m)]_{q_1^{-1}}]_{q_1}=0.$$

\end{proof}

{\bf Step 3:} Repeating the above two steps, we can obtain all elements $x_{i}^{\pm}(\epsilon k)$ and $a_{i}(\epsilon k)$  at the remaining nodes $1<i\in I$ similarly,
which satisfy all relevant relations in Definition \ref{defi-ntor}.

\begin{gather}\label{4.61}
{x}_i^{\pm}(\epsilon)=\pm\gamma^{\pm\frac{1}{2}}\,\bigl[\,{a}_{i-1}(\epsilon),\,{x}_i^{\pm}(0)\,\bigr]\in
\mathcal{U}_0(\mathfrak{g}_{2, tor}),\\\label{4.62}
{a}_i(1)=\gamma^{\frac{1}{2}}K_i^{-1}\,\bigl[\,{x}_{i}^+(0),\,{x}_i^{-}(1)\,\bigr]\in
\mathcal{U}_0(\mathfrak{g}_{2, tor}),\\\label{4.63}
{a}_i(-1)=\gamma^{-\frac{1}{2}}K_i\,\bigl[\,{x}_{i}^+(-1),\,{x}_i^{-}(0)\,\bigr]\in
\mathcal{U}_0(\mathfrak{g}_{2, tor}),\\\label{4.64}
x_{i}^{\pm}(\epsilon k)=\pm [2]_i^{-1}\gamma^{\pm\frac{1}{2}}\,\bigl[\,a_{i}(\epsilon),\,x_{i}^{\pm}(\epsilon (k-1))\,\bigr]\in
{\mathcal U}_0(\mathfrak{g}_{2,tor}),\\\label{4.65}
%x_{0}^{\pm}(-k-1)=\pm [2]^{-1}\gamma^{\pm\frac{1}{2}}\,\bigl[\,a_0(-1),\,x_{i}^{\pm}(-k)\,\bigr]\in
%{\mathcal U}_0,\\\label{a6}
{\phi}_{i}(k)=(q_i-q_i^{-1})\gamma^{\frac{2-k}{2}}\,\bigl[\,x_{i}^+(k-1),\,x_{i}^{-}(1)\,\bigr]\in
{\mathcal U}_0(\mathfrak{g}_{2,tor}),\\\label{4.66}
{\varphi}_{i}(-k)=-(q_i-q_i^{-1})\gamma^{\frac{k-2}{2}}\,\bigl[\,x_{i}^+(-1),\,x_{i}^{-}(-k+1)\,\bigr]\in
{\mathcal U}_0(\mathfrak{g}_{2,tor}).
\end{gather}

Actually we can check all defining relations involving these elements in Definition \ref{defi-ntor} similar to the above two steps, except for the Serre relations of non-simply-laced cases. We verify the Serre relations of  type $B_n$ and  $G_2$ as follows.

 \begin{prop}\label{lemm3.2} For type $B_n$, the following Serre relation holds.
 	\begin{align}\label{identity3.2}
 	&\rm{Sym}_{\epsilon_1,\epsilon_2, \epsilon_3}\sum\limits_{s=0}^{3}(-1)^{s}\Big[{3\atop
 		s}\Big]_1x_n^{-}(\epsilon_1)\cdots x_n^{-}(\epsilon_s)x_{n-1}^{-}(0)x_n^{-}(\epsilon_{s+1})\cdots x_n^{-}(\epsilon_{3})=0.
 	\end{align}
 \end{prop}
 \begin{proof}\, It holds by induction on the index that
 \begin{align*}
 	&A_3=[\,x_n^-(0), [\,x_n^-(0),\, [\,x_n^{-}(0),\,x_{n-1}^{-}(0)\,]_{q_n^{2}}]_{q_n^{-2}}]_1=0,\\
 	&B_3=[\,x_n^-(0), [\,x_n^-(0),\, [\,x_n^{-}(0),\,x_{n-1}^{-}(1)\,]_{q_n^{2}}]_{q_n^{-2}}]_1=0.
 	\end{align*}
 the remaining proof is almost as the same as that of Proposition \ref{lemm3.1}.
 \end{proof}

 \begin{prop}\label{lemm3.3} For type $G_2$, we have that
 	\begin{align}\label{identity3.3}
 	&\rm{Sym}_{\epsilon_1,\cdots, \epsilon_4}\sum\limits_{s=0}^{4}(-1)^{s}\Big[{4\atop
 		s}\Big]_2x_2^{-}(\epsilon_1)\cdots x_2^{-}(\epsilon_s)x_{1}^{-}(0)x_2^{-}(\epsilon_{s+1})\cdots x_2^{-}(\epsilon_{4})=0.
 	\end{align}
 \end{prop}
 \begin{proof}\, The proof is divided into five cases according to the value of $\epsilon_i$. \\
 	
 	$(1)$\, The case of all $\epsilon_i=0$ is clear. \\
 	
 	$(2)$\,  For the case of $\epsilon_1=1$ and other $\epsilon_i=0$,
 	it follows from the Serre relation \eqref{n:comm7} that (using $q$-brackets),
 	\begin{equation*}
 	\begin{split}
 	&A_4=[\,x_2^-(0),[\,x_2^-(0), [\,x_2^-(0),\, [\,x_2^{-}(0),\,x_1^{-}(0)\,]_{q_1^{3}}]_{q_1^{-3}}]_{q_1}]_{q_1^{-1}}=0,\\
 	&B_4=[\,x_2^-(0),[\,x_2^-(0), [\,x_2^-(0),\, [\,x_2^{-}(0),\,x_1^{-}(1)\,]_{q_1^{3}}]_{q_1^{-3}}]_{q_1}]_{q_1^{-1}}=0.
 	\end{split}
 	\end{equation*}
 	Then $[a_1(1),\, A_4]=0$ and $B_4=0$ imply that
 	\begin{equation*}
 	\begin{split}
 	&C_4\doteq[\,x_2^-(1),[\,x_2^-(0), [\,x_2^-(0),\, [\,x_2^{-}(0),\,x_1^{-}(0)\,]_{q_1^{3}}]_{q_1^{-3}}]_{q_1}]_{q_1^{-1}}\\
 	&\hskip0.8cm+[\,x_2^-(0),[\,x_2^-(1), [\,x_2^-(0),\, [\,x_2^{-}(0),\,x_1^{-}(0)\,]_{q_1^{3}}]_{q_1^{-3}}]_{q_1}]_{q_1^{-1}}\\
 	&\hskip1.5cm+[\,x_2^-(0),[\,x_2^-(0), [\,x_2^-(1),\, [\,x_2^{-}(0),\,x_1^{-}(0)\,]_{q_1^{3}}]_{q_1^{-3}}]_{q_1}]_{q_1^{-1}}\\
 	&\hskip2.5cm+[\,x_2^-(0),[\,x_2^-(0), [\,x_2^-(0),\, [\,x_2^{-}(1),\,x_1^{-}(0)\,]_{q_1^{3}}]_{q_1^{-3}}]_{q_1}]_{q_1^{-1}}=0.
 	\end{split}
 	\end{equation*}\\
 	
 	$(3)$\,  For the case of $\epsilon_1=\epsilon_2=1$ and $\epsilon_3=\epsilon_4=0$, using $[a_1(2),\, A_4]=0$, $[a_1(1),\, B_4]=0$ and $[a_1(1),\, C_4]=0$ it follows that
 	\begin{equation*}
 	\begin{split}
 	D_4&\doteq[\,x_2^-(1),[\,x_2^-(1), [\,x_2^-(0),\, [\,x_2^{-}(0),\,x_1^{-}(0)\,]_{q_1^{3}}]_{q_1^{-3}}]_{q_1}]_{q_1^{-1}}\\
 	&\hskip0.6cm+[\,x_2^-(1),[\,x_2^-(0), [\,x_2^-(1),\, [\,x_2^{-}(0),\,x_1^{-}(0)\,]_{q_1^{3}}]_{q_1^{-3}}]_{q_1}]_{q_1^{-1}}\\
 	&\hskip1.2cm+[\,x_2^-(1),[\,x_2^-(0), [\,x_2^-(1),\, [\,x_2^{-}(1),\,x_1^{-}(0)\,]_{q_1^{3}}]_{q_1^{-3}}]_{q_1}]_{q_1^{-1}}\\
 	&\hskip1.8cm+[\,x_2^-(0),[\,x_2^-(1), [\,x_2^-(1),\, [\,x_2^{-}(0),\,x_1^{-}(0)\,]_{q_1^{3}}]_{q_1^{-3}}]_{q_1}]_{q_1^{-1}}\\
 	&\hskip2.4cm+[\,x_2^-(0),[\,x_2^-(1), [\,x_2^-(0),\, [\,x_2^{-}(1),\,x_1^{-}(0)\,]_{q_1^{3}}]_{q_1^{-3}}]_{q_1}]_{q_1^{-1}}\\
 	&\hskip3.0cm+[\,x_2^-(0),[\,x_2^-(0), [\,x_2^-(1),\, [\,x_2^{-}(1),\,x_1^{-}(0)\,]_{q_1^{3}}]_{q_1^{-3}}]_{q_1}]_{q_1^{-1}}=0.
 	\end{split}
 	\end{equation*}

 	$(4)$\,  For the case of $\epsilon_1=\epsilon_2=\epsilon_3=1$ and $\epsilon_4=0$, applying $[a_1(2),\, C_4]=0$ and $[a_1(1),\, B_4]=0$
 it follows that $[a_1(1),\, D_4]=0$.

 	$(5)$\,  The case of all $\epsilon_i=1$ can be checked similarly as $(4)$. % and the proof is left to the reader, which can be checked directly.
 \end{proof}

{\bf Step 4:} There exists a natural homomorphism $\pi: \mathcal{U}_0(\mathfrak{g}_{2,tor})\rightarrow U_{q}(\mathfrak {g}_{2,tor})$ such that
$\pi(a)=a$  for $a\in \mathcal{U}_0(\mathfrak{g}_{2,tor})$, as we have been using the same notation for the
 elements in the former. From the three steps above, we have constructed all (or rather images of) the generators of $U_{q}(\mathfrak {g}_{2,tor})$ in $\mathcal{U}_0(\mathfrak{g}_{2,tor})$, and they keep all defining relations in Definition \ref{defi-ntor}, so $\pi$ is an epimorphism.
At the same time, it is clear that $\pi$ is injective except for $\mathfrak{g}$ being type $A$, since all relations of the latter can be derived
from or part of the former (except in type A) and the very same set of generators of $\mathcal{U}_0(\mathfrak{g}_{2,tor})$ does form a simplified set of generators for the latter (in the same notation).

In type $A$, note that the Dynkin diagram is cyclic, so for $\epsilon=\pm1$ or $\pm$, we can also define that
\begin{gather*}
y_n^{\pm}(\epsilon)=\pm \gamma^{\pm\frac{1}{2}}\,\bigl[\,a_0(\epsilon),\,x_n^{\pm}(0)\,\bigr],\\
b_n(1)=\gamma^{\frac{1}{2}}K_1^{-1}\,\bigl[\,x_{n}^+(0),\,y_n^{-}(1)\,\bigr],\\
b_n(-1)=\gamma^{-\frac{1}{2}}K_1\,\bigl[\,y_{n}^+(-1),\,x_n^{-}(0)\,\bigr].
\end{gather*}
We define the elements $y_i^{\pm}(\epsilon)$, $b_{i}(1)$ and $b_{i}(-1)$ for $i=n-1, n-2, , \cdots, 1, 0$ inductively by%similarly
\begin{gather*}
y_i^{\pm}(\epsilon)=\pm \gamma^{\pm\frac{1}{2}}\,\bigl[\,b_{i+1}(\epsilon),\,x_i^{\pm}(0)\,\bigr],\\
b_i(1)=\gamma^{\frac{1}{2}}K_1^{-1}\,\bigl[\,y_{i}^+(0),\,y_i^{-}(1)\,\bigr],\\
b_i(-1)=\gamma^{-\frac{1}{2}}K_1\,\bigl[\,y_{i}^+(-1),\,x_i^{-}(0)\,\bigr].
\end{gather*}
Arguing by the degree, one can define the higher degree elements $y_i^{\pm}(k)$ and $b_{i}(k)$ similar to $x_i^{\pm}(k)$ and $a_{i}(k)$ for $i\in I$ and $k\in \mathbb{Z}^*$ (c.f. Step 3).
Therefore $$\ker \pi=\{y_i^{\pm}(k)-x_i^{\pm}(k), b_{i}(k)-a_i(k)|i\in I, k\in \mathbb{Z}^*\}.$$

It is easy to see that $x_n^{-\epsilon}(\epsilon)-y_n^{-\epsilon}(\epsilon)\in \ker \pi$ for  $\epsilon=\pm1$ or $\pm$.
Let $H_1$ be the ideal of ${\mathcal U}_0(\mathfrak{g}_{2, tor})$ generated by $(x_n^{-\epsilon}(\epsilon)-y_n^{-\epsilon}(\epsilon))$ for  $\epsilon=\pm1$ or $\pm$.
Then $H_1 \subseteq \ker \pi$.

Denote the quotient algebra ${\mathcal U}_0(\mathfrak{g}_{2,tor})/H_1$ by ${\bar{\mathcal U}}_0(\mathfrak{g}_{2,tor})$.
Actually, in the quotient algebra ${\bar{\mathcal U}}_0(\mathfrak{g}_{2,tor})$, we have that

\begin{equation*}
\begin{split}
b_{n}(1)&=\gamma^{\frac{1}{2}}K_1^{-1}\,\bigl[\,x_{n}^+(0),\,y_n^{-}(1)\,\bigr]
=\gamma^{\frac{1}{2}}K_1^{-1}\,\bigl[\,x_{n}^+(0),\,x_n^{-}(1)\,\bigr]=a_{n}(1),\\
b_{n}(-1)&=\gamma^{\frac{1}{2}}K_1^{-1}\,\bigl[\,y_{n}^+(-1),\,y_n^{-}(0)\,\bigr]
=\gamma^{\frac{1}{2}}K_1^{-1}\,\bigl[\,x_{n}^+(-1),\,x_n^{-}(0)\,\bigr]=a_{n}(-1).
\end{split}
\end{equation*}

Then in the quotient algebra ${\bar{\mathcal U}}_0(\mathfrak{g}_{2, tor})$, by induction we have for $i=0, 1, \ldots, n$ that

\begin{equation*}
\begin{split}
y_i^{\pm}(\epsilon)&=\pm \gamma^{\pm\frac{1}{2}}\,\bigl[\,b_{i+1}(\epsilon),\,x_i^{\pm}(0)\,\bigr]=\pm \gamma^{\pm\frac{1}{2}}\,\bigl[\,a_{i+1}(\epsilon),\,x_i^{\pm}(0)\,\bigr]=x_i^{\pm}(\epsilon),\\
b_{i}(1)&=\gamma^{\frac{1}{2}}K_1^{-1}\,\bigl[\,x_{i}^+(0),\,y_i^{-}(1)\,\bigr]
=\gamma^{\frac{1}{2}}K_1^{-1}\,\bigl[\,x_{i}^+(0),\,x_i^{-}(1)\,\bigr]=a_{i}(1),\\
b_{i}(-1)&=\gamma^{\frac{1}{2}}K_1^{-1}\,\bigl[\,y_{i}^+(-1),\,x_i^{-}(0)\,\bigr]
=\gamma^{\frac{1}{2}}K_1^{-1}\,\bigl[\,x_{i}^+(-1),\,x_i^{-}(0)\,\bigr]=a_{i}(-1).
\end{split}
\end{equation*}
Arguing by the degree, one has that $y_i^{\pm}(k)=x_i^{\pm}(k)$ and $b_i(k)=a_i(k)$ for $k\in \mathbb{Z}^*$, which implies
$\ker \pi \subseteq H_1$.  Thus when $\mathfrak{g}$ is of type $A$, $${\mathcal U}_0(\mathfrak{g}_{2,tor})/H_1\cong U_{q}(\mathfrak {g}_{2,tor}).$$
When $\mathfrak{g}$ is not of type $A$,
$${\mathcal U}_0(\mathfrak{g}_{2,tor}) \cong U_{q}(\mathfrak {g}_{2,tor}).$$
 \end{proof}

\section{Proof of Theorem \ref{maintheo2-2}}
Theorem \ref{maintheo2-2} is equivalent to the following statement.
\begin{theo}\label{5.1}\, There exists an epimorphism $\pi_1: \mathcal{U}_0(\mathfrak{g}_{N,tor})\rightarrow \overline{U}_{q}(\mathfrak {g}_{N,tor})$ such that
$\pi_1(a)=a$  for $a\in \mathcal{U}_0(\mathfrak{g}_{N,tor})$.
\end{theo}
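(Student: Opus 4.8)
The plan is to define $\pi_1$ on the simplified generators $x_i^{\pm}(\underline{0})$, $x_0^{-\epsilon}(\epsilon e_s)$, $K_i^{\pm1}$, $q^{\pm d}$, $\gamma_s^{\pm\frac12}$ by sending each to the element of $\overline{U}_{q}(\mathfrak{g}_{N,tor})$ of the same name. This map is a well-defined algebra homomorphism essentially for free: by the Remark following Definition \ref{def3.5}, every defining relation \eqref{n:comm0}--\eqref{n:comm13} of $\mathcal{U}_0(\mathfrak{g}_{N,tor})$ is a special case of the Drinfeld relations \eqref{n:Dr1}--\eqref{n:Dr12} of Definition \ref{defi-ntor}. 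The whole weight of the theorem therefore lies in surjectivity, for which I would construct inside $\mathcal{U}_0(\mathfrak{g}_{N,tor})$ all the Drinfeld generators $x_i^{\pm}(\underline{k})$ and $a_i^{(s)}(\ell)$, verify that they obey the relations of Definition \ref{defi-ntor}, and note that $\pi_1$ then carries them onto the genuine generators of $\overline{U}_{q}(\mathfrak{g}_{N,tor})$.

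First I would handle one affine direction at a time. For each fixed $s\in J$, the direction-$s$ simplified generators $x_i^{\pm}(\underline{0})$, $x_0^{-\epsilon}(\epsilon e_s)$, $K_i$, $q^d$, $\gamma_s$ are exactly those of $\mathcal{U}_0(\mathfrak{g}_{2,tor})$, and the subalgebra $U_q^{(s)}$ they generate is isomorphic to $U_q(\mathfrak{g}_{2,tor})$ (see the Remark after Definition \ref{defi-ntor}). Thus Theorem \ref{maintheo2-1} applies verbatim: starting from $a_0^{(s)}(\pm1)$, which \eqref{n:Dr8} expresses as a bracket of $x_0^{\pm}(\underline{0})$ and $x_0^{\mp}(\pm e_s)$, I build the elements $x_0^{\pm}(ke_s)$ and $a_0^{(s)}(\ell)$ by the spiral induction of Steps 1--3 of Section \ref{S:case1}, then propagate to every node by bracketing with $a_0^{(s)}(\pm1)$ via \eqref{n:Dr5} and iterating along the affine Dynkin diagram, obtaining $x_i^{\pm}(ke_s)$ and $a_i^{(s)}(\ell)$ for all $i\in I$. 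The automorphisms $\tau_s$ of Proposition \ref{N-symmetry} then transport every relation verified for one direction to all remaining directions, which removes most of the casework in $s$.

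Next I would produce the mixed-mode generators $x_i^{\pm}(\underline{k})$ for arbitrary $\underline{k}\in\mathbb{Z}^{N-1}$ by iterating \eqref{n:Dr5} across distinct directions: since the diagonal coefficient $[2\ell]_i/\ell$ is nonzero, the bracket $[\,a_i^{(s)}(\ell),x_i^{\pm}(\underline{k})\,]$ shifts the multidegree by $\ell e_s$, so every $\underline{k}$ is reached from $x_i^{\pm}(\underline{0})$. All relations confined to a single direction are inherited through $U_q^{(s)}\cong U_q(\mathfrak{g}_{2,tor})$ and Theorem \ref{maintheo2-1}, so the genuinely new work is the cross-direction relations with $s\neq s'$, namely \eqref{n:Dr3}, \eqref{n:Dr7}, \eqref{n:Dr12} and the commutation relations mixing the two directions. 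These I would first establish on the lowest modes using the seed relations \eqref{n:comm6} and \eqref{n:comm13} together with $\tau_s$, and then lift to all modes by induction on the degrees, bracketing with a suitable $a_i^{(s)}(\ell)$ and expanding via the $q$-bracket identities \eqref{v1}--\eqref{v2}, exactly as the higher-mode Serre relations were obtained in Section \ref{S:case1}. Once every Drinfeld generator lies in the image, $\pi_1$ is an epimorphism and $H_2:=\ker\pi_1$ delivers Theorem \ref{maintheo2-2}.

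The hard part will be the inductive verification of \eqref{n:Dr7} and \eqref{n:Dr12}, the two relations with no $N=2$ analogue: they couple two distinct affine directions at every mode level, so the single-direction machinery does not apply directly. I expect the crux to be managing the double induction on the two mode parameters while repeatedly invoking $\tau_s$ to reduce the cubic and quartic bracket expressions back to the seeds \eqref{n:comm11}--\eqref{n:comm13}; the expansions \eqref{v1}--\eqref{v2} generate several terms that must cancel in pairs, and tracking the accompanying powers of $\gamma_s$, $\gamma_{s'}$ and $q_0$ is the most delicate bookkeeping.
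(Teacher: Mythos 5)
Your proposal matches the paper's proof: the homomorphism property is immediate because the relations \eqref{n:comm0}--\eqref{n:comm13} of $\mathcal{U}_0(\mathfrak{g}_{N,tor})$ are a subset of the defining relations in Definition \ref{defi-ntor}, and surjectivity is obtained exactly as you describe, by constructing $a_i^{(s)}(\ell)$ and $x_i^{\pm}(ke_s)$ one affine direction at a time via the machinery of Section \ref{S:case1} and then reaching general $x_i^{\pm}(\underline{k})$ by iterated brackets with the $a_i^{(s)}$. The only divergence is that the work you flag as the hard part --- verifying \eqref{n:Dr7} and \eqref{n:Dr12} for the elements constructed inside $\mathcal{U}_0(\mathfrak{g}_{N,tor})$ --- is not required for the epimorphism statement, since the paper afterwards simply defines $H_2=\ker\pi_1$ rather than describing the kernel explicitly, so that verification can be omitted.
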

%Now it is sufficient to prove Theorem \ref{5.1}.
To show Theorem \ref{5.1}, we note that $\pi_1$ is an algebra homomorphism, as
$\pi_1$ preserve all relations from \eqref{n:comm0} to \eqref{n:comm13}
since they are part of the defining relations according to Definition \ref{defi-ntor}.

Now we are left to show $\pi_1$ is surjective. Actually we can define all generators of $\overline{U}_{q}(\mathfrak {g}_{N,tor})$ inductively in a similar manner as to Section 4. First of all, for $s\in J$, $k\in \mathbb{N}$ and  $\epsilon=\pm1$, we define (inductively): %inductively as follows,
	\begin{gather*}
	a_0^{(s)}(1)=K_0^{-1}\gamma_s^{1/2}\,\bigl[\,x_0^+(\underline{0}),\,x_{0}^-(e_s)\,\bigr],\\
	a_0^{(s)}(-1)=K_0{\gamma_s}^{-1/2}\,\bigl[\,x_{{0}}^+(-e_s),\,x_{0}^-(\underline{0})\,
	\bigr],\\
x_0^{\pm}(\epsilon e_s)=\pm [2]_0^{-1}\gamma_s^{\pm\frac{1}{2}}\,\bigl[\,a^{(s)}_0(\epsilon),\,x_0^{\pm}(\underline{0})\,\bigr],\\
	x_{0}^{\pm}(\epsilon ke_s)=\pm [2]_0^{-1}\gamma_s^{\pm\frac{1}{2}}\,\bigl[\,a_0^{(s)}(\epsilon e_s),\,x_{0}^{\pm}(\epsilon (k-1)e_s)\,\bigr],\\
{\phi}_{0}^{(s)}(k)=(q_0-q_0^{-1})\gamma_s^{\frac{2-k}{2}}\,\bigl[\,x_{0}^+((k-1)e_s),\,x_{0}^{-}(e_s)\,\bigr],\\
{\varphi}_{0}^{(s)}(-k)=-(q_0-q_0^{-1})\gamma_s^{\frac{k-2}{2}}\,\bigl[\,x_{0}^+(-e_s),\,x_{0}^{-}((-k+1)e_s)\,\bigr].
\end{gather*}
Moreover, we can also define the elements $a_i^{(s)}(k)$ and $x_i^{\pm}(k e_s)$  for $i\in I$ and $k\in \mathbb{Z}^*$  by the same method in section 4.
Furthermore, we can construct that for $\epsilon, \epsilon' =\pm$ or $\pm1$ and $s\neq s'\in J$,
\begin{gather*}
x_0^{\pm}(\epsilon e_1+\epsilon'e_{2})=\pm[2]_0^{-1}\gamma_1^{\pm 1/2}\,\bigl[\,a_0^{(1)}(\epsilon),\,x_{0}^{\pm}(\epsilon' e_{2})\,\bigr].
\end{gather*}

Let $\underline{k}=k_1e_1+\cdots+k_{N-1}e_{N-1}$, we define that
\begin{gather*}
x_0^{\pm}(\underline{k})=\pm[2]_0^{-1}\gamma_{N-1}^{\pm 1/2}\,\bigl[\,a_0^{(N-1)}(k_{N-1}),\,x_{0}^{\pm}(k_1e_1+\cdots+k_{N-2}e_{N-2})\,\bigr].
\end{gather*}
Then we can define $x_i^{\pm}(\underline{k})$ using the same way. As a consequence, $\pi_1$ is surjective.

Let $H_2=\ker \pi_1$, then $\mathcal{U}_0(\mathfrak{g}_{N,tor})/H_2\cong U_{q}(\mathfrak {g}_{N,tor})$.
Therefore, we have completed the proof of Theorem \ref{maintheo2-2}.

\begin{remark}\, Unlike the case of $N=2$, it is complicated to describe $\ker \pi_1$. We have the following observations. \\
\noindent (1).  Notice that we can also construct that for $\epsilon, \epsilon' =\pm$ or $\pm1$, $i\in I$ and $s\neq s'\in J$,
\begin{gather*}
{x}_i^{\pm}(\epsilon e_s+\epsilon'e_{s'})=\pm[2]_i^{-1}\gamma_{s}^{\pm 1/2}\,\bigl[\,a_i^{(s)}(\epsilon),\,x_{i}^{\pm}(\epsilon' e_{s'})\,\bigr],\\
\dot{x}_i^{\pm}(\epsilon e_s+\epsilon'e_{s'})=\pm[2]_i^{-1}\gamma_{s'}^{\pm 1/2}\,\bigl[\,a_i^{(s')}(\epsilon'),\,x_{i}^{\pm}(\epsilon e_s)\,\bigr],
\end{gather*}
which represent the same elements in the algebra $U_{q}(\mathfrak {g}_{N,tor})$. So for $i\in I$ and $s\neq s'\in J$,
$$x_i^{-\epsilon}(\epsilon e_s+\epsilon'e_{s'})- \dot{x}_i^{-\epsilon}(\epsilon e_s+\epsilon'e_{s'})\in \ker \pi_1.$$	

\noindent (2). Note that in the algebra $U_{q}(\mathfrak {g}_{N,tor})$, we have that $[x_i^{\pm}(ke_s),x_{i}^{\pm}(le_{s'})\,]=0$, so
$$[x_i^{\pm}(ke_s),x_{i}^{\pm}(le_{s'})\,]\in \ker\pi_1.$$
%In a similar way, the left hand side of relation \eqref{n:Dr12} is in $\ker\pi_1$. \\

\noindent (3). In type $A$, since the affine Dynkin diagram %of affine Lie algebra $\hat{\mathfrak{sl}}_{n+1}$
is a cycle, for $\epsilon=\pm1$ or $\pm$ and $s\in J$, we define that
\begin{gather*}
y_n^{\pm}(\epsilon e_s)=\pm \gamma_s^{\pm\frac{1}{2}}\,\bigl[\,a^{(s)}_0(\epsilon),\,x_n^{\pm}(\underline{0})\,\bigr],\\
b_n^{(s)}(1)=\gamma^{\frac{1}{2}}K_n^{-1}\,\bigl[\,x_{n}^+(\underline{0}),\,y_n^{-}(e_s)\,\bigr],\\
b_n^{(s)}(-1)=\gamma^{-\frac{1}{2}}K_n\,\bigl[\,y_{n}^+(-e_s),\,x_n^{-}(\underline{0})\,\bigr].
\end{gather*}
Moreover, we inductively define for $i=n-1, \cdots, 0$
\begin{gather*}
y_i^{\pm}(\epsilon e_s)=\pm \gamma_s^{\pm\frac{1}{2}}\,\bigl[\,b_{i+1}^{(s)}(\epsilon),\,x_i^{\pm}(\underline{0})\,\bigr],\\
b_{i}^{(s)}(1)=\gamma_s^{\frac{1}{2}}K_i^{-1}\,\bigl[\,x_{i}^+(\underline{0}),\,y_i^{-}(e_s)\,\bigr],\\
b_i^{(s)}(-1)=\gamma_s^{-\frac{1}{2}}K_i\,\bigl[\,y_{i}^+(-e_s),\,x_i^{-}(\underline{0})\,\bigr].
\end{gather*}
Furthermore, we also define that for $\underline{k}\in \mathbb{Z}^{N-1}$ and $i\in I$
\begin{gather*}
y_i^{\pm}(\underline{k})=\pm[2]_i^{-1}\gamma_{1}^{\pm 1/2}\,\bigl[\,b_i^{(N-1)}(k_{N-1}),\,x_{i}^{\pm}(k_1e_1+\cdots+k_{N-2}e_{N-2})\,\bigr].
\end{gather*}

For $\underline{k}\in \mathbb{Z}^{N-1}$, $\ell\in \mathbb{Z}^*$, $i\in I$ and $s\in J$
\begin{equation*}\label{ideal1}
x_i^{\pm}(\underline{k})-y_i^{\pm}(\underline{k})\in \ker\pi_1, \quad a_i^{(s)}(\ell)- b_i^{(s)}(\ell)\in \ker\pi_1.
\end{equation*}
\end{remark}

\section{Vertex representations of quantum $N$-toroidal algebras $ U_{q}(\mathfrak{g}_{N, tor})$  for simply-laced type}

In this section, we will construct a level-one vertex representation of the quantum $N$-toroidal algebra for simply-laced type via generating functions (c.f. Def. 3.2).

 Let
$I=\{0, 1, \cdots, n\}$ and $I_0=\{1, \cdots, n\}$. Let $\mathfrak{g}$ be a finite dimensional simple Lie algebra of simply-laced type over $\mathbb{K}$ with the Cartan matrix $(a_{ij})_{i,j\in I_0}$.  Denote by $\hat{\mathfrak{g}}$ the affine Kac-Moody Lie algebra associated to $\mathfrak{g}$
and its %of type $\widehat{\mathfrak{sl}}_{n+1}$ with
Cartan matrix by $(a_{ij})_{i,j\in I}$. Let $\mathfrak{h}$ and  $\hat{\mathfrak{h}}$ be their Cartan subalgebras,  %of $\mathfrak{g}$ and $\hat{\mathfrak{g}}$
$\Delta$ and $\hat{\Delta}$ their root systems, respectively. Also let
$\Pi=\{\bar{\alpha}_1,\cdots, \bar{\alpha}_n\}$ be a basis of $\Delta$, where $\alpha_0,\,\alpha_1,\cdots, \alpha_n$ are the simple roots of $\hat{\mathfrak{g}}$.

Let $\bar{Q}=\bigoplus_{i=1}^{n}\mathbb{Z}\bar{\alpha}_i$
and $Q=\bigoplus_{i=0}^{n}\mathbb{Z}\alpha_i$ be the root lattices of $\mathfrak{g}$ and $\hat{\mathfrak{g}}$ respectively.
The affine weight lattice is
$P = \bigoplus_{i=0}^{n}\mathbb{Z}\Lambda_i\bigoplus \mathbb{Z}\delta$,
where $\Lambda_0, \cdots, \Lambda_n$ are the fundamental weights of $\hat{\mathfrak{g}}$ and $\delta$ is the baisc imaginary root.
	Let $U_{q}$($\widehat{\frak h}$) be the associative algebra generated by $\{\,a_i(l) \mid
	l\in\mathbb{Z}\backslash \{0\},\,i\in I\,\}$, satisfying the following relations for $m,\, l \in \mathbb{Z}\backslash \{0\}$,
	\begin{eqnarray}
	\label{6.15}
&& [\,a_i(m), a_j(l)\,]=\delta_{m+l, 0} \frac{[ma_{ij}]}{m}[m].
	\end{eqnarray}
The algebra $U_{q}$($\widehat{\frak h})$ is a Weyl algebra that deforms the enveloping algebra of the Heisenberg algebra.
%\end{defi}

We denote by
$U_{q}$($\widehat{\frak h}^+$) (resp.$U_{q}$($\widehat{\frak h}^-$) \,) the commutative subalgebra of
$U_{q}$($\widehat{\frak h}$) generated by $a_i(l)$ (resp.
$a_i(-l)$) with $l\in\mathbb{Z}_{>0}$, $i\in I, j\in J$. Let
S($\widehat{\frak h}^-$) be the symmetric algebra generated by $a_i(-l)$ with $l\in\mathbb{Z}_{>0}$. Then S($\widehat{\frak h}^-$) is a
$U_{q}$($\widehat{\frak h}$, $N$-tor)-module with the action defined by
\begin{gather*}
\gamma_s^{\pm\frac1{2}}\cdot v=q^{\pm\frac1{2}}\,v,\\
a_i^{(s)}(-l)\cdot v=a_i(-l)\,v,\\
a_i^{(s)}(l)\cdot v=\frac{[la_{ij}]}{l}\frac{q^{l}-q^{-l}}{q-q^{-1}} \frac{d\, v}{d\, a_i(-l)}.
\end{gather*}
for any $v\in \rm{S}$($\widehat{\frak h}^-$), $l\in\mathbb{Z}_{>0}$ and
$i\in I$.

%\subsection{Vertex representation of $ U_q(\mathfrak{g}_{N, tor})$ for simply-laced types}\,

Let $\varepsilon( \ \ , \ \ )$ be the 2-cocycle of the root lattice $Q$ such that
\begin{eqnarray*}
   \varepsilon(\alpha,\,\beta)&=&(-1)^{(\alpha,\,\beta)}\varepsilon(\beta,\,\alpha).
     \end{eqnarray*}

Let $\mathbb{K}\{Q\}=\sum\limits_{\alpha\in Q }\mathbb{K} e^{\alpha}$ be the twisted group algebra spanned by $e^{\alpha} \, (\alpha\in Q)$ with multiplication: $e^{\alpha}e^{\beta}=\varepsilon(\alpha,\beta)e^{\alpha+\beta}$.
Define the Fock space $\mathcal{F}=S(\widehat{\frak h}^-)\otimes \mathbb{K}\{Q\}$, and
extend the action of $U_{q}$($\widehat{\frak h}^+$) naturally.
The operators $K_i$, $q^{d}$ and $z^{a_i(0)}$ act on  $\mathcal{F}$ as follows ($v\otimes e^{\beta}\in \mathcal{F}$):
\begin{gather*}
%a_i(-l)(v\otimes e^{\beta})=(a_i(-l)v)\otimes e^{\beta},\\
e^{\alpha}(v\otimes e^{\beta})=v\otimes e^{\alpha}e^{\beta},\\
a_i(0)(v\otimes e^{\beta})={(\alpha_i,\,\beta)}v\otimes e^{\beta},\\
z^{a_i(0)}(v\otimes e^{\beta})=z^{(\alpha_i,\,\beta)}v\otimes e^{\beta},\\
q^{d}(v\otimes e^{\beta})=q^{m_0}(v\otimes e^{\beta}),
\end{gather*}
where $\beta=\sum\limits_{i=0}^{n}m_i\alpha_i\in Q$.
Let $ : ~~ :$ be the usual normal order defined by moving modes with lower degrees to the left and
%\begin{eqnarray*}
%&&:a_i(m)a_j(l):=\left\{\begin{array}{cl}
%a_i(m)a_j(l), & \ m \leq l;
%\vspace{3mm}\\
%a_i(l)a_j(m),
%& \ m> l,\\
%\end{array}\right.
%\end{eqnarray*}
%\begin{eqnarray*}
$e^{\alpha_i}
a_i(0):=:a_i(0)e^{\alpha_i}:=e^{\alpha_i}a_i(0)$.

Now define the vertex operators:
$$\begin{array}{rcl}
%&&Y_{i}^\pm(\underline{z})=\exp\left(\pm\sum\limits_{s=1}^{N-1}\sum\limits_{k_s=1}^\infty\dfrac{a_i^{(s)}(-k_s)}{[k_s]}q^{\mp k/2}z_s^{k_s}\right)\exp\left(\mp\sum\limits_{s=1}^{N-1}\sum\limits_{k_s=1}^\infty\dfrac{a_i^{(s)}(k_s)}{[k_s]}q^{\mp k_s/2}z_s^{-k_s}\right)\\
%&& \hskip8cm\times e^{\pm \alpha_i}\prod\limits_{s=1}^{N-1}z_s^{\pm a_i(0)},\\
&&Y_{i}^\pm(z)=\exp\left(\pm\sum\limits_{k=1}^\infty\dfrac{a_i(-k)}{[k]}q^{\mp k/2}z^{k}\right)\exp\left(\mp\sum\limits_{k=1}^\infty\dfrac{a_i(k)}{[k]}q^{\mp k/2}z^{-k}\right)\\
&& \hskip5cm\times e^{\pm \alpha_i}z^{\pm a_i(0)}=\sum\limits_{n\in \mathbb{Z}}Y_i^{\pm}(n)z^{-n},\\
&&\Phi_i(z) =  q^{a_i(0)}\exp \Big(
(q{-}q^{-1})\sum\limits_{\ell=1}^{\infty}
a_i(\ell)z^{-\ell}\Big), \\
&&\Psi_i(z)= q^{-a_i(0)}\exp
\Big({-}(q{-}q^{-1})
\sum\limits_{\ell=1}^{\infty}a_i(-\ell)z^{\ell}\Big).
\end{array}
$$
%Denote that  $Y_i^{\pm}(z)=\sum\limits_{n\in \mathbb{Z}}Y_i^{\pm}(n)z^{-n}$.

%Denote the coefficient of $z^0$ in ${Y}_{i,s}^{\pm}(z)$ by $C_{i,s}^{\pm}(\underline{0})$ for $s\in J$ and
%let $\hat{Y}_{i,s}^{\pm}(z)={Y}_{i,s}^{\pm}(z)-C_{i,s}^{\pm}(\underline{0})$.
%Then
%$$C_{i,s}^{\pm}(\underline{0})=\frac{1}{2\pi i}\oint {Y}_{i,s}^{\pm}(z)\frac{dz}{z},$$
%where $\oint$ is the integration contour positively encircling $z=0$. Let

\begin{theo}\label{maintheo3}\, For $i\in I$ and $s\in J$, the  Fock
	space $\mathcal F$ is a level one $ U_q(\mathfrak{g}_{N, tor})$-module for simply-laced types under the action  $\rho$ defined by:
	$$\begin{array}{rcl}
		\gamma_s^{\pm\frac{1}{2}}&\mapsto& q^{\pm\frac{1}{2}},\\
	q^{\pm d} &\mapsto&  q^{\pm d}, \\
	K_i &\mapsto&  q^{a_i(0)},  \\
%x_{i}^\pm(\underline{z})&\mapsto&Y_{i}^\pm({\underline{z}}),\\	
x_{i}^\pm(\underline{k})&\mapsto&Y_{i}^\pm(\rm{ht}(\underline{k})),\\
%	\hat{x}_{i,s}^\pm({z})&\mapsto&\hat{Y}_{i,s}^\pm({z}),\\
\phi_i^{(s)}(z) &\mapsto&\Phi_i(z) ,\\
	\varphi_i^{(s)}(z) &\mapsto& \Psi_i(z),
	\end{array}$$
where $\rm{ht}(\underline{k})\doteq k_1+\cdots+k_{N-1}$ for $\underline{k}=(k_1,\cdots, k_{N-1})$.
\end{theo}

This result can be checked directly by noting that the map specified in the theorem is in fact
a homomorphism from the quantum $N$-toroidal algebra to the quantum toroidal algebra
and then using the Fock space representation constructed in \cite{Sy}.

\section{Appendix}

In the appendix, we will list the Dynkin diagrams case by case according to the type of $\mathfrak g$
and GIM $M$ given in Definition \ref{defi3.10}. Here if $m_{ij}\in M$ such that $m_{ij}>0$ for $i\neq j$,
we use dotted lines to replace the edges of the Dynkin diagram for general Cartan matrix,
and we keep other rules of the Dynkin diagram for  Cartan matrix. We give the  Dynkin diagrams for the case of $N=2$ and $N=3$, respectively.

 \subsection{Dynkin diagrams for the case of $N=2$}

(I).\, Type $A_n (n>1)$:
 %$$M=\left(\begin{array}{rrrrrrr}
% 2& 2 & -1 & 0 & \cdots & 0 & -1 \\
% 2 & 2 & -1 & 0  & \cdots & 0 & -1\\
% -1 & -1 & 2 & -1  & \cdots & 0 & 0\\
% \cdots &\cdots &\cdots  &\cdots & \cdots & \cdots & \cdots\\
% 0 & 0 & 0 & 0 & \cdots &2 & -1\\
% -1 & -1 & 0& 0 & \cdots & -1 & 2
% \end{array}\right),$$

 \begin{center}
 	\begin{tikzpicture}
 	\draw node at(5.1,1.2){0} (5.1,1.8)circle[radius=0.15];
 	\draw node at(5.1,4.28){-1} (5.1,3.78)circle[radius=0.15];
 	\draw[style=dashed] [<->=angle 90] (5.21,1.9)--(5.21,3.68);
 	\draw[style=dashed] [<->=angle 90] (5.01,1.9)--(5.01,3.68);
 	\draw (9.04,0)--(10.18,0);
 	\draw [=angle 30] (0.71,0)--(4.95,3.82);
 	\draw [=angle 18] (0.71,0)--(4.99,1.78);
 	\draw [=angle -18] (5.22,1.78)--(10.3,0);
 	\draw [=angle -30] (5.22,3.78)--(10.3,0);
 	%\draw  node at(5.1,1.8){}(5.1,1.8)circle[radius=0.15]
 	\draw node{} node at(0.6,-0.56){1} (0.6,0)circle[radius=0.15] (0.78,0)--(2,0);
 	\draw node at(2.2,-0.56){2} (2.2,0)circle[radius=0.15] (2.4,0)--(3.6,0);
 	\draw  node at(3.8,-0.56){3} (3.8,0)circle[radius=0.15](4,0)--(5.2,0);
 	\draw  node at(5.53,0){$\cdots$} (5.8,0)--(6.98,0);
 	\draw node{} node at(7.2,-0.56){n-2} (7.2,0)circle[radius=0.15] (7.42,0)--(8.58,0);
 	\draw node at(8.8,-0.56){n-1} (8.8,0)circle[radius=0.15];
 	\draw (9.04,0)--(10.18,0);
 	\draw node at(10.4,-0.56){n} (10.4,0)circle[radius=0.15];
 	\end{tikzpicture}
 \end{center}

(II).\, Type $B_n (n>2)$:
 %$$M=\left(\begin{array}{rrrrrrr}
% 2& 2 & 0 & -1 & \cdots & 0 & 0 \\
% 2 & 2 & 0 & -1  & \cdots & 0 & 0\\
% 0 & 0 & 2 & -1  & \cdots & 0 & 0\\
% -1 & -1 & -1 & 2  & \cdots & 0 & 0\\
% \cdots &\cdots &\cdots  &\cdots & \cdots & \cdots & \cdots\\
% 0 & 0 & 0 & 0 & \cdots &2 & -1\\
% 0 & 0 & 0& 0 & \cdots & -2 & 2
% \end{array}\right),$$

 \begin{center}
 	\begin{tikzpicture}
 	\draw node at(-0.6,-1.9){0} (-0.6,-1.4)circle[radius=0.15];
 	\draw node at(-0.6,2.5){-1} (-0.6,1.85)circle[radius=0.15];
 	\draw[style=dashed] [<->=angle 90](-0.53,-1.28)--(-0.53,1.75);
 	\draw[style=dashed] [<->=angle 90](-0.73,-1.28)--(-0.73,1.75);
 	\draw  (-0.46,-1.34)--(2.15,-0.15);
 	\draw  (-0.46,1.8)--(2.15,0.15);
 	\draw node{} node at(0.6,-0.56){1} (0.6,0)circle[radius=0.15] (0.78,0)--(2,0);
 	\draw node at(2.2,-0.56){2} (2.2,0)circle[radius=0.15] (2.4,0)--(3.6,0);
 	\draw  node at(3.8,-0.56){3} (3.8,0)circle[radius=0.15](4,0)--(5.2,0);
 	\draw  node at(5.53,0){$\cdots$} (5.8,0)--(6.98,0);
 	\draw node{} node at(7.2,-0.56){n-2} (7.2,0)circle[radius=0.15] (7.42,0)--(8.58,0);
 	\draw node at(8.8,-0.56){n-1} (8.8,0)circle[radius=0.15];
 	\draw[->,>=left to] (8.92,0.12)--(10.3,0.12);
 	\draw[->,>=right to] (8.92,-0.12)--(10.3,-0.12);
 	\draw node at(10.4,-0.56){n} (10.4,0)circle[radius=0.15];
 	\end{tikzpicture}
 \end{center}

(III).\, Type $C_n ( n>1)$:
 %$$M=\left(\begin{array}{rrrrrrr}
% 2& 2 & -1 & 0 & \cdots & 0 & 0 \\
% 2 & 2 & -1 & 0  & \cdots & 0 & 0\\
% -2 & -2 & 2 & -1  & \cdots & 0 & 0\\
% 0 & 0 & -1 & 2  & \cdots & 0 & 0\\
% \cdots &\cdots &\cdots  &\cdots & \cdots & \cdots & \cdots\\
% 0 & 0 & 0 & 0 & \cdots &2 & -2\\
% 0 & 0 & 0& 0 & \cdots & -1 & 2
% \end{array}\right),$$

 \begin{center}
 	\begin{tikzpicture}
 	\draw node at(-0.6,-1.9){0} (-0.6,-1.4)circle[radius=0.15];
 	\draw node at(-0.6,2.5){-1} (-0.6,1.85)circle[radius=0.15];
 	\draw[style=dashed] [<->=angle 90](-0.53,-1.28)--(-0.53,1.75);
 	\draw[style=dashed] [<->=angle 90](-0.73,-1.28)--(-0.73,1.75);
 	\draw[->=angle](-0.56,-1.24)--(0.46,-0.08);
 	\draw[->=angle](-0.46,-1.34)--(0.58,-0.18);
 	\draw[->=angle](-0.50,1.65)--(0.46,0.1);
 	\draw[->=angle](-0.45,1.78)--(0.56,0.16);
 	\draw node{} node at(0.6,-0.56){1} (0.6,0)circle[radius=0.15] (0.78,0)--(2,0);
 	\draw node at(2.2,-0.56){2} (2.2,0)circle[radius=0.15] (2.4,0)--(3.6,0);
 	\draw  node at(3.8,-0.56){3} (3.8,0)circle[radius=0.15](4,0)--(5.2,0);
 	\draw  node at(5.53,0){$\cdots$} (5.8,0)--(6.98,0);
 	\draw node{} node at(7.2,-0.56){n-2} (7.2,0)circle[radius=0.15] (7.42,0)--(8.58,0);
 	\draw node at(8.8,-0.56){n-1} (8.8,0)circle[radius=0.15];
 	\draw[<-] (8.92,0.12)--(10.3,0.12);
 	\draw[<-] (8.92,-0.12)--(10.3,-0.12);
 	\draw node at(10.4,-0.56){n} (10.4,0)circle[radius=0.15];
 	\end{tikzpicture}
 \end{center}

(IV).\, Type $D_n (n>3)$:
 %$$M=\left(\begin{array}{rrrrrrrr}
% 2& 2 & 0 & -1 & \cdots & 0 & 0 & 0 \\
% 2 & 2 & 0 & -1  & \cdots & 0 & 0 & 0\\
% 0 & 0 & 2 & -1  & \cdots & 0 & 0 & 0\\
% -1 & -1 & -1 & 2  & \cdots & 0  & 0 & 0\\
% \cdots &\cdots &\cdots  &\cdots &\cdots & \cdots & \cdots & \cdots\\
% 0 & 0 & 0 & 0 & \cdots & 2  &0 & -1\\
% 0 & 0 & 0 & 0 & \cdots& 0 &2 & 0\\
% 0 & 0 & 0& 0 & \cdots& -1  & 0 & 2
% \end{array}\right)$$

 \begin{center}
 	\begin{tikzpicture}
 	\draw node at(-0.6,-1.9){0} (-0.6,-1.4)circle[radius=0.15];
 	\draw node at(-0.6,2.5){-1} (-0.6,1.85)circle[radius=0.15];
 	\draw[style=dashed] [<->=angle 90](-0.53,-1.28)--(-0.53,1.75);
 	\draw[style=dashed] [<->=angle 90](-0.73,-1.28)--(-0.73,1.75);
 	\draw  (-0.46,-1.34)--(2.15,-0.15);
 	\draw  (-0.46,1.8)--(2.15,0.15);
 	\draw node{} node at(0.6,-0.56){1} (0.6,0)circle[radius=0.15] (0.78,0)--(2,0);
 	\draw node at(2.2,-0.56){2} (2.2,0)circle[radius=0.15] (2.4,0)--(3.6,0);
 	\draw  node at(3.8,-0.56){3} (3.8,0)circle[radius=0.15](4,0)--(5.2,0);
 	\draw  node at(5.53,0){$\cdots$} (5.8,0)--(6.98,0);
 	\draw node{} node at(7.2,-0.56){n-2} (7.2,0)circle[radius=0.15] (7.42,0)--(8.58,0);
 	\draw node at(8.8,-0.56){n-2} (8.8,0)circle[radius=0.15];
 	\draw (8.92,0.12)--(10.3,1.75);
 	\draw (8.92,-0.12)--(10.3,-1.35);
 	\draw node at(10.4,2.5){n-1} (10.4,1.85)circle[radius=0.15];
 	\draw node at(10.4,-1.9){n} (10.4,-1.4)circle[radius=0.15];
 	\end{tikzpicture}
 \end{center}

(V).\, Type $E_6$:
 %$$M=\left(\begin{array}{rrrrrrrr}
% 2& 2 & 0 & -1 & 0 & 0  & 0 & 0 \\
% 2 & 2 &0 & -1 & 0 & 0  & 0 & 0\\
% 0 & 0 & 2 & 0  & -1 & 0 & 0  & 0 \\
% -1 & -1 & 0 & 2  & 0 & -1 & 0  & 0 \\
% 0 & 0 & -1 & 0  & 2 & -1 & 0  & 0 \\
% 0 & 0 & 0 & -1  & -1 & 2 & -1  & 0 \\
% 0 & 0 & 0 & 0  & 0 & -1 & 2  & -1 \\
% 0 & 0 & 0& 0  & -1 & 0 & -1  & 2 \\
% \end{array}\right),$$

 \begin{center}
 	\begin{tikzpicture}
 	\draw node at(3.8,4.8){0} (3.8,4.3)circle[radius=0.15];
 	\draw node at(7.2,4.8){-1} (7.2,4.3)circle[radius=0.15];
 	\draw[style=dashed] [<->=angle 90](3.9,4.4)--(7.1,4.4);
 	\draw[style=dashed] [<->=angle 90](3.9,4.2)--(7.1,4.2);
 	\draw  (5.48,1.95)--(3.73,4.2);
 	\draw  (5.58,1.95)--(7.25,4.2);
 	\draw node at(2.2,-0.56){1} (2.2,0)circle[radius=0.15] (2.4,0)--(3.6,0);
 	\draw  node at(3.8,-0.56){3} (3.8,0)circle[radius=0.15](4,0)--(5.35,0);
 	\draw  node at(5.53,-0.56){4} (5.53,0)circle[radius=0.15]
 	(5.8,0)--(6.98,0);
 	\draw node at(5.53,2.5){2} (5.53,1.85)circle[radius=0.15];
 	\draw(5.53,0.14)--(5.53,1.7);
 	\draw node{} node at(7.2,-0.56){5} (7.2,0)circle[radius=0.15] (7.42,0)--(8.58,0);
 	\draw node at(8.8,-0.56){6} (8.8,0)circle[radius=0.15];
 	\end{tikzpicture}
 \end{center}

(VI).\, Type $E_7$:
 %$$M=\left(\begin{array}{rrrrrrrrr}
% 2& 2 & -1 & 0 & 0 & 0  & 0 & 0 & 0\\
% 2 & 2 &-1 & 0 & 0 & 0  & 0 & 0& 0\\
% -1 & -1 & 2 & 0  & -1 & 0 & 0  & 0 & 0\\
% 0 & 0 & 0 & 2  & 0 & -1 & 0  & 0 & 0\\
% 0 & 0 & -1 & 0  & 2 & -1 & 0  & 0 & 0\\
% 0 & 0 & 0 & -1  & -1 & 2 & -1  & 0 & 0\\
% 0 & 0 & 0 & 0  & 0 & -1 & 2  & -1 & 0\\
% 0 & 0 & 0& 0  &0 & 0 & -1  & 2 &-1 \\
% 0 & 0 & 0& 0  & 0 & 0 & 0  & -1 &2
% \end{array}\right)$$

 \begin{center}
 	\begin{tikzpicture}
 	\draw node at(0.6,-1.9){0} (0.6,-1.4)circle[radius=0.15];
 	\draw node at(0.6,2.5){-1} (0.6,1.85)circle[radius=0.15];
 	\draw[style=dashed] [<->=angle 90](0.7,-1.28)--(0.7,1.75);
 	\draw[style=dashed] [<->=angle 90](0.5,-1.28)--(0.5,1.75);
 	\draw  (0.7,-1.34)--(2.15,-0.15);
 	\draw  (0.7,1.8)--(2.15,0.15);
 	\draw node at(2.2,-0.56){1} (2.2,0)circle[radius=0.15] (2.4,0)--(3.6,0);
 	\draw  node at(3.8,-0.56){3} (3.8,0)circle[radius=0.15](4,0)--(5.35,0);
 	\draw  node at(5.53,-0.56){4} (5.53,0)circle[radius=0.15]
 	(5.8,0)--(6.98,0);
 	\draw node at(5.53,2.5){2} (5.53,1.85)circle[radius=0.15];
 	\draw(5.53,0.14)--(5.53,1.7);
 	\draw node{} node at(7.2,-0.56){5} (7.2,0)circle[radius=0.15] (7.42,0)--(8.58,0);
 	\draw node at(8.8,-0.56){6} (8.8,0)circle[radius=0.15];
 	\draw(8.9,0)--(10.3,0);
 	\draw node at(10.4,-0.56){7} (10.4,0)circle[radius=0.15];
 	\end{tikzpicture}
 \end{center}

(VII).\, Type $E_8$:
 %$$M=\left(\begin{array}{rrrrrrrrrr}
% 2& 2 & 0 & 0 & 0 & 0  & 0 & 0 & 0 & -1\\
% 2 & 2 &0 & 0 & 0 & 0  & 0 & 0& 0 &-1\\
% 0 & 0 & 2 & 0  & -1 & 0 & 0  & 0 & 0& 0\\
% 0 & 0 & 0 & 2  & 0 & -1 & 0  & 0 & 0& 0\\
% 0 & 0 & -1 & 0  & 2 & -1 & 0  & 0 & 0& 0\\
% 0 & 0 & 0 & -1  & -1 & 2 & -1  & 0 & 0& 0\\
% 0 & 0 & 0 & 0  & 0 & -1 & 2  & -1 & 0& 0\\
% 0 & 0 & 0& 0  & 0 & 0 & -1  & 2 &-1& 0 \\
% 0 & 0 & 0& 0  & 0 & 0 & 0  & -1 &2& -1\\
% -1 & -1 & 0& 0  & 0 & 0 & 0  & 0 &-1& 2
% \end{array}\right),$$

 \begin{center}
 	\begin{tikzpicture}
 	\draw node at(-0.1,-0.56){1} (-0.1,0)circle[radius=0.15] (0.085,0)--(1.15,0);
 	\draw  node at(1.3,-0.56){3} (1.3,0)circle[radius=0.15](1.45,0)--(2.45,0);
 	\draw  node at(2.6,-0.56){4} (2.6,0)circle[radius=0.15]
 	(2.75,0)--(3.95,0);
 	\draw node at(2.6,2.5){2} (2.6,1.85)circle[radius=0.15];
 	\draw(2.6,0.14)--(2.6,1.7);
 	\draw node{} node at(4.0,-0.56){5} (4.15,0)circle[radius=0.15] (4.3,0)--(5.45,0);
 	\draw node at(5.58,-0.56){6} (5.58,0)circle[radius=0.15];
 	\draw(5.75,0)--(6.9,0);
 	\draw node at(7.05,-0.56){7} (7.05,0)circle[radius=0.15];
 	\draw node at(8.45,-0.56){8} (8.45,0)circle[radius=0.15];
 	\draw  (7.2,-0)--(8.3,0);
 	\draw node at(9.7,-1.9){0} (9.7,-1.4)circle[radius=0.15];
 	\draw node at(9.7,2.5){-1} (9.7,1.85)circle[radius=0.15];
 	\draw[style=dashed] [<->=angle 90](9.6,-1.28)--(9.6,1.75);
 	\draw[style=dashed] [<->=angle 90](9.8,-1.28)--(9.8,1.75);
 	\draw  (8.56,0.05)--(9.55,1.75);
 	\draw  (8.56,-0.05)--(9.55,-1.23);
 	\end{tikzpicture}
 \end{center}

(VIII).\, Type $F_4$:
 %$$M=\left(\begin{array}{rrrrrrr}
% 2& 2 & -1 & 0  & 0 & 0 \\
% 2 & 2 & -1 & 0  &0 & 0 \\
% -1 & -1 & 2 & -1  &0 & 0\\
% 0 & 0 & -1 & 2 &-1 &0\\
% 0 &0 & 0& -2 &2 & -1 \\
% 0 &0 & 0& 0 &-1 & 2 \\
% \end{array}\right),$$

 \begin{center}
 	\begin{tikzpicture}
 	\draw node at(0.6,-1.9){0} (0.6,-1.4)circle[radius=0.15];
 	\draw node at(0.6,2.5){-1} (0.6,1.85)circle[radius=0.15];
 	\draw[style=dashed] [<->=angle 90](0.7,-1.28)--(0.7,1.75);
 	\draw[style=dashed] [<->=angle 90](0.5,-1.28)--(0.5,1.75);
 	\draw  (0.7,-1.34)--(2.15,-0.15);
 	\draw  (0.7,1.8)--(2.15,0.15);
 	\draw node at(2.2,-0.56){1} (2.2,0)circle[radius=0.15] (2.4,0)--(3.6,0);
 	\draw  node at(3.8,-0.56){2} (3.8,0)circle[radius=0.15];
 	\draw[->] (3.86,0.12)--(5.42,0.12);
 	\draw[->] (3.86,-0.12)--(5.42,-0.12);
 	\draw  node at(5.53,-0.56){3} (5.53,0)circle[radius=0.15]
 	(5.68,0)--(7.05,0);
 	\draw node{} node at(7.2,-0.56){4} (7.2,0)circle[radius=0.15];
 	\end{tikzpicture}
 \end{center}

(IX).\, Type $G_2$:
 %$$M=\left(\begin{array}{rrrr}
% 2& 2 & -1 & 0  \\
% 2 & 2 & -1 & 0 \\
% -1 & -1 & 2 & -1 \\
% 0 & 0 &-3 & 2
% \end{array}\right),$$

 \begin{center}
 	\begin{tikzpicture}
 	\draw node at(0.6,-1.9){0} (0.6,-1.4)circle[radius=0.15];
 	\draw node at(0.6,2.5){-1} (0.6,1.85)circle[radius=0.15];
 	\draw[style=dashed] [<->=angle 90](0.7,-1.28)--(0.7,1.75);
 	\draw[style=dashed] [<->=angle 90](0.5,-1.28)--(0.5,1.75);
 	\draw  (0.75,-1.34)--(2.15,-0.15);
 	\draw  (0.7,1.8)--(2.15,0.15);
 	\draw node at(2.2,-0.56){1} (2.2,0)circle[radius=0.15];
 	\draw[->](2.32,0.12)--(3.68,0.12);
 	\draw[->](2.35,0)--(3.68,0);
 	\draw[->](2.32,-0.12)--(3.68,-0.12);
 	\draw  node at(3.8,-0.56){2} (3.8,0)circle[radius=0.15];
 	\end{tikzpicture}
 \end{center}

\subsection{Dynkin diagrams for the case of $N=3$}
(I).\, Type $A_n$, $(n>1)$:
\begin{center}
	\begin{tikzpicture}
	\draw node at(5.1,1.4){0} (5.1,1.8)circle[radius=0.15];
	\draw node at(4.75,2.4){-1} (5.1,2.8)circle[radius=0.15];
	\draw node at(4.6,3.78){-2} (5.1,3.78)circle[radius=0.15];
	\draw[style=dashed] [<->=angle 90] (5.21,1.9)--(5.21,2.68);
	\draw[style=dashed] [<->=angle 90] (5.01,1.9)--(5.01,2.68);
	\draw[style=dashed] [<->=angle 90] (5.21,2.98)--(5.21,3.68);
	\draw[style=dashed] [<->=angle 90] (5.01,2.98)--(5.01,3.68);
	\draw (0.78,0.05)--(4.95,2.75);
	\draw (10.32,0.01)--(5.22,2.75);
	%\draw [style=dashed][<->=angle 90](5.01,3.95)--(5.01,4.85);
	%\draw  [style=dashed][<->=angle 90](5.21,3.95)--(5.21,4.85);
	%\draw [densely dotted](5.1,4.9)--(5.1,5.3);
	%\draw [style=dashed][<->=angle 90](5.01,5.33)--(5.01,6.25);
	%\draw  [style=dashed][<->=angle 90](5.21,5.35)--(5.21,6.25);
	%\draw node at(5.8,6.35){-n+1} (5.1,6.35)circle[radius=0.15];
	%\draw (0.78,0.05)--(5,6.33);
	%\draw (10.32,0.01)--(5.22,6.33);
	\draw [style=dashed][<->=angle 90](5.16,1.9)arc(-89:87:0.95);
	\draw [style=dashed][<->=angle 90](5.16,1.75)arc(-89:87:1.1);
	\draw (0.71,0)--(4.95,3.82);
	\draw (0.71,0)--(4.99,1.78);
	\draw (5.22,1.78)--(10.3,0);
	\draw (5.22,3.78)--(10.3,0);
	%\draw  node at(5.1,1.8){}(5.1,1.8)circle[radius=0.15]
	\draw node{} node at(0.6,-0.56){1} (0.6,0)circle[radius=0.15] (0.78,0)--(2,0);
	\draw node at(2.2,-0.56){2} (2.2,0)circle[radius=0.15] (2.4,0)--(3.6,0);
	\draw  node at(3.8,-0.56){3} (3.8,0)circle[radius=0.15](4,0)--(5.2,0);
	\draw  node at(5.53,0){$\cdots$} (5.8,0)--(6.98,0);
	\draw node{} node at(7.2,-0.56){n-2} (7.2,0)circle[radius=0.15] (7.42,0)--(8.58,0);
	\draw node at(8.8,-0.56){n-1} (8.8,0)circle[radius=0.15];
	\draw (9.04,0)--(10.18,0);
	\draw node at(10.4,-0.56){n} (10.4,0)circle[radius=0.15];
	\draw node{} node at(5.5,-1.26){For N=3};
	\end{tikzpicture}
\end{center}

(II). \, Type $B_n$, $(n>2)$:
\begin{center}
	\begin{tikzpicture}
	%\draw node at(-1.3,-3.6){0} (-0.6,-3.6)circle[radius=0.15];
	\draw node at(-0.6,-2.7){0} (-0.6,-2.3)circle[radius=0.15];
	\draw node at(-0.6,1.65){-2} (-0.6,1.25)circle[radius=0.15];
	%\draw[style=dashed] [<->=angle90](-0.53,-3.48)--(-0.53,-2.41);
	%\draw[style=dashed] [<->=angle90](-0.73,-3.48)--(-0.73,-2.41);
	\draw node at(-1.2,-0.35){-1} (-0.6,-0.35)circle[radius=0.15];
	\draw[style=dashed] [<->=angle90](-0.53,-2.18)--(-0.53,-0.5);
	\draw[style=dashed] [<->=angle90](-0.73,-2.18)--(-0.73,-0.5);
	\draw[style=dashed] [<->=angle90](-0.53,-0.2)--(-0.53,1.16);
	\draw[style=dashed] [<->=angle90](-0.73,-0.2)--(-0.73,1.16);
	\draw [style=dashed][<->=angle 90](-0.73,-2.4)arc(269:91:1.86);
	\draw [style=dashed][<->=angle 90](-0.73,-2.2)arc(269:91:1.66);
	%\draw [densely dotted](-0.6,-0.15)--(-0.6,0.3);	
	%\draw[style=dashed] [<->=angle90](-0.53,0.31)--(-0.53,1.14);
	%\draw[style=dashed] [<->=angle90](-0.73,0.31)--(-0.73,1.14);
	%\draw[style=dashed] [<->=angle90](-0.53,1.35)--(-0.53,2.45);
	%\draw[style=dashed] [<->=angle90](-0.73,1.35)--(-0.73,2.45);
	%\draw node at(-1.3,2.6){-n+1} (-0.6,2.6)circle[radius=0.15];
	%\draw[style=dashed] [<->=angle90](-0.53,2.72)--(-0.53,3.75);
	%\draw[style=dashed] [<->=angle90](-0.73,2.72)--(-0.73,3.75);
	%\draw node at(-1.3,3.86){-n+1} (-0.6,3.86)circle[radius=0.15];
	%\draw  (-0.46,3.81)--(2.21,0.11);
	\draw  (-0.46,-0.38)--(2.1,0);
	%\draw  (-0.46,2.55)--(2.15,0.15);
	\draw  (-0.46,-2.25)--(2.16,-0.15);
	%\draw  (-0.46,-3.55)--(2.17,-0.15);
	\draw  (-0.46,1.29)--(2.16,0.15);
	\draw node{} node at(0.3,0){1} (0.6,0)circle[radius=0.15] (0.78,0)--(2,0);
	\draw node at(2.2,-0.46){2} (2.2,0)circle[radius=0.15] (2.4,0)--(3.6,0);
	\draw  node at(3.8,-0.46){3} (3.8,0)circle[radius=0.15](4,0)--(5.2,0);
	\draw  node at(5.53,0){$\cdots$} (5.8,0)--(6.98,0);
	\draw node{} node at(7.2,-0.46){n-2} (7.2,0)circle[radius=0.15] (7.42,0)--(8.58,0);
	\draw node at(8.8,-0.46){n-1} (8.8,0)circle[radius=0.15];
	\draw[->,>=left to] (8.92,0.12)--(10.3,0.12);
	\draw[->,>=right to] (8.92,-0.12)--(10.3,-0.12);
	\draw node at(10.4,-0.46){n} (10.4,0)circle[radius=0.15];
	\draw node{} node at(5.5,-2.86){For N=3};
	\end{tikzpicture}
\end{center}

(III).\, Type $C_n$, $(n>1)$:
\begin{center}
	\begin{tikzpicture}
	\draw node at(-0.6,-2.25){0} (-0.6,-1.8)circle[radius=0.15];
	\draw node at(-0.6,2.25){-2} (-0.6,1.8)circle[radius=0.15];
	\draw[style=dashed] [<->=angle90](-0.53,-1.68)--(-0.53,-0.16);
	\draw[style=dashed] [<->=angle90](-0.73,-1.68)--(-0.73,-0.16);
	\draw node at(-1.1,0){-1} (-0.6,0)circle[radius=0.15];
	\draw[style=dashed] [<->=angle90](-0.53,0.14)--(-0.53,1.68);
	\draw[style=dashed] [<->=angle90](-0.73,0.14)--(-0.73,1.68);
	\draw [style=dashed][<->=angle 90](-0.73,-1.88)arc(269:92:1.86);
	\draw [style=dashed][<->=angle 90](-0.73,-1.72)arc(269:91:1.66);
	%\draw [densely dotted](-0.6,-0.15)--(-0.6,0.3);	
	%\draw[style=dashed] [<->=angle90](-0.53,0.31)--(-0.53,1.14);
	%\draw[style=dashed] [<->=angle90](-0.73,0.31)--(-0.73,1.14);
	%\draw[style=dashed] [<->=angle90](-0.53,1.35)--(-0.53,3.75);
	%\draw[style=dashed] [<->=angle90](-0.73,1.35)--(-0.73,3.75);
	%\draw node at(-1.3,3.86){-n+1} (-0.6,3.86)circle[radius=0.15];
	%\draw[->,>=left to]  (-0.43,3.89)--(1.05,0.16);
	%\draw [->,>=right to] (-0.48,3.76)--(0.99,0.13);
	\draw[->,>=left to]  (-0.47,0.06)--(0.86,0.06);
	\draw [->,>=right to] (-0.46,-0.06)--(0.86,-0.06);
	\draw[->,>=left to]   (-0.55,-1.7)--(0.85,-0.12);
	\draw [->,>=right to]  (-0.46,-1.8)--(1.0,-0.16);
	\draw[->,>=left to]  (-0.46,1.86)--(0.90,0.13);
	\draw[->,>=right to]  (-0.50,1.72)--(0.83,0.10);
	\draw node{} node at(1.0,-0.56){1} (1.0,0)circle[radius=0.15] (1.15,0)--(2,0);
	\draw node at(2.2,-0.56){2} (2.2,0)circle[radius=0.15] (2.4,0)--(3.35,0);
	\draw  node at(3.5,-0.56){3} (3.5,0)circle[radius=0.15](3.65,0)--(4.35,0);
	\draw  node at(4.65,0){$\cdots$} (4.98,0)--(5.68,0);
	\draw node{} node at(5.83,-0.56){n-2} (5.83,0)circle[radius=0.15] (5.95,0)--(6.9,0);
	\draw node at(7.05,-0.56){n-1} (7.05,0)circle[radius=0.15];
	\draw[<-] (7.17,0.12)--(8.3,0.12);
	\draw[<-] (7.17,-0.12)--(8.3,-0.12);
	\draw node at(8.45,-0.56){n} (8.45,0)circle[radius=0.15];
	\draw node{} node at(4.2,-2.86){For N=3};
	\end{tikzpicture}
\end{center}

(IV).\, Type $D_n$, $(n>3)$:
\begin{center}
	\begin{tikzpicture}
	%\draw node at(-1.3,-3.6){0} (-0.6,-3.6)circle[radius=0.15];
	\draw node at(-0.6,-2.7){0} (-0.6,-2.3)circle[radius=0.15];
	\draw node at(-0.6,1.65){-2} (-0.6,1.25)circle[radius=0.15];
	%\draw[style=dashed] [<->=angle90](-0.53,-3.48)--(-0.53,-2.41);
	%\draw[style=dashed] [<->=angle90](-0.73,-3.48)--(-0.73,-2.41);
	\draw node at(-1.2,-0.35){-1} (-0.6,-0.35)circle[radius=0.15];
	\draw[style=dashed] [<->=angle90](-0.53,-2.18)--(-0.53,-0.5);
	\draw[style=dashed] [<->=angle90](-0.73,-2.18)--(-0.73,-0.5);
	\draw[style=dashed] [<->=angle90](-0.53,-0.2)--(-0.53,1.16);
	\draw[style=dashed] [<->=angle90](-0.73,-0.2)--(-0.73,1.16);
	\draw [style=dashed][<->=angle 90](-0.73,-2.4)arc(269:91:1.86);
	\draw [style=dashed][<->=angle 90](-0.73,-2.2)arc(269:91:1.66);
	%\draw [densely dotted](-0.6,-0.15)--(-0.6,0.3);	
	%\draw[style=dashed] [<->=angle90](-0.53,0.31)--(-0.53,1.14);
	%\draw[style=dashed] [<->=angle90](-0.73,0.31)--(-0.73,1.14);
	%\draw[style=dashed] [<->=angle90](-0.53,1.35)--(-0.53,2.45);
	%\draw[style=dashed] [<->=angle90](-0.73,1.35)--(-0.73,2.45);
	%\draw node at(-1.3,2.6){-n+1} (-0.6,2.6)circle[radius=0.15];
	%\draw[style=dashed] [<->=angle90](-0.53,2.72)--(-0.53,3.75);
	%\draw[style=dashed] [<->=angle90](-0.73,2.72)--(-0.73,3.75);
	%\draw node at(-1.3,3.86){-n+1} (-0.6,3.86)circle[radius=0.15];
	%\draw  (-0.46,3.81)--(2.21,0.11);
	\draw  (-0.46,-0.38)--(2.1,0);
	%\draw  (-0.46,2.55)--(2.15,0.15);
	\draw  (-0.46,-2.25)--(2.16,-0.15);
	%\draw  (-0.46,-3.55)--(2.17,-0.15);
	\draw  (-0.46,1.29)--(2.16,0.15);
	\draw node{} node at(0.3,0){1} (0.6,0)circle[radius=0.15] (0.78,0)--(2,0);
	\draw node at(2.2,-0.46){2} (2.2,0)circle[radius=0.15] (2.4,0)--(3.6,0);
	\draw  node at(3.8,-0.46){3} (3.8,0)circle[radius=0.15](4,0)--(5.2,0);
	\draw  node at(5.53,0){$\cdots$} (5.8,0)--(6.98,0);
	\draw node{} node at(7.2,-0.46){n-3} (7.2,0)circle[radius=0.15] (7.42,0)--(8.58,0);
	\draw node at(8.8,-0.46){n-2} (8.8,0)circle[radius=0.15];
	\draw (8.92,0.12)--(10.3,1.75);
	\draw (8.92,-0.12)--(10.3,-1.35);
	\draw node at(10.4,2.5){n-1} (10.4,1.85)circle[radius=0.15];
	\draw node at(10.4,-1.9){n} (10.4,-1.4)circle[radius=0.15];
	\draw node{} node at(5.5,-2.86){For N=3};
	\end{tikzpicture}
\end{center}

(V).\, Type $E_6$ and $\tilde{J}=\{-N+1,\cdots, -1, 0, 1, \cdots, 6\}$:
\begin{center}
	\begin{tikzpicture}
	\draw node at(3.78,2.9){0} (3.78,3.3)circle[radius=0.15];
	\draw node at(5.33,2.9){-1} (5.53,3.3)circle[radius=0.15];
	\draw node at(7.38,2.9){-2} (7.28,3.3)circle[radius=0.15];
	%\draw node at(8.22,3.8){-n+1} (8.22,3.3)circle[radius=0.15];
	\draw[style=dashed] [<->=angle 90](3.9,3.4)--(5.39,3.4);
	\draw[style=dashed] [<->=angle 90](3.9,3.2)--(5.39,3.2);
	\draw[style=dashed] [<->=angle 90](5.69,3.2)--(7.14,3.2);
	\draw[style=dashed] [<->=angle 90](5.69,3.4)--(7.14,3.4);
	\draw [style=dashed][<->=angle 90](3.68,3.39)arc(180:2:1.84);
	\draw [style=dashed][<->=angle 90](3.83,3.39)arc(180:2:1.68);
	\draw  (5.51,1.59)--(3.78,3.17);
	\draw  (5.53,1.62)--(5.53,3.17);
	\draw  (5.58,1.62)--(7.28,3.17);
	%\draw  (5.61,1.59)--(8.22,3.17);
	\draw node at(2.2,-0.56){1} (2.2,0)circle[radius=0.15] (2.4,0)--(3.6,0);
	\draw  node at(3.8,-0.56){3} (3.8,0)circle[radius=0.15](4,0)--(5.35,0);
	\draw  node at(5.53,-0.56){4} (5.53,0)circle[radius=0.15]
	(5.8,0)--(6.98,0);
	\draw node at(5.13,1.5){2} (5.53,1.5)circle[radius=0.15];
	\draw(5.53,0.14)--(5.53,1.35);
	\draw node{} node at(7.2,-0.56){5} (7.2,0)circle[radius=0.15] (7.42,0)--(8.58,0);
	\draw node at(8.8,-0.56){6} (8.8,0)circle[radius=0.15];
	\draw node{} node at(5.5,-1.86){For N=3};
	\end{tikzpicture}
\end{center}

(VI).\, Type $E_7$ and $\tilde{J}=\{-N+1,\cdots, -1, 0, 1, \cdots, 7\}$:
\begin{center}
	\begin{tikzpicture}
	%\draw node at(-1.3,-3.6){0} (-0.6,-3.6)circle[radius=0.15];
	\draw node at(-0.6,-1.65){0} (-0.6,-1.25)circle[radius=0.15];
	\draw node at(-0.6,1.65){-2} (-0.6,1.25)circle[radius=0.15];
	%\draw[style=dashed] [<->=angle90](-0.53,-3.48)--(-0.53,-2.41);
	%\draw[style=dashed] [<->=angle90](-0.73,-3.48)--(-0.73,-2.41);
	\draw node at(-1.3,0){-1} (-0.6,0)circle[radius=0.15];
	%\draw[style=dashed] [<->=angle90](-0.53,-2.18)--(-0.53,-1.16);
	%\draw[style=dashed] [<->=angle90](-0.73,-2.18)--(-0.73,-1.16);
	\draw[style=dashed] [<->=angle90](-0.53,-1.14)--(-0.53,-0.16);
	\draw[style=dashed] [<->=angle90](-0.73,-1.14)--(-0.73,-0.16);
	%\draw [densely dotted](-0.6,-0.15)--(-0.6,0.3);	
	\draw[style=dashed] [<->=angle90](-0.53,0.16)--(-0.53,1.14);
	\draw[style=dashed] [<->=angle90](-0.73,0.16)--(-0.73,1.14);
	\draw [style=dashed][<->=angle 90](-0.71,-1.18)arc(270:92:1.18);
	\draw [style=dashed][<->=angle 90](-0.71,-1.32)arc(270:92:1.32);
	%\draw[style=dashed] [<->=angle90](-0.53,1.35)--(-0.53,2.45);
	%\draw[style=dashed] [<->=angle90](-0.73,1.35)--(-0.73,2.45);
	%\draw node at(-1.3,2.6){-n+1} (-0.6,2.6)circle[radius=0.15];
	%\draw[style=dashed] [<->=angle90](-0.53,2.72)--(-0.53,3.75);
	%\draw[style=dashed] [<->=angle90](-0.73,2.72)--(-0.73,3.75);
	%\draw node at(-1.3,3.86){-n} (-0.6,3.86)circle[radius=0.15];
	%\draw  (-0.46,3.81)--(2.21,0.11);
	\draw  (-0.46,0)--(2.07,0);
	%\draw  (-0.46,2.55)--(2.15,0.15);
	\draw  (-0.46,-1.25)--(2.16,-0.15);
	%\draw  (-0.46,-3.55)--(2.17,-0.15);
	\draw  (-0.46,1.29)--(2.06,0.12);
	%\draw node{} node at(0.6,-0.56){1} (0.6,0)circle[radius=0.15] %(0.78,0)--(2,0);
	\draw node at(2.2,-0.56){1} (2.2,0)circle[radius=0.15] (2.4,0)--(3.6,0);
	\draw  node at(3.8,-0.56){3} (3.8,0)circle[radius=0.15](4,0)--(5.2,0);
	\draw  node at(5.35,-0.56){4} (5.35,0)circle[radius=0.15](5.5,0)--(6.7,0);
	\draw  node at(5.35,2.15){2} (5.35,1.75)circle[radius=0.15](5.35,0.16)--(5.35,1.6);
	\draw node at(6.85,-0.56){5} (6.85,0)circle[radius=0.15];
	\draw (6.96,0)--(8.3,0);
	\draw  node at(8.45,-0.56){6} (8.45,0)circle[radius=0.15](8.6,0)--(9.8,0);
	\draw node at(9.98,-0.56){7} (9.98,0)circle[radius=0.15];
	\draw node{} node at(5.25,-1.86){For N=3};
	\end{tikzpicture}
\end{center}

(VII).\, Type $E_8$ and $\tilde{J}=\{-N+1,\cdots, -1, 0, 1, \cdots, 8\}$:
\begin{center}
	\begin{tikzpicture}
	\draw node at(12.7,-1.65){0} (12.7,-1.25)circle[radius=0.15];
	\draw node at(12.7,1.65){-2} (12.7,1.25)circle[radius=0.15];
	\draw node at(13.2,0){-1} (12.7,0)circle[radius=0.15];
	\draw[style=dashed] [<->=angle90](12.77,-1.11)--(12.77,-0.16);
	\draw[style=dashed] [<->=angle90](12.57,-1.11)--(12.57,-0.16);
	%\draw[style=dashed] [<->=angle90](12.77,-0.89)--(12.77,-0.16);
	%\draw[style=dashed] [<->=angle90](12.57,-0.89)--(12.57,-0.16);
	%\draw [densely dotted](12.7,-0.15)--(12.7,0.3);	
	\draw[style=dashed] [<->=angle90](12.77,0.16)--(12.77,1.14);
	\draw[style=dashed] [<->=angle90](12.57,0.16)--(12.57,1.14);
	%\draw[style=dashed] [<->=angle90](12.77,1.35)--(12.77,2.45);
	%\draw[style=dashed] [<->=angle90](12.57,1.35)--(12.57,2.45);
	%\draw node at(13.4,2.6){-n+1} (12.7,2.6)circle[radius=0.15];
	\draw [style=dashed][<->=angle 90](12.85,-1.18)arc(-90:89:1.18);
	\draw [style=dashed][<->=angle 90](12.82,-1.32)arc(-90:89:1.32);
	\draw  (12.59,0)--(11.61,0);
	%\draw  (12.59,2.55)--(11.61,0.15);
	\draw  (12.59,-1.25)--(11.61,-0.12);
	\draw  (12.59,1.25)--(11.61,0.12);
	\draw node at(2.2,-0.56){1} (2.2,0)circle[radius=0.15] (2.4,0)--(3.65,0);
	\draw  node at(3.8,-0.56){3} (3.8,0)circle[radius=0.15](4,0)--(5.2,0);
	\draw  node at(5.35,-0.56){4} (5.35,0)circle[radius=0.15](5.5,0)--(6.7,0);
	\draw  node at(5.35,2.15){2} (5.35,1.75)circle[radius=0.15](5.35,0.16)--(5.35,1.6);
	\draw node at(6.85,-0.56){5} (6.85,0)circle[radius=0.15];
	\draw (6.96,0)--(8.3,0);
	\draw  node at(8.45,-0.56){6} (8.45,0)circle[radius=0.15](8.6,0)--(9.8,0);
	\draw node at(9.98,-0.56){7} (9.98,0)circle[radius=0.15](10.15,0)--(11.35,0);
	\draw node at(11.5,-0.56){8} (11.5,0)circle[radius=0.15];
	\draw node{} node at(7.8,-1.86){For N=3};
	\end{tikzpicture}
\end{center}

(VIII).\, Type $F_4$  and $\tilde{J}=\{-N+1,\cdots, -1, 0, 1, 3, 4\}$:
\begin{center}
	\begin{tikzpicture}
	%\draw node at(-1.3,-3.6){0} (-0.6,-3.6)circle[radius=0.15];
	\draw node at(-0.6,-1.65){0} (-0.6,-1.25)circle[radius=0.15];
	\draw node at(-0.6,1.65){-2} (-0.6,1.25)circle[radius=0.15];
	%\draw[style=dashed] [<->=angle90](-0.53,-3.48)--(-0.53,-2.41);
	%\draw[style=dashed] [<->=angle90](-0.73,-3.48)--(-0.73,-2.41);
	\draw node at(-1.3,0){-1} (-0.6,0)circle[radius=0.15];
	%\draw[style=dashed] [<->=angle90](-0.53,-2.18)--(-0.53,-1.16);
	%\draw[style=dashed] [<->=angle90](-0.73,-2.18)--(-0.73,-1.16);
	\draw[style=dashed] [<->=angle90](-0.53,-1.14)--(-0.53,-0.16);
	\draw[style=dashed] [<->=angle90](-0.73,-1.14)--(-0.73,-0.16);
	%\draw [densely dotted](-0.6,-0.15)--(-0.6,0.3);	
	\draw[style=dashed] [<->=angle90](-0.53,0.16)--(-0.53,1.14);
	\draw[style=dashed] [<->=angle90](-0.73,0.16)--(-0.73,1.14);
	\draw [style=dashed][<->=angle 90](-0.71,-1.18)arc(270:92:1.18);
	\draw [style=dashed][<->=angle 90](-0.71,-1.32)arc(270:92:1.32);
	%\draw[style=dashed] [<->=angle90](-0.53,1.35)--(-0.53,2.45);
	%\draw[style=dashed] [<->=angle90](-0.73,1.35)--(-0.73,2.45);
	%\draw node at(-1.3,2.6){-n+1} (-0.6,2.6)circle[radius=0.15];
	%\draw[style=dashed] [<->=angle90](-0.53,2.72)--(-0.53,3.75);
	%\draw[style=dashed] [<->=angle90](-0.73,2.72)--(-0.73,3.75);
	%\draw node at(-1.3,3.86){-n} (-0.6,3.86)circle[radius=0.15];
	%\draw  (-0.46,3.81)--(2.21,0.11);
	\draw  (-0.46,0)--(2.07,0);
	%\draw  (-0.46,2.55)--(2.15,0.15);
	\draw  (-0.46,-1.25)--(2.16,-0.15);
	%\draw  (-0.46,-3.55)--(2.17,-0.15);
	\draw  (-0.46,1.29)--(2.06,0.12);
	%\draw node{} node at(0.6,-0.56){1} (0.6,0)circle[radius=0.15] %(0.78,0)--(2,0);
	\draw node at(2.2,-0.56){1} (2.2,0)circle[radius=0.15] (2.4,0)--(3.65,0);
	\draw  node at(3.8,-0.56){2} (3.8,0)circle[radius=0.15];
	\draw[->,>=left to] (3.95,0.12)--(5.25,0.12);
	\draw[->,>=right to](3.95,-0.12)--(5.25,-0.12);
	\draw  node at(5.35,-0.56){3} (5.35,0)circle[radius=0.15](5.5,0)--(6.7,0);
	\draw node at(6.85,-0.56){4} (6.85,0)circle[radius=0.15];
	\draw node{} node at(3.2,-1.86){For N=3};
	\end{tikzpicture}
\end{center}

(IX).\, Type $G_2$ and $\tilde{J}=\{-N+1,\cdots, -1, 0, 1, 2\}$:
\begin{center}
	\begin{tikzpicture}
	%\draw node at(-1.3,-3.6){0} (-0.6,-3.6)circle[radius=0.15];
	\draw node at(-0.6,-1.65){0} (-0.6,-1.25)circle[radius=0.15];
	\draw node at(-0.6,1.65){-2} (-0.6,1.25)circle[radius=0.15];
	%\draw[style=dashed] [<->=angle90](-0.53,-3.48)--(-0.53,-2.41);
	%\draw[style=dashed] [<->=angle90](-0.73,-3.48)--(-0.73,-2.41);
	\draw node at(-1.3,0){-1} (-0.6,0)circle[radius=0.15];
	%\draw[style=dashed] [<->=angle90](-0.53,-2.18)--(-0.53,-1.16);
	%\draw[style=dashed] [<->=angle90](-0.73,-2.18)--(-0.73,-1.16);
	\draw[style=dashed] [<->=angle90](-0.53,-1.14)--(-0.53,-0.16);
	\draw[style=dashed] [<->=angle90](-0.73,-1.14)--(-0.73,-0.16);
	%\draw [densely dotted](-0.6,-0.15)--(-0.6,0.3);	
	\draw[style=dashed] [<->=angle90](-0.53,0.16)--(-0.53,1.14);
	\draw[style=dashed] [<->=angle90](-0.73,0.16)--(-0.73,1.14);
	\draw [style=dashed][<->=angle 90](-0.71,-1.18)arc(270:92:1.18);
	\draw [style=dashed][<->=angle 90](-0.71,-1.32)arc(270:92:1.32);
	%\draw[style=dashed] [<->=angle90](-0.53,1.35)--(-0.53,2.45);
	%\draw[style=dashed] [<->=angle90](-0.73,1.35)--(-0.73,2.45);
	%\draw node at(-1.3,2.6){-n+1} (-0.6,2.6)circle[radius=0.15];
	%\draw[style=dashed] [<->=angle90](-0.53,2.72)--(-0.53,3.75);
	%\draw[style=dashed] [<->=angle90](-0.73,2.72)--(-0.73,3.75);
	%\draw node at(-1.3,3.86){-n} (-0.6,3.86)circle[radius=0.15];
	%\draw  (-0.46,3.81)--(2.21,0.11);
	\draw  (-0.46,0)--(2.07,0);
	%\draw  (-0.46,2.55)--(2.15,0.15);
	\draw  (-0.46,-1.25)--(2.16,-0.15);
	%\draw  (-0.46,-3.55)--(2.17,-0.15);
	\draw  (-0.46,1.29)--(2.06,0.12);
	%\draw node{} node at(0.6,-0.56){1} (0.6,0)circle[radius=0.15] %(0.78,0)--(2,0);
	\draw node at(2.2,-0.56){1} (2.2,0)circle[radius=0.15];
	\draw  node at(3.8,-0.56){2} (3.8,0)circle[radius=0.15];
	\draw[->] (2.35,0.12)--(3.68,0.12);
	\draw[->] (2.35,0)--(3.68,0);
	\draw[->] (2.35,-0.12)--(3.68,-0.12);
	\draw node{} node at(2,-1.86){For N=3};
	\end{tikzpicture}
\end{center}

\medskip

\vskip30pt \centerline{\bf ACKNOWLEDGMENT}
Y. Gao would
like to thank the support of NSERC of Canada and  NSFC grant  11931009. N. Jing would like to thank the support of
Simons Foundation grant 523868 and NSFC grants 11531004 and 12171303.
L. Xia would
like to thank the support of NSFC grants 11871249 and 12171155.  H. Zhang would
like to thank the support of NSFC grant 11871325.
\bigskip

\bigskip

\noindent{\bf Statement on Conflict of interest}: The authors declare that they have no conflict of interest.

\bibliographystyle{amsalpha}

\begin{thebibliography}{ABC9}

\medskip

\bibitem[ABFP]{ABFP} B. Allison, S. Berman, J. Faulkner and A. Pianzola, \textit{ Multiloop realization of extended affine Lie algebras and Lie tori}, Trans. Amer. Math. Soc. \textbf{361} (2009), 4807-4842.

\bibitem[BM]{BM} S. Berman and R.V. Moody, \textit{ Lie algebras graded by finite root systems and the intersection matrix algebras of Slowdowy}, Invent. Math. \textbf{108} (1992), 323--347.


%\bibitem[D]{D} V.~G.~Drinfeld, \textit{A new realization of Yangians and
%quantized affine algebras}, Soviet Math. Dokl. \textbf{36} (1988),
%212--216.

\bibitem[FJM1]{FJM1} B. Feigin, M. Jimbo, T. Miwa and E. Mukhin, \textit{Branching rules for quantum toroidal $\mathfrak{gl}(n)$},
Adv. Math. \textbf{300} (2016), 229-274. %arxiv:1309.2147.

\bibitem[FJM2]{FJM2} B. Feigin, M. Jimbo, T. Miwa and E. Mukhin, \textit{Representations of quantum toroidal $\mathfrak{gl}_n$},
J. Algebra \textbf{380} (2013), 78--108. %arXiv:1204.5378.

\bibitem[G]{G} Y.~Gao,
\textit{Involutive Lie algebras graded by finite root systems and compact forms of IM algebras},
Math. Zeitschrift \textbf{223} (1996), 651--672.

\bibitem[GHX]{GHX} Y.~Gao, N. ~Hu and L. Xia,
\textit{Quantized GIM algebras and their images in quantized Kac-Moody algebras}, Algebra Represent. Theory \textbf{24} (2021), 565--584.

\bibitem[GJ]{GJ} Y.~Gao and N. Jing,
\textit{$U_q(\mathfrak{gl}_N)$ action on $\mathfrak{gl}_N$-modules and quantum toroidal algebras}, J. Algebra \textbf{273} (2004), %no. 1,
320--343.

\bibitem[GTL]{GTL} S.~Gautam and V.~Toledano-Laredo,
\textit{Yangians and quantum loop algebras},
Selecta Math. (N.S.) \textbf{19} (2013), 271--336.

\bibitem[GKV]{GKV} V.~Ginzburg, M.~Kapranov and E.~Vasserot, \textit{Langlands reciprocty for algebric surfaces,}  Math. Res. Lett. \textbf{2} (1995), 147--160.

\bibitem[GM]{GM} N. Guay and X. Ma, \textit{From quantum loop algebras to Yangians},
J. Lond. Math. Soc. (2) \textbf{86} (2012), 683--700.

\bibitem[GNW]{GNW} N. Guay, H. Nakajima and C. Wendlandt, \textit{Coproduct for the Yangian of an affine Kac-Moody algebra},
 Adv. Math. \textbf{338} (2018), 865--911. %arXiv:1701.05288.



\bibitem[FJ]{FJ} I.~B.~Frenkel and N. Jing, \textit{Vertex representations of quantum affine algebras}, Proc. Nat'l. Acad. Sci. USA. \textbf{85} (1998), 9373--9377.

\bibitem[FJW]{FJW} I.~B.~Frenkel, N.~Jing and W.~Wang,
 \textit{Quantum vertex representations via finite groups and the McKay correspondence}, Comm. Math. Phys. \textbf{211} (2000), 365--393.

\bibitem[H1]{H1} D.~Hernandez, \textit{Drinfeld coproduct, quantum fusion tensor category and applications}, Proc. London Math. Soc. \textbf{95}
(2007), 567--608.

\bibitem[H2]{H2} D.~Hernandez, \textit{Quantum toroidal algebras and their representations},
Selecta Math. (N.S.) \textbf{14} (2009), 701--725.

\bibitem[J1]{J1} N.~Jing, \textit{On Drinfel'd realization of quantum
affine algebras},  Ohio State Univ. Math. Res. Inst. Publ. de
Gruyter, Berlin, \textbf{7}, 1998, pp.195--206.

\bibitem[J2]{J2} N. Jing, \textit{Quantum Kac-Moody algebras and vertex representations},
Lett. Math. Phys. \textbf{4} (1998), 261--271.

\bibitem[JZ1]{JZ1} N. Jing and H. Zhang,  \textit{Hopf algebraic structure of quantum toroidal algebra}, arXiv:1604.05416 .


\bibitem[JZ2]{JZ2} N. Jing and H. Zhang, \textit{Two-parameter twisted
quantum affine algebras}, J. Math. Phys. \textbf{57} (2016), 091702.

\bibitem[K]{K} S. Kolb, \textit{Quantum symmetric Kac-Moody pairs}, Adv. Math. \textbf{267} (2014), 395-469.

\bibitem[LT]{LT}  R. Lv and Y. ~Tan, \textit{On quantized generalized intersection matrix algebras associated to 2-fold affinization of Cartan matrices}, J. Algebra Appl. \textbf{12} (2013), 125--141.


\bibitem[M1]{M1} K. Miki, \textit{ Toroidal and level 0 $U_q'(\widehat{sl}_{n+1})$ actions on $U_q(\widehat{gl}_{n+1})$-modules} J. Math. Phys. \textbf{40} (1999), 3191--3210.

\bibitem[M2]{M2} K. Miki, \textit{Toroidal  braid  group  action  and  an  automorphism  of  toroidal algebra $U_q(sl_{n+1},tor)$}, Lett.Math. Phys. \textbf{47} (1999), 365--378.


\bibitem[M3]{M3} K. Miki, \textit{Representations of quantum toroidal algebra $U_q(sl_{n+1},tor)(n>2)$}, J. Math. Phys. \textbf{41} (2000), 7079--7098.

\bibitem[M4]{M4} K. Miki, \textit{Quantum toroidal algebra $U_q(sl_2,tor)$ and R matrices}, J. Math. Phys. \textbf{42} (2001), 2293--2308.

\bibitem[M5]{M5} K. Miki, \textit{Some quotient algebras arising from the quantum toroidal algebra
$U_q(sl_2(C_{\gamma}))$}. Osaka J. Math. \textbf{42} (2005), 885--929.

\bibitem[M6]{M6}  K. Miki,  \textit{Some quotient algebras arising from the quantum toroidal algebra
$U_q(sl_{n+1}(C_{\gamma}))$ $(n \geq 2)$}. Osaka J. Math. \textbf{43} (2006), 895--922.

\bibitem[Na1]{Na} H. Nakajima, \textit{Quiver varieties and quantum affine algebras}, translation of Sugaku \textbf{52} (2000), 337--359;
Sugaku Expositions \textbf{19} (2006), 53--78.

\bibitem[Na2]{Na2} H. Nakajima, \textit{Quiver varieties and finite-dimensional representations of quantum
affine algebras}, J. Amer. Math. Soc. \textbf{14} (2001), 145--238.


\bibitem[Ne]{Ne} E.~Neher,
\textit{Lie algebras graded by 3-graded root systems and Jordan pairs covered by grids},
Amer. J. Math. \textbf{118} (1996), 439--491.





\bibitem[RM]{RM} S. ~Rao and R. ~Moody, \textit{Vertex representations for $N$-toroidal Lie algebras and a generalization of the Virasoro algebra}, Comm. Math. Phys.,  \textbf{159} (1994), 239--264.

\bibitem[Sk]{Sk} K. ~Saito, \textit{Extended affine root systems. I. Coxeter transformations}, Publ. Res. Inst. Math. Sci. \textbf{21} (1985), 75--179.


\bibitem[Sy]{Sy} Y. ~Saito, \textit{Quantum toroidal algebras and their vertex representations}, Publ. RIMS. Kyoto Univ.  \textbf{34} (1998), 155--177.

\bibitem[STU]{STU}Y. ~Saito, K. ~Takemura and D. ~Uglov, \textit{Toroidal  actions  on  level1modules  of $U_q(\hat{sl}_n)$}, Transform. Groups \textbf{3} (1998), 75--102.

\bibitem[Sl]{Sl} P. ~Slodowy, %\textit{Beyond Kac-Moody algebras and inside}, Can. Math. Soc. Conf. Proc. \textbf{5}, (1986), 361--371.
\textit{Beyond Kac-Moody algebras, and inside}, Lie algebras and related topics (Windsor, Ont., 1984), pp. 361--371,
CMS Conf. Proc., \textbf{5}, Amer. Math. Soc., Providence, RI, 1986.

\bibitem[T1]{T1}  Y. ~Tan, \textit{Quantized GIM algebras and their Lusztig symmetries}, J. Algebra \textbf{289} (2005), 214--276.


\bibitem[T2]{T2}  Y. ~Tan, \textit{Drinfeld-Jimbo coproduct of quantized GIM
Lie algebras }, J. Algebra \textbf{313} (2007), 617--641.

\bibitem[VV1]{VV1}
 M. Varagnolo and E. Vasserot, \textit{Schur duality in the toroidal setting}, Comm. Math. Phys. \textbf{182} (1996), 469--484.

\bibitem[VV2]{VV2}
M. Varagnolo and E. Vasserot, \textit{Double-loop algebras and the Fock space}, Invent. Math. \textbf{133} (1998), 133--159.
	

\bibitem[X]{X}
L. Xia, \textit{Finite dimensional modules over quantum toroidal algebras}, Front. Math. China \textbf{15} (2020), 593--600.

\end{thebibliography}

\end{document}